\newcommand{\myTransformation}[2]
{
    \pgftransformcm{1}{0}{0}{1}{\pgfpoint{#1cm}{#2cm}}
}
\newcommand{\myGlobalTransformation}[2]
{
  \pgftransformcm{1}{0}{0.4}{0.5}{\pgfpoint{#1cm}{#2cm}}
}
\newcommand{\gridThreeD}[4]
{
  \begin{scope}
    \myGlobalTransformation{#1}{#2};
    \draw[white,line width=3pt,opacity=1.0,step=#4cm] grid (8,8);
    \draw [#3,step=#4cm] grid (8,8);
  \end{scope}
}
\tikzstyle myBG=[line width=3pt,opacity=1.0]
\newcommand{\drawLinewithBG}[2]
{
  \draw[white,myBG]  (#1) -- (#2);
  \draw[black,very thick] (#1) -- (#2);
}
\numberwithin{equation}{section}
\theoremstyle{plain}
\newtheorem{theorem}{Theorem}[]
\newtheorem{Retheorem}{Theorem}[]
\newtheorem{proposition}[theorem]{Proposition}
\newtheorem{lemma}[theorem]{Lemma}
\theoremstyle{remark}
\newtheorem{remark}[theorem]{Remark}
\newtheorem{remarks}[theorem]{Remarks}
\theoremstyle{definition}
\numberwithin{equation}{section}
\newcommand{\PP}{\mathbb{P}}
\newcommand{\EE}{\mathbb{E}}
\newcommand{\QQ}{\mathcal{Q}}
\newcommand{\R}{\mathbb{R}}
\newcommand{\N}{\mathbb{N}}
\newcommand{\eps}{\varepsilon}
\DeclareMathOperator{\dimh}{dim_H}
\DeclareMathOperator{\dist}{dist}
\DeclareMathOperator{\por}{por}
\DeclareMathOperator{\upor}{\overline{por}}
\DeclareMathOperator{\lpor}{\underline{por}}
\DeclareMathOperator{\interior}{int}
\newcommand{\restr}[1]{\lower3pt\hbox{$|_{#1}$}}
\begin{document}

\title{Fractal percolation, porosity, and dimension}

\author{Changhao Chen}
\author{Tuomo Ojala}
\author{Eino Rossi}
\author{Ville Suomala}
 \address{Department of Mathematical Sciences\\
University of Oulu\\
PO Box 3000\\
FI-90014 Oulu\\
Finland}
\email{changhao.chen@oulu.fi}
\email{ville.suomala@oulu.fi}

 \address{University of Jyvaskyla\\
Department of Mathematics and Statistics \\
          P.O. Box 35 (MaD) \\
          FI-40014 University of Jyvaskyla \\
          Finland}
          \email{tuomo.j.ojala@jyu.fi}
        \email{eino.rossi@jyu.fi}

\thanks{CC and ER acknowledge the support of the Vilho, Yrj\"o, and Kalle V\"ais\"al\"a foundation}
\subjclass[2000]{Primary 60J80; Secondary 28A80, 60D05.}
\keywords{fractal percolation, porosity, Galton-Watson process}
\date{\today}

\begin{abstract}
  We study the porosity properties of fractal percolation sets $E\subset\R^d$. Among other things, for all $0<\varepsilon<\tfrac12$, we obtain dimension bounds for  the set of exceptional points where the upper porosity of $E$ is less than $\tfrac12-\varepsilon$, or the lower porosity is larger than $\varepsilon$. Our method works also for inhomogeneous fractal percolation and more general random sets whose offspring distribution gives rise to a Galton-Watson process. 
\end{abstract}

\maketitle
\section{Introduction}
Let $A \subset \R^d$ and $x \in A$. The porosity of $A$ at $x$ on scale $r$ is 
\begin{align*}
  \por(A,x,r) = \sup\{\varrho\geq 0 \,:\,  \text{there is a ball } B(y,\varrho r) \subset B(x,r) \backslash A\}\,,
\end{align*}
where $B(x,r)$ denotes the closed ball with center $x$ and radius $r$.
The upper and lower porosities of $A$ at $x$ are defined respectively as
$$\upor(A,x)=\underset{r\rightarrow 0}{\limsup}\por(A,x,r) \text{ and }
\lpor(A,x)=\underset{r\rightarrow 0}{\liminf}\por(A,x,r).$$ 
In analysis and geometry, sets satisfying porosity conditions like $\upor(A,x)<\alpha<\tfrac12$ for all $x\in A$ are often considered as ``small exceptional sets''. The survey \cite{Zajicek1987} is a good source of information regarding the origins of the notion of porosity and on the early achievements. In geometric measure theory, problems relating various notions of porosity to dimension have been under intensive study, see e.g. \cite{Shmerkin2011} and the references therein. More genrally, the porosity of a set gives quantitative information about the local geometry of the set and thus, it is of interest to determine the porosity properties of a given set or a family of sets.

Our ultimate goal is to study the porosity properties of a well known family of random sets known as \emph{fractal percolation} or \emph{Mandelbrot percolation}. We start by informal description of  the model. Let $d\in\N$, $2\le M\in\N$ and $0<p<1$. Let $\mathcal{Q}_n$ denote the family of closed $M$-adic sub-cubes of $[0,1]^d$ of side-length $\ell(Q)=M^{-n}$. Let $E_0=[0,1]^d$.
To define $E_1$, we choose each cube $Q\in\mathcal{Q}_1$ with probability $p$ and remove it with probability $1-p$, all choices being independent of each other. Let $E_1$ be the union of the chosen cubes. For each of the chosen cubes, we continue inductively in the same manner by dividing them into $M^d$ cubes in $\mathcal{Q}_2$ and keeping each of these with probability $p$ and removing otherwise, with all the choices independent of each other and the previous step. The union of all the chosen cubes forms the set $E_2$. Each $E_n$, $n\ge 3$ is defined inductively in the same fashion. \emph{The fractal percolation limit set} is then defined as
\[E=\bigcap_{n\in\N}E_n\,.\]

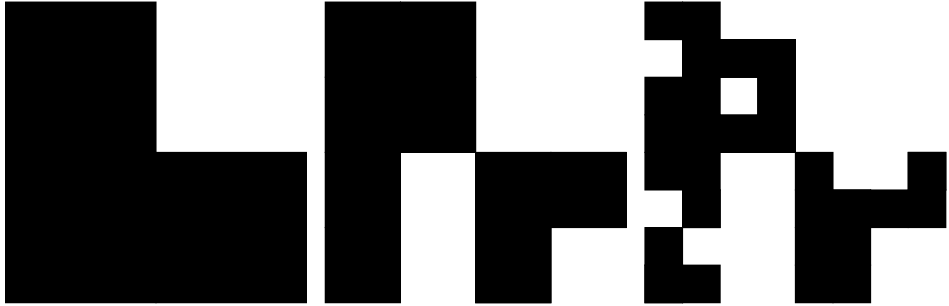
\begin{figure}
  \begin{centering}
    \begin{tikzpicture}[scale=.5]
  
      \begin{scope}
        \myTransformation{17}{0};
         \foreach \x in {3,1} {
            \foreach \y in {7,5} {
              \filldraw[black] (\x,\y)++(-1,-1) rectangle ++(2,2);
              \myTransformation{8.5}{0}
               \foreach \u in {0,1} {
                \foreach \o in {0,1} {
                 \pgfmathsetmacro{\rX}{random(0,1)}
                  \pgfmathsetmacro{\rY}{random(0,1)}
                    \filldraw [black] (\x+\rX,\y+\rY)++(-1,-1) rectangle ++(1,1);                 }
              }
            }
          }
          \foreach \x in {1} {
            \foreach \y in {3,1} {
              \filldraw[black] (\x,\y)++(-1,-1) rectangle ++(2,2);
              \myTransformation{8.5}{0}
              \foreach \u in {0,1} {
                \foreach \o in {0,1} {
                  \pgfmathsetmacro{\rX}{random(0,1)}
                  \pgfmathsetmacro{\rY}{random(0,1)}
                    \filldraw [black] (\x+\rX,\y+\rY)++(-1,-1) rectangle ++(1,1); 
                }
              }

            }
          }
          \foreach \x in {7,5} {
            \foreach \y in {3} {
              \filldraw[black] (\x,\y)++(-1,-1) rectangle ++(2,2);
              \filldraw[black] (5,1)++(-1,-1) rectangle ++(2,2);
              \myTransformation{8.5}{0}
              \foreach \u in {0,1} {
                \foreach \o in {0,1} {
                \pgfmathsetmacro{\rX}{random(0,1)}
                  \pgfmathsetmacro{\rY}{random(0,1)}
                    \filldraw [black] (\x+\rX,\y+\rY)++(-1,-1) rectangle ++(1,1);         
                  
                  }

              }

            }
        }
\myTransformation{8.5}{0}
        \foreach \u in {0,1} {
          \foreach \o in {0,1} {

            \pgfmathsetmacro{\rX}{random(0,1)}
            \pgfmathsetmacro{\rY}{random(0,1)}

            \filldraw[black] (5+\rX,1+\rY)++(-1,-1) rectangle ++(1,1);
          }}
      \end{scope}

      \begin{scope}
        \myTransformation{8.5}{0};
         \foreach \x in {2} {
            \foreach \y in {6,2} {
                \filldraw[black] (\x,\y)++(-2,-2) rectangle ++(4,4);
            }
        }
                \filldraw[black] (4,0) rectangle ++(4,4);

      \end{scope}

    \end{tikzpicture}
  \end{centering}
  \caption{The first three steps of the construction in a model where $d=2$, $M=2$.}
  \label{figure1}
\end{figure}

Obviously, this process crucially depends on the parameter $p$ (as well as $M,d$). For instance, it is well known that (see Section \ref{sub:GW} below) that $\mathbb{P}(E\neq\varnothing)>0$ if and only if $p>M^{-d}$ and in this case, 
conditioned on \emph{non-extinction} (that is $E\neq\varnothing$), the almost sure Hausdorff and box-counting dimension of $E$ equals 
\begin{equation}\label{eq:s}
  s=s(p,d,M):=d+\log p/\log M\,.
\end{equation} 
Further, there is a natural limit measure $\mu$ on $E$ defined as the weak limit of
\begin{equation}\label{eq:mudef}
  \mu_n=p^{-n}\mathcal{L}|_{E_n}\,,
\end{equation}
which exists and has exact dimension $s(p,d,M)$ a.s. on $E\neq\varnothing$.

The study of the porosity properties of fractal percolation was initiated by J\"arvenp\"a\"a, J\"arvenp\"a\"a and Mauldin \cite{Jetal2002}, who conjectured that almost surely, the upper and lower porosities should take the extremal values ($\tfrac12$ and $0$, respectively) at $\mu$-almost all points of $E$. This conjecture was later verified by Berlinkov and J\"arvenp\"a\"a \cite{BerlinkovJarvenpaa2003}, who also studied mean porosities and the porosities of the natural measure $\mu$. A related result saying that $E$ and its orthogonal projections have full Assouad dimension almost surely on non-extinction can be found in \cite{FraserMiaoTroscheit2015}. For the latter statement, it is enough that there is at least one point $x\in E$ with $\lpor(E,x)=0$.

In this paper, we use a different method via Galton-Watson branching processes to estimate the dimension of the exceptional sets, where the porosities take values other than $0$, $\tfrac12$.

The porosity of the graph of Brownian motion has been studied in \cite{CoxPhilip1991}, where an almost sure value for $\dim_H\{x\in E\,:\,\upor(E,x)=\alpha\}$ is calculated  for all $\alpha\le\tfrac12$ where $E$ is the graph of one-dimensional Brownian motion.
In particular, the results of \cite{CoxPhilip1991} indicate that the upper porosity level sets for Brownian graphs have a mutifractal structure.
Thus, our results for the upper porosity of the fractal percolation are analogous to the ones obtained in \cite{CoxPhilip1991} for the graph of Brownian motion. However in \cite{CoxPhilip1991}, the method is based on return times of the Brownian motion, whereas our results for fractal percolation limit sets rely on the properties of the underlying Galton-Watson process. Furthermore, we also consider the dimension of the sets where lower porosity takes values $\ge\varepsilon>0$.

Our main results are the following. Here $M$ and $d$ are fixed, $s=s(p,d,M)$ is as in \eqref{eq:s} and $\dim_H$ denotes Hausdorff dimension. 

\begin{theorem}
  \label{thm:E_upper_porous}
  For any $p > M^{-d}$, there exists $c=c(d,M,p)>0$, such that a.s. on non-extinction, $\underset{{x\in E}}{\inf}\upor(E,x)= c$.
\end{theorem}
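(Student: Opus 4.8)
The plan is to establish the two inequalities separately: a deterministic upper bound on $\inf_{x\in E}\upor(E,x)$ valid whenever $E\neq\varnothing$, and a matching lower bound holding almost surely on non-extinction for some explicit constant. For the upper bound, observe that $E$ is a subset of $[0,1]^d$ built from $M$-adic cubes, so at every point $x\in E$ and every scale $r=M^{-n}$, the ball $B(x,r)$ meets at most a bounded number $N=N(d,M)$ of cubes in $\QQ_n$; hence any empty ball inside $B(x,r)\setminus E$ has radius at most a fixed fraction of $r$, giving $\upor(E,x)\le\roo_0$ for an explicit $\roo_0=\roo_0(d,M)<\tfrac12$. This shows $\inf_{x\in E}\upor(E,x)\le\roo_0$ deterministically on $E\neq\varnothing$, and in particular the infimum is finite.

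For the lower bound I would argue that, a.s. on non-extinction, \emph{every} point of $E$ has a definite amount of porosity at \emph{every} small scale, uniformly. The idea is a coding/percolation argument: fix a small $\roo>0$. For a cube $Q\in\QQ_n$ surviving to generation $n$, consider its $M^d$ children; with probability at least $1-p^{k}$ (where $k$ counts a suitable sub-block of children) at least one ``interior'' sub-cube of $Q$ of relative size $\sim\roo$ is entirely removed at generation $n+c_0$ for some fixed $c_0$, which forces $\por(E,x,\ell(Q))\ge\roo$ for every $x\in E\cap Q$. These events, ranging over all surviving cubes, are \emph{not} independent, but one can extract an independent sub-family (e.g. by looking only at every $c_0$-th generation and at disjoint blocks of descendants) and compare with the underlying Galton--Watson process. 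Conditioned on non-extinction, the number of surviving cubes at generation $n$ grows like $(pM^d)^n$, so a first-moment / Borel--Cantelli estimate shows that the probability that some surviving cube at generation $n$ fails to have the porosity-$\roo$ property at all of generations $n,n+c_0,n+2c_0,\dots$ decays fast enough to be summable, provided $\roo$ is chosen small enough in terms of $d,M,p$. This yields $\upor(E,x)\ge\roo$ simultaneously for all $x\in E$, a.s. on non-extinction.

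Combining the two bounds, the random variable $I:=\inf_{x\in E}\upor(E,x)$ satisfies $\roo\le I\le\roo_0$ a.s. on non-extinction. To see that $I$ equals a \emph{deterministic} constant $c=c(d,M,p)$, I would invoke a $0$--$1$ law coming from the tree structure: $E$ decomposes as the union over surviving $Q\in\QQ_1$ of scaled independent copies of the construction, and $\inf_{x\in E}\upor(E,x)=\min_Q \inf_{x\in E\cap Q}\upor(E,x)$ where each term is (on its own non-extinction event) distributed as $I$. Hence if $F(t)=\PP(I\le t\mid E\neq\varnothing)$, a short computation with the branching recursion forces $F$ to take only the values $0$ and $1$, so $I$ is a.s.\ constant on non-extinction; call this constant $c$, and by the bounds above $0<\roo\le c\le\roo_0<\infty$.

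The main obstacle is the lower bound, specifically handling the dependence between the ``local porosity at scale $M^{-n}$'' events across the exponentially many surviving cubes. The crucial point is to make the per-cube failure probability small enough — by allowing a bounded number $c_0$ of extra generations to create the empty sub-cube and by choosing $\roo$ small — that, even after union-bounding over all $\sim(pM^d)^n$ surviving cubes and summing over $n$, the total is finite; this is where the quantitative relationship between $\roo$, $p$, $M$, and $d$ enters and why $c$ genuinely depends on $p$. Comparing the relevant survival events to the Galton--Watson process (so that "number of surviving cubes" can be controlled) is the technical heart, and is exactly where the paper's Section~\ref{sub:GW} machinery should be brought to bear.
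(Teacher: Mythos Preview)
Your upper-bound paragraph contains a genuine error. You claim that because $B(x,M^{-n})$ meets only boundedly many cubes of $\QQ_n$, any empty ball inside $B(x,r)\setminus E$ has radius at most a fixed fraction of $r$, forcing $\upor(E,x)\le\roo_0<\tfrac12$ for every $x\in E$. This is false: those finitely many level-$n$ cubes need not intersect $E$ at all, so the hole may well have relative radius arbitrarily close to $\tfrac12$. In fact the paper (and \cite{BerlinkovJarvenpaa2003}) records that $\upor(E,x)=\tfrac12$ at $\mu$-almost every $x$, so no such uniform bound below $\tfrac12$ can exist. Moreover, no deterministic upper bound on $I=\inf_{x\in E}\upor(E,x)$ is needed for the theorem: one only has to show $I>0$ a.s.\ and that $I$ is a.s.\ constant. (The strict inequality $c<\tfrac12$ is a separate statement, obtained only later via Theorem~\ref{thm:upor_lowerbound}.)

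Your lower-bound sketch also drifts: the sentence ``every point of $E$ has a definite amount of porosity at \emph{every} small scale'' describes $\lpor$, not $\upor$, and is not what you want (or what is true). The subsequent Borel--Cantelli/first-moment idea is salvageable and is in fact close in spirit to what the paper does, but the paper packages it more cleanly. Fix $N$ and keep a cube $Q\in\QQ_{nN}$ in an auxiliary set $F_n$ iff \emph{all} $M^{dN}$ sub-cubes $Q'\prec_N Q$ are surviving; then $\#F_n$ is a Galton--Watson process with offspring $L=M^{dN}\mathbf{1}[\widetilde L=M^{dN}]$, and for $N$ large $\EE(L)<1$, so $F=\bigcap_n F_n=\varnothing$ a.s. Since any $x$ with $\por(E,x,\sqrt{d}M^{-kN})<\tfrac12 d^{-1/2}M^{-N}$ for all large $k$ must lie in $F$, this gives the uniform lower bound $\upor(E,x)\ge\tfrac12 d^{-1/2}M^{-N}$. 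Your union bound over $\sim(pM^d)^n$ surviving cubes is essentially the first-moment extinction criterion for this same process, but you should state precisely which event you are bounding and why it contains the bad set.

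For the constancy, your self-similarity recursion is the right instinct, but the identity $\inf_{x\in E}\upor(E,x)=\min_{Q}\inf_{x\in E\cap Q}\upor(E\cap Q,x)$ needs care at points $x\in\partial Q$, where $\upor(E,x)$ may differ from $\upor(E\cap Q,x)$. The paper sidesteps this by invoking the zero--one law (Lemma~\ref{lemma:zero-one}) directly for the admissible property ``$\inf_{x\in E\cap Q}\upor(E\cap Q,x)<\alpha$''.
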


\begin{theorem}
  \label{thm:upor_dimbound}
  For any $p > M^{-d}$, $\varepsilon>0$ there exists $\delta =\delta(d,M,p,\varepsilon)>0$, such that a.s. 
  \[
    \dim_H\{x\in E: \upor(E,x) < 1/2-\varepsilon\} < s -\delta.
  \]
\end{theorem}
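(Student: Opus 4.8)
The plan is to show that the event ``$\upor(E,x)<1/2-\varepsilon$'' forces, at infinitely many scales near $x$, the percolation cube containing $x$ at some level $n$ to have an unusually dense configuration of surviving descendants: if all scales $r$ small enough admitted no relatively large hole near $x$, then in the $M$-adic cube $Q\ni x$ of level $n$, a definite fraction (depending on $\varepsilon$) of the sub-cubes at some bounded number of further levels must survive, in every ``direction'' around $x$. Quantitatively, fix $k=k(\varepsilon,M,d)$ so that $M^{-k}<\varepsilon/10$, say; if $E$ has a gap of relative size $\ge 1/2-\varepsilon$ near $x$ at scale $M^{-n}$, that gap contains an $M$-adic cube of level $n+k$; conversely, $\upor(E,x)<1/2-\varepsilon$ implies that for all large $n$, \emph{no} level-$(n+k)$ $M$-adic subcube of a neighbourhood of $x$ of size comparable to $M^{-n}$ is entirely removed from $E$. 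I would make this precise by covering $B(x,M^{-n})$ by boundedly many level-$n$ cubes and concluding that each of the $O(M^{kd})$ level-$(n+k)$ subcubes of each of these has at least one surviving level-$(n+k)$ descendant.

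Next I would encode this as a branching/large-deviations statement. Think of the level-$n$ cube $Q_n(x)$ containing $x$. The number of level-$(n+k)$ subcubes of $Q_n(x)$ that retain at least one survivor is, conditionally on $Q_n(x)\in E_n$, a sum of i.i.d.\ Bernoulli-type variables (one per level-$(n+k)$ subcube), each equal to $1$ with probability $q_k:=\PP(\text{percolation in }[0,1]^d\text{ survives }k\text{ steps})<1$. The porosity constraint, propagated through $k$ extra levels as above, demands that essentially \emph{all} $M^{kd}$ of these indicators equal $1$; call this event $G_k(Q_n(x))$, which has probability at most $\beta:=q_k^{M^{kd}/C}<1$ for a combinatorial constant $C=C(d)$. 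Since the constraint must hold at \emph{every} scale $n\ge n_0$ for some finite $n_0$, and since the events $G_k(Q_{n}(x))$ for $n$ in an arithmetic progression with step $k$ are independent (they concern disjoint generations of the tree), the set of $x$ for which $\upor(E,x)<1/2-\varepsilon$ is contained in a limsup-type set that is itself a subpercolation: at each of the levels $n_0, n_0+k, n_0+2k,\dots$ we keep a cube only if it survives \emph{and} its $G_k$ event occurs. This is again a fractal percolation (or Galton--Watson) construction, now with survival probability $p^{M^{kd}}$ per $k$ steps against the original $p^{kd}\cdot$(number of level-$k$ subcubes); comparing the expected offspring numbers, the Hausdorff dimension of the resulting limit set is at most $d+\log(p^{M^{kd}}\,N_k)/\log(M^k)$ for the appropriate count $N_k\le M^{kd}$, and a direct computation shows this is strictly less than $s$ by an amount $\delta=\delta(d,M,p,\varepsilon)>0$ because $M^{kd}>$ (the number of surviving cubes one typically needs) forces $p^{M^{kd}}\ll p^{(\text{typical count})}$.

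More carefully, the clean way to get the dimension drop is via a first-moment (expectation) bound: cover $\{x\in E:\upor(E,x)<1/2-\varepsilon\}$ by the level-$(n_0+mk)$ cubes that survive and along the way satisfy $G_k$ at each of the $m$ stages; the expected number of such cubes of side $M^{-(n_0+mk)}$ is at most $M^{n_0 d}\cdot\big(M^{kd}p^{M^{kd}}\big)^{m}$ up to constants (each $k$-step block contributes a factor $M^{kd}$ for the number of children times the probability $\le p^{M^{kd}}$ that the block is fully kept, since $G_k$ already forces every level-$(n+k)$ subcube nonempty, hence in particular forces many cubes to be literally present). A Borel--Cantelli / Markov inequality argument then shows $\dim_H\le \frac{\log(M^{kd}p^{M^{kd}})}{\log M^{k}} = d + \frac{M^{kd}\log p}{k\log M}$, and since $p<1$ and $M^{kd}/k\to\infty$, for $k$ large this is far below $s=d+\log p/\log M$; more to the point, already the smallest $k$ with $M^{-k}<\varepsilon/10$ gives $M^{kd}\ge 2$, hence a strict gap $\delta$ depending only on $d,M,p,\varepsilon$.

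The main obstacle I anticipate is the geometric bookkeeping in the first paragraph: turning the analytic statement ``$\por(E,x,r)<1/2-\varepsilon$ for all small $r$'' into a clean combinatorial statement about which $M$-adic subcubes of a fixed generation must be nonempty. One must handle the mismatch between arbitrary radii $r$ and the discrete scales $M^{-n}$, the fact that $B(x,r)$ straddles up to $2^d$ level-$n$ cubes, and the requirement that the forbidden hole be an honest ball rather than an $M$-adic cube; all of this costs constants that must be tracked to ensure $k$ (and hence $\delta$) depends only on $\varepsilon,M,d$. The remaining steps — the independence of $G_k$ events across disjoint generations and the expectation/Markov estimate for the dimension — are standard once the reduction is in place, essentially repeating the Galton--Watson dimension computation already recalled in the excerpt around \eqref{eq:s}.
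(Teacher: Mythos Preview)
The central geometric reduction in your first paragraph is backwards, and this breaks the argument. You assert that $\upor(E,x)<\tfrac12-\varepsilon$ forces, for all large $n$, \emph{every} level-$(n+k)$ subcube of a neighbourhood of $x$ at scale $M^{-n}$ to meet $E$. But a single missing level-$(n+k)$ cube only produces a hole of diameter $\sqrt{d}\,M^{-(n+k)}$, i.e.\ relative size $\sim M^{-k}\ll \tfrac12-\varepsilon$; it does \emph{not} create a hole of relative size $\ge\tfrac12-\varepsilon$. Your ``conversely'' is not the contrapositive of the preceding sentence: from ``a large hole contains a small missing cube'' you cannot deduce ``no large hole $\Rightarrow$ no small missing cube''. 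Consequently the set $F$ you build (all $M^{kd}$ subcubes present at every $k$-block) does \emph{not} contain $\{x:\upor(E,x)<\tfrac12-\varepsilon\}$; in fact, by the argument of Theorem~\ref{thm:E_upper_porous}, your $F$ is a.s.\ empty for large $k$, while Theorem~\ref{thm:upor_lowerbound} shows the exceptional set is nonempty. Your expectation bound $M^{kd}p^{M^{kd}}$ therefore bounds the wrong object.

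The correct geometric observation is almost the opposite extreme: if among the $M^{Nd}$ subcubes $Q'\prec_N Q$ only \emph{one} is surviving, then the rest of $Q$ is empty and any $x\in E\cap Q$ sees a hole of relative size $\ge\tfrac12-\sqrt{d}\,M^{-N}$ at scale $\tfrac12 M^{-nN}$. Choosing $N$ with $\sqrt{d}\,M^{-N}<\varepsilon$, the condition $\upor(E,x)<\tfrac12-\varepsilon$ therefore forces at least \emph{two} surviving subcubes at every $N$-block from some level on. The covering set $F$ is thus the GW subprocess with offspring $L=\widetilde{L}\,\mathbf{1}[\widetilde{L}\ge 2]$, whose mean $\EE(\widetilde{L})-\PP(\widetilde{L}=1)$ is strictly smaller than $\EE(\widetilde{L})=(pM^d)^N$, and \eqref{eq:dim_formula} yields $\dim_H F\le s-\delta$. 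Your large-deviations and Borel--Cantelli machinery is fine once you have the right containment, but the containment you need is ``$\ge 2$ survive'', not ``all $M^{kd}$ survive''.
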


\begin{theorem}
  \label{thm:lpor_dimbound}
  For any $p > M^{-d}$, $\varepsilon>0$ there exists $\delta =\delta(d,M,p,\varepsilon)>0$, such that a.s. 
  \[
    \dim_H\{x\in E: \lpor(E,x) > \epsilon\} < s -\delta.
  \]
\end{theorem}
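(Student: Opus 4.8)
The plan is to make precise the heuristic that a ``typical'' point of $E$ has, at infinitely many scales, a neighbourhood so densely occupied by $E$ that the porosity there is small. The exceptional set will then be contained in the set of branches of the fractal percolation tree along which a suitable sequence of positive-probability ``density events'' \emph{all} fail, and a Galton--Watson counting will show that such branches form a set of Hausdorff dimension strictly below $s$.

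\textbf{A local density event.} Since $\{x\in E:\lpor(E,x)>\varepsilon\}$ decreases as $\varepsilon$ increases, we may assume $\varepsilon$ is as small as we wish, say $\varepsilon<\tfrac1{100\sqrt d}$. Fix a constant $\gamma\in(0,\tfrac12)$ (e.g.\ $\gamma=\tfrac14$), an integer $a\ge1$ large enough that the ``central'' region $C_\gamma$ defined below is non-empty and proper (it suffices that $M^a\ge3$), and an integer $j$ with $\sqrt d\,M^{-j}<\tfrac12\varepsilon\gamma$. For $Q\in\QQ_m$ and $x\in Q$ write $Q_m(x)=Q$ and call $m$ a \emph{good level for $x$} if the cube $Q_{m+a}(x)$ lies at $\ell^\infty$-distance $\ge\gamma M^{-m}$ from $\partial Q$; this is a deterministic condition on the digits $d_{m+1}(x),\dots,d_{m+a}(x)$ (membership of an $a$-tuple of digits in a fixed set $C_\gamma\subsetneq(\{0,\dots,M-1\}^d)^a$), and it forces $B(x,\gamma M^{-m})\subset Q_m(x)$. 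For such $m$ let $\HH_m(Q_m(x))$ be the event that every level-$(m+j)$ subcube of $Q_m(x)$ meets $E$. Since $Q_m(x)\in E_m$ whenever $x\in E$, the choice of $j$ makes $\HH_m$ imply that $E\cap Q_m(x)$ is $\tfrac12\varepsilon\gamma M^{-m}$-dense in $Q_m(x)$, and hence $\por(E,x,\gamma M^{-m})\le\tfrac12\varepsilon<\varepsilon$. Requiring all subcubes of $Q_m(x)$ down to level $m+j$ to be retained and each of them to have non-empty limit set shows that $\PP(\HH_m(Q)\mid Q\in E_m)\ge q$ for a constant $q=q(d,M,p,\varepsilon)>0$. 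Consequently, if $x$ belongs to the exceptional set then, because $\lpor(E,x)>\varepsilon$ forces $\por(E,x,\gamma M^{-m})>\varepsilon$ for all large $m$, the event $\HH_m(Q_m(x))$ fails at every good level $m$ beyond some $m_0=m_0(x)$. Thus the exceptional set is contained in $\bigcup_{m_0\in\N}B_{m_0}$, where $B_{m_0}=\{x\in E:\HH_m(Q_m(x))\text{ fails at every good level }m\ge m_0\}$.

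\textbf{Counting.} It remains to show $\dim_H B_{m_0}\le s-\delta$ for some $\delta=\delta(d,M,p,\varepsilon)>0$ not depending on $m_0$. Cover $B_{m_0}$ by the cubes $Q\in\QQ_n$ meeting it: each such $Q$ lies in $E_n$, and, writing $Q^{(m)}$ for its level-$m$ ancestor, $\HH_m(Q^{(m)})$ fails for every $m\in[m_0,n-a]$ that is good for the digits of $Q$; let $g_n(Q)$ be the number of such levels. Split the expected number of covering cubes according to whether $g_n(Q)\ge\kappa n$ or $g_n(Q)<\kappa n$, for a suitable small $\kappa>0$. In the first case there are at most $M^{dn}$ cubes, each contributing (using the decorrelation estimate discussed below) at most $p^n(1-q')^{\kappa n}$, which, since $M^{dn}p^n=M^{sn}$, bounds this part by $M^{(s-\delta_1)n}$. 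In the second case, the number of digit sequences of length $n$ with fewer than $\kappa n$ good levels is at most $M^{dn}e^{-cn}$ by a Chernoff bound, because good levels occur with a definite positive frequency for a uniform digit sequence; so this part is at most $M^{dn}e^{-cn}p^n=M^{(s-\delta_2)n}$. Hence $\EE\#\{Q\in\QQ_n:Q\cap B_{m_0}\ne\varnothing\}\le 2M^{(s-\delta)n}$ with $\delta=\min(\delta_1,\delta_2)>0$, and Borel--Cantelli together with $\diam Q=\sqrt d\,M^{-n}$ gives $\dim_H B_{m_0}\le s-\tfrac\delta2$ almost surely. Taking the union over $m_0$ completes the proof.

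\textbf{Main obstacle.} The essential point is the decorrelation estimate needed for the first case: the events $\HH_m(Q^{(m)})$, $m$ good in $[m_0,n-a]$, concern the sub-percolations inside the \emph{nested} cubes $Q^{(m)}\supset Q^{(m+1)}\supset\cdots$, so they are far from independent, and one must show $\PP(\HH_m(Q^{(m)})\mid Q\in E_n,\ \HH_{m'}(Q^{(m')})\text{ fails for every good }m'>m)\ge q'>0$ in order to produce the factor $(1-q')^{\kappa n}$. The resolution is that $\HH_m(Q^{(m)})$ can be guaranteed by the combination of an event measurable with respect to the percolation in $Q^{(m)}\setminus Q^{(m+1)}$ (which is independent of everything happening inside $Q^{(m+1)}$) with a requirement on the level-$(m+j)$ subcubes of $Q^{(m+1)}$ that remains of bounded-below conditional probability, since the conditioning only forces finitely many ``deep'' subcubes, at levels $>m+j$, to miss $E$; revealing the percolation off the branch $Q^{(m_0)}\supset\cdots\supset Q$ layer by layer then yields the product estimate. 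A second, milder, geometric point is already built into the definition of a good level: a hole witnessing positive porosity at scale $\sim M^{-m}$ around $x$ may sit in a cube \emph{neighbouring} $Q_m(x)$, whose fate one cannot control, which is exactly why attention is restricted to levels where $x$ lies deep inside $Q_m(x)$; the points that are rarely deep inside their ancestors are negligible because that property confines the address of $x$ to a strictly thinned subtree of $E$, of dimension $<s$ by the Galton--Watson theory recalled in Section~\ref{sub:GW} (this is the reason the second case above is a genuine loss in exponent).
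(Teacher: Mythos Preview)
Your approach is genuinely different from the paper's and, once the decorrelation step is made rigorous, it works. The paper avoids all of your moment/Chernoff machinery by a much shorter direct construction: choose $N$ with $3\sqrt d\,M^{-N}<\varepsilon$, and inside each $Q\in\QQ_{Nn}$ keep all surviving $Q'\prec_N Q$ \emph{except} the centre cube when every $Q'\prec_N Q$ survives. This defines a subset $F_n\subset E_{nN}$ whose cube-count is itself a Galton--Watson process with offspring $L=\widetilde L-\mathbf 1[\widetilde L=M^{dN}]$, so $\EE(L)=\EE(\widetilde L)-\PP(\widetilde L=M^{dN})$ is strictly smaller than $\EE(\widetilde L)$ and \eqref{eq:dim_formula} gives $\dim_H F\le s-\delta$ immediately. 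The inclusion $\{x:\lpor(E,x)>\varepsilon\}\subset\bigcup_{Q}E_{\varepsilon,Q}\subset F$ is the one-line observation that if $x$ lies in the centre cube and \emph{all} $Q'\prec_N Q$ survive, then $\por(E,x,\tfrac13 M^{-nN})\le 3\sqrt d\,M^{-N}$. So the paper bypasses both your ``good level'' bookkeeping and the decorrelation problem entirely by building the dimension drop into a single modified GW offspring law. Your route is more robust in spirit (it would adapt to settings where no clean sub-GW process is available), but here it is substantially heavier.

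There is, however, a genuine gap in your resolution of the ``main obstacle''. Your claim that $\PP(\HH_m\mid Q\in E_n,\ \HH_{m'}\text{ fails for all good }m'>m)\ge q'$ is not justified by the paragraph you wrote: the conditioning event is decreasing, $\HH_m$ is increasing, and FKG goes the wrong way, so ``the conditioning only forces finitely many deep subcubes to miss $E$'' does not by itself give a uniform lower bound. What actually works is to replace $\HH_m$ by the off-branch event $\HH^*_m$ = ``every level-$(m+j)$ subcube of $Q^{(m)}$ \emph{other than} $Q^{(m+j)}$ survives'', note that $\HH_m$ fails implies $\HH^*_m$ fails (since $Q^{(m+j)}$ certainly survives along the branch of $x\in E$), and observe that conditioned on the branch being retained the events $\HH^*_m$ depend only on the off-branch percolation branching at levels $m+1,\dots,m+j$; hence after thinning the good levels to be spaced at least $j$ apart (losing only a factor $j$ in the exponent) they are genuinely independent and the product bound $(1-q')^{\kappa n/j}$ follows. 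You should make this thinning explicit; the phrase ``revealing the percolation off the branch layer by layer'' does not do the job as written.
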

Regarding lower bounds for the dimension of the points where the porosities take exceptional values, we have the following theorems.
\begin{theorem}
  \label{thm:upor_lowerbound}
  For any $p > M^{-d}$, there is $\varepsilon_0=\varepsilon_0(d,M,p)>0$ such that for $\varepsilon\in (0, \varepsilon_0)$ there exists $\delta =\delta(d,M,p,\varepsilon)>0$, such that a.s. on non-extinction,
  \[
    \dim_H\{x \in E: \upor(E,x) \leq 1/2-\varepsilon\} > \delta.
  \]
\end{theorem}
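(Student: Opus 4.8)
\emph{Overview.} The plan is to realise a positive-dimensional subset of $\{x\in E:\upor(E,x)\le\tfrac12-\varepsilon_0\}$ as the boundary of a supercritical Galton--Watson tree embedded in the percolation tree, and then to pass from ``positive probability'' to ``a.s. on non-extinction'' using the self-similarity of the model. I fix once and for all a small $\varepsilon_0=\varepsilon_0(d,M,p)>0$ and a large integer $k=k(d,M,p,\varepsilon_0)$, and I work with $M^k$-adic cubes; since the $M^k$-adic percolation with parameter $p^k$ again satisfies $p^k>M^{-kd}$ and has the same limit set $E$, this is only a change of notation. It is enough to produce a single $\delta_0>0$ with $\dim_H\{x\in E:\upor(E,x)\le\tfrac12-\varepsilon_0\}>\delta_0$ a.s. on non-extinction, because this set only grows as $\varepsilon$ decreases. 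The first task (Step~1) is a purely combinatorial criterion: on a decreasing chain $Q_0\supseteq Q_1\supseteq\cdots$ of retained, non-extinct $M^k$-adic cubes with $\bigcap_nQ_n=\{x\}$, I want a condition forcing $\upor(E,x)\le\tfrac12-\varepsilon_0$. The key observation is that demanding $\por(E,x,r)\le\tfrac12-\varepsilon_0$ is a \emph{coarse} requirement: an $E$-free ball of radius $(\tfrac12-\varepsilon_0)r$ inside $B(x,r)$ must lie near the ``equator'' of $B(x,r)$, so forbidding all of them only asks that $E$ meet each of a bounded number (depending on $d$ alone) of cluster regions of size comparable to $r$ that surround $x$. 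Accordingly I ask that (i) each $Q_{n+1}$ is an \emph{interior} sub-cube of $Q_n$, so that $x$ stays quantitatively away from $\partial Q_n$ at every scale, and (ii) inside $Q_{n-1}$ the percolation retains a cluster of $O_d(1)$ non-extinct cubes at scale comparable to $\ell(Q_n)$ that $(\tfrac12-\varepsilon_0)$-surrounds every point of $Q_{n+1}$, together with finitely many analogous sub-clusters at the intermediate scales between $\ell(Q_{n+1})$ and $\ell(Q_n)$. Interiority makes each level-$n$ cluster control $\por(E,x,r)$ over a full multiplicative window of $r$, and the windows over all $n$ exhaust every small $r$; this yields (i)--(ii)$\Rightarrow\upor(E,x)\le\tfrac12-\varepsilon_0$.

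\emph{The branching process.} Let $\mathcal T$ be the random tree whose nodes are the $M^k$-adic cubes $Q$ that are retained, non-extinct, and for which the cluster condition of Step~1 holds around the parent of $Q$. Because the cluster condition around a child $Q'$ of a node $Q$ involves only retention of $O_d(1)$ cubes among the $k$ generations between $Q$ and $Q'$ (probability at least $p^{O(k)}$) together with the non-extinction of $O_d(1)$ of those cubes (probability at least $\eta^{O_d(1)}$, where $\eta=\PP(E\neq\varnothing)$), and because these events are essentially independent over the $\approx M^{kd}$ interior children of $Q$, one obtains that each interior child of a node becomes a node independently with probability at least a constant $c=c(d,M,p,\varepsilon_0)>0$. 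Hence $\mathcal T$ stochastically dominates a Galton--Watson tree with offspring mean $\ge cM^{kd}$, which exceeds $1$ once $k$ is large enough; therefore $\PP(\mathcal T\text{ is infinite})>0$.

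\emph{Dimension and upgrading.} On $\{\mathcal T\text{ infinite}\}$ the boundary $\partial\mathcal T$ lies in $\{x\in E:\upor(E,x)\le\tfrac12-\varepsilon_0\}$ by Step~1, and the standard dimension theory of Galton--Watson subtrees of the $M^k$-adic tree gives $\dim_H\partial\mathcal T\ge\log(cM^{kd})/(k\log M)=:\delta_0>0$ a.s. on $\{\mathcal T\text{ infinite}\}$. Finally, the construction inside any retained $M^k$-adic cube is an independent rescaled copy of the construction inside $[0,1]^d$; on non-extinction there are infinitely many disjoint ``fresh'' cubes strung along $E$, so by Borel--Cantelli the event $\{\mathcal T\text{ infinite}\}$ occurs inside one of them a.s.\ on non-extinction, and rescaling does not lower $\dim_H$. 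That the a.s.\ value of this dimension on non-extinction is a deterministic constant is a routine zero--one law of the type already used for fractal percolation.

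\emph{Main obstacle.} The delicate step is Step~2, i.e.\ verifying supercriticality of the embedded process for \emph{every} $p>M^{-d}$. The tension is between the structural cost of the cluster condition and the combinatorial freedom $\approx M^{kd}$ of where to continue the chain: one must bound the former by something of order $p^{O(k)}$ (rather than, say, $p^{O(M^{kd})}$) and control its dependence on $\eta$, so that the product with $M^{kd}$ still diverges as $k\to\infty$. This is exactly where the coarseness exploited in Step~1 is essential -- it is what allows the surrounding clusters to be kept of bounded cardinality and at scales proportional to the current one -- and it also forces care with the conditional independence of the events attached to sibling cubes. By contrast, Step~1 is a geometric computation and Step~3 is classical (the Hawkes--Lyons dimension formula for Galton--Watson boundaries together with a Borel--Cantelli/zero--one argument).
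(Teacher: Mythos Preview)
Your overall architecture---build a supercritical Galton--Watson subtree whose boundary lies in $\{x:\upor(E,x)\le\tfrac12-\varepsilon_0\}$, then upgrade via a zero--one law---is exactly the paper's. The gap is in Step~2, precisely where you flag the main obstacle, and your proposed resolution does not quite close it.

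You place the surrounding cluster for a child $Q'=Q_{n+1}$ at \emph{scale comparable to $\ell(Q_n)$}, i.e.\ at level $k$ below the parent $Q=Q_{n-1}$ (with further sub-clusters at the intermediate scales). A cube at that depth is retained with probability $p^{k}$, so insisting on $O_d(1)$ such cubes in addition to the chain gives a structural cost of order $p^{C_d k}$ with $C_d\ge 2$. The resulting mean offspring is then of order $M^{kd}p^{C_d k}=(M^d p^{C_d})^{k}$, which exceeds $1$ for large $k$ only when $p>M^{-d/C_d}$. Your assertion that ``the product with $M^{kd}$ still diverges as $k\to\infty$'' therefore fails for $p$ in the interval $(M^{-d},M^{-d/C_d}]$, and the construction as described does not cover every $p>M^{-d}$. (You correctly note that $p^{O(M^{kd})}$ would be disastrous, but $p^{O(k)}$ with an implicit constant strictly larger than $1$ is already too expensive.)

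The paper sidesteps this by decoupling the branching depth from the depth at which the barrier is imposed. One fixes an integer $a\ge\sqrt{d}+2$ once and for all, and for each $Q\in\mathcal Q_{nN}$ one first branches to all $Q'\prec_{N-a}Q$ that are retained (expected number $(pM^{d})^{N-a}$), and then, \emph{for each such $Q'$ separately}, asks that \emph{all} $M^{ad}$ sub-cubes $Q''\prec_a Q'$ survive and keeps only those $Q''$ with $\dist(Q'',\partial Q')>(a-1)M^{-(n+1)N}$. The barrier condition now lives on a fixed block of $a$ levels and has probability $\PP(\#E_a=M^{ad})(1-q)^{M^{ad}}$, a \emph{constant} $C(d,M,p)>0$ independent of $N$. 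Hence the mean offspring is $C(d,M,p)\,(pM^{d})^{N}$, which is $>1$ for $N$ large \emph{for every} $p>M^{-d}$. The ``full block surviving'' barrier also makes the porosity bound a simple line-segment argument: from any $x\in F$ the segment through the centre of a putative hole meets a surviving $Q''$ at controlled distance from $x$, yielding $\por(E,x,r)\le\tfrac12-\tfrac{1}{2\sqrt d}M^{-2N}$ for all small $r$. If you prefer to avoid demanding a full block (as you need for the generalisation where $\PP(L_o=M^d)$ may vanish), the paper replaces it by a conical density lemma requiring only $(M^{d-1}+1)^{N_0}$ surviving sub-cubes at fixed depth $N_0$; the point is again that the barrier depth is fixed while the branching depth $N\to\infty$.
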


Theorem \ref{thm:upor_lowerbound} shows in particular that $c \neq 1/2$ in Theorem \ref{thm:E_upper_porous}.

\begin{theorem}
  \label{thm:lpor_lowerbound}
  For any $p > M^{-d}$, there is $\varepsilon_0=\varepsilon_0(p,M,d)>0$ such that for $\varepsilon\in (0, \varepsilon_0)$ there exists $\delta =\delta(d,M,p,\varepsilon)>0$, such that a.s. on non-extinction,
  \[
    \dim_H\{x\in E: \lpor(E, x)> \varepsilon\} > \delta.
  \]
\end{theorem}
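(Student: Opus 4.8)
The plan is to construct, inside the percolation set $E$, a large collection of points that all have lower porosity bounded below by a fixed $\varepsilon>0$, and to do this on an event of positive probability that can then be upgraded to ``a.s. on non-extinction'' by the usual $0$--$1$ argument for percolation (conditioning on the first level and using independence of the subtrees). The mechanism for forcing lower porosity to be large at a point $x$ is to guarantee that at \emph{every} scale $r$ there is a definite-size hole in $E$ near $x$: concretely, we want that for each $n$, inside the $M$-adic cube of generation $n$ containing $x$, a fixed proportion of the $M^d$ children are killed in a controlled pattern, so that $\por(E,x,M^{-n})$ stays above some $\varepsilon_0$ for all $n$. So the first step is a deterministic lemma: fix a ``template'' subset $G\subsetneq\{0,\dots,M-1\}^d$ of surviving children (with at least one killed child, positioned so that it creates an honest gap of relative size $\varepsilon_0$ around any point sitting in the surviving part), and observe that if a tree has the property that the set of surviving children of every node equals exactly $G$, then every point of the resulting limit set has $\lpor \ge \varepsilon_0$, and that limit set has Hausdorff dimension $\log\#G/\log M>0$.

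The second step is the probabilistic heart: we cannot demand that \emph{every} node realizes exactly the template $G$ (that event has probability zero), so instead we grow a subtree by a branching argument. At each node we look at which children survive the percolation; we declare the node ``good'' if the surviving set \emph{contains} a translate/pattern that still leaves an $\varepsilon_0$-hole — equivalently, if at least one child is killed in a usable position — and among the good children we retain a subset of fixed size $k$ (chosen so that $k\le$ the number of survivors we are guaranteed with positive probability, and $k\ge 2$). The retained children form the next generation of an embedded Galton--Watson tree; crucially, since each node independently keeps each child with probability $p>M^{-d}$, the probability that a given node is ``good with at least $k$ usable survivors'' is some fixed $q=q(d,M,p)>0$, and the number of such usable survivors dominates a fixed random variable with mean $>1$ (here is where we use $p>M^{-d}$, which already gives $pM^d>1$ expected survivors, and we only need to shave off a bounded number to enforce the hole). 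So this embedded process is a supercritical Galton--Watson process; by the theory recalled in Section~\ref{sub:GW} it survives with positive probability, and on survival its limit set $F\subset E$ has Hausdorff dimension $\ge \log k/\log M >0$ almost surely. By construction every $x\in F$ has $\lpor(E,x)\ge \varepsilon_0$ (the hole at scale $M^{-n}$ is inherited from the good node of generation $n$ above $x$), hence $\lpor(E,x)>\varepsilon$ for any $\varepsilon<\varepsilon_0$, giving $\varepsilon_0=\varepsilon_0(p,M,d)$ and $\delta=\log k/\log M$.

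The third step is the promotion to ``a.s. on non-extinction''. We have shown $\PP(\dim_H\{x\in E:\lpor(E,x)>\varepsilon\}>\delta)>0$; write this event as $A$. Since $E$ restricted to each first-generation cube is, after rescaling, an independent copy of the whole construction, the event $A$ occurs for $E\cap Q$ for at least one surviving $Q\in\QQ_n$ with conditional probability tending to $1$ as $n\to\infty$ on non-extinction (a standard argument: the probability that none of the surviving level-$n$ cubes carries a copy of $A$ is at most $(1-\PP(A))^{\#\{\text{survivors}\}}$, and the number of survivors $\to\infty$ a.s.\ on non-extinction). Because $\dim_H$ is countably stable and a lower porosity estimate localized to $E\cap Q$ is a lower porosity estimate for $E$ itself at those points, we conclude that a.s.\ on non-extinction $\dim_H\{x\in E:\lpor(E,x)>\varepsilon\}\ge\delta$, possibly after shrinking $\delta$, which is the claim.

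The main obstacle I anticipate is the deterministic bookkeeping in step one and two: making sure the ``usable survival patterns'' simultaneously (i) have probability bounded below, (ii) leave enough surviving children ($k\ge2$) to keep the embedded tree supercritical, and (iii) genuinely force a hole of a \emph{fixed} relative radius $\varepsilon_0$ around \emph{every} point of the surviving sub-cube, not just near the center. Getting (iii) uniform in the position of $x$ within the cube is the delicate point --- one wants the killed child (or a killed child in each of a few ``directions'') to be far enough from the surviving region; this may require working two generations at a time, or choosing the template so that a killed cube is available on several sides, so that whichever surviving cube $x$ falls into, some killed neighbor is within bounded distance. Once the template is fixed and its survival probability computed, the Galton--Watson machinery and the $0$--$1$ upgrade are routine.
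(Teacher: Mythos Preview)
Your overall strategy --- build an embedded Galton--Watson subtree $F\subset E$ by keeping only vertices whose cube has at least one non-surviving sub-cube, read off $\dim_H F>0$ from \eqref{eq:dim_formula}, and upgrade via the zero--one law --- is exactly the paper's approach. Two points of divergence are worth noting.

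First, the obstacle you flag at the end (positioning the hole so that \emph{every} point of the surviving region sees it) is a red herring. If $x$ lies in $Q\in\QQ_n$ and \emph{any} $Q'\prec_1 Q$ fails to survive, then $Q'$ contains a ball of radius $\tfrac12 M^{-n-1}$ disjoint from $E$, and since $Q\subset B(x,\sqrt d\, M^{-n})$ this already gives $\por(E,x,\sqrt d\, M^{-n})\ge \tfrac{1}{2\sqrt d\, M}$ uniformly in the location of $x$ and of the missing child. No template or directional bookkeeping is needed.

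Second, the step where you assert supercriticality from $pM^d>1$ alone does not work at a single generation. If you keep all survivors whenever at least one child dies and nothing otherwise, the mean offspring of your subtree is
\[
\EE\bigl[L_o\,\mathbf{1}[L_o<M^d]\bigr]=pM^d-M^d p^{M^d}=M^d p\bigl(1-p^{M^d-1}\bigr),
\]
which tends to $0$ as $p\uparrow 1$; any ``fixed size $k$'' variant is bounded by this. So for $p$ close to $1$ your embedded process is subcritical. The paper's fix is precisely the one you mention in passing (for the wrong reason): work in blocks of $N$ generations. Then the relevant mean is $\EE(\widetilde L)-M^{dN}\PP(\widetilde L=M^{dN})=(pM^d)^N-M^{dN}\PP(\widetilde L=M^{dN})$; the first term $\to\infty$ and the second $\to 0$ as $N\to\infty$, so for $N$ large the subtree is supercritical and the rest of your argument goes through with $\varepsilon_0\asymp M^{-2N}$.
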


One can read quantitative bounds for $c$ in Theorem \ref{thm:E_upper_porous} and  $\delta$ in Theorems \ref{thm:upor_dimbound}--\ref{thm:lpor_lowerbound} from the proofs. See the discussion in Section \ref{sec:remarks}.

Since the natural measure $\mu$ is a.s. exact dimensional of dimension $s$ (see \cite{MauldinWilliams86}), the Theorems \ref{thm:upor_dimbound} and \ref{thm:lpor_dimbound} generalize 
the result of Berlinkov and J\"arvenp\"a\"a concerning the upper and lower porosities of $E$ at $\mu$-almost all points.

All these results remain valid for the \emph{inhomogeneous fractal percolation}, where instead of a fixed $p$, each $Q\in\mathcal{Q}_1$ is chosen with probability $0<p_Q<1$ and the process is continued in a self-similar way (see e.g. \cite{RamsSimon2015} for a detailed description of the model). 
In fact,it is allowed that some cubes are chosen deterministically (i.e. $p_Q=1$ for some, but \textbf{not all}, $Q\in\mathcal{Q}_1$). If it is allowed that some $p_Q=0$, then Theorem \ref{thm:lpor_dimbound} obviousy fails, but the other result are still valid. In Section \ref{sec:MoreGeneral}, we will provide extensions of these results for random sets defined using more general selection processes.

In the above theorems, the actual value of the dimension is, in fact, a constant a.s. conditioned on non-extinction. The proof of 
this fact is relatively easy and follows as a simple corollary of a zero-one law 
for the percolation process, see Lemma \ref{lemma:zero-one}. 

\begin{theorem}
  For any $p > M^{-d}, 0\leq \alpha \leq \frac{1}{2}$, there exists $\beta_u=\beta_u(\alpha)$ and $\beta_l=\beta_l(\alpha)$, such that a.s. on non-extinction
  \[
    \dim_H\{x\in E: \upor(E,x) \leq \alpha\} = \beta_u
    \,\,\text{ and }\,\, 
    \dim_H\{x\in E: \lpor(E,x) \geq \alpha\} = \beta_l.
  \]
  \label{thm:asDim_upor_ubound}
\end{theorem}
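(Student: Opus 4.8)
Fix $0\le\alpha\le\tfrac12$ and set $D_u:=\dim_H\{x\in E:\upor(E,x)\le\alpha\}$ and $D_l:=\dim_H\{x\in E:\lpor(E,x)\ge\alpha\}$; I will describe the argument for $D_u$, the one for $D_l$ being identical. Since $t\mapsto\PP(D_u>t\mid E\neq\varnothing)$ is non-increasing, it suffices to show that this function is $\{0,1\}$-valued: then $D_u$ equals, a.s.\ on non-extinction, the constant $\beta_u:=\sup\{t:\PP(D_u>t\mid E\neq\varnothing)=1\}$, which depends only on $\alpha$ (with $p,M,d$ fixed).

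The first step is to record the self-similarity of the exceptional sets. The quantity $\upor(A,x)$ is determined by $A$ inside balls $B(x,r)$ with $r$ arbitrarily small, and it is invariant under similarity maps. Hence, if $Q\in\QQ_1$ is a surviving level-one cube ($Q\subset E_1$), $\psi_Q\colon[0,1]^d\to Q$ is the corresponding homothety, and $E^{(Q)}:=\psi_Q^{-1}(E\cap Q)$ — which, conditioned on $Q\subset E_1$, is an independent copy of $E$ — then $\upor(E,x)=\upor(E^{(Q)},\psi_Q^{-1}(x))$ for every $x$ in the interior of $Q$, since all sufficiently small balls about such $x$ lie inside $Q$. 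Decomposing the exceptional set according to the finitely many level-one cubes together with the $M$-adic grid gives
\[
  D_u=\max\!\Bigl(Y,\ \max_{Q\in\QQ_1,\,Q\subset E_1}D_u^{(Q)}\Bigr),
\]
where the $D_u^{(Q)}$ are i.i.d.\ copies of $D_u$ (given $Q\subset E_1$) and $Y$ is the Hausdorff dimension of the exceptional points lying on the grid. Crucially, $Y$ is bounded a priori: the trace of $E$ on any grid hyperplane is governed by a $(d-1)$-dimensional fractal percolation with the same parameter $p$, so $\dim_H$ of the grid part of $E$, hence $Y$, is a.s.\ at most $s':=\max(0,s-1)$, and since $p>M^{-d}$ forces $s>0$ we have $s'<s$.

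Now let $t\ge s'$. By the displayed recursion and $Y\le s'\le t$, the event $\{D_u>t\}$ occurs for $E$ if and only if it occurs (after rescaling) for at least one surviving level-one offspring of $E$. This is exactly the kind of event to which the zero–one law of Lemma \ref{lemma:zero-one} applies, so $\PP(D_u>t\mid E\neq\varnothing)\in\{0,1\}$ for every $t\ge s'$. If $\beta_u>s'$ this finishes the proof: for $s'<t<\beta_u$ the probability is positive, hence $1$, so $D_u\ge\beta_u$ a.s.\ on non-extinction, while $D_u\le\beta_u$ a.s.\ by the definition of $\beta_u$; therefore $D_u=\beta_u$. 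The same reasoning applies verbatim to $D_l$.

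The remaining case $\beta_u\le s'$ — which can occur only when $s>1$, so that $s'>0$ — is the genuine obstacle: then $D_u\le s'$ a.s.\ on non-extinction, the grid term $Y$ is of the same order as $D_u$, and $\{D_u>t\}$ for $t<s'$ is no longer inherited by a single offspring. The plan is to analyze $Y$ on its own: on each grid hyperplane $H$, the set $\{x\in E\cap H:\upor(E,x)\le\alpha\}$ is controlled by the (at most two) $(d-1)$-dimensional fractal percolations formed by the cubes abutting $H$, so by an argument in dimension $d-1$ — which one sets up as an induction on $d$, the base case $d=1$ having $s'=0$ and being already settled above — the variable $Y$ is itself a.s.\ constant on non-extinction, say $Y=\gamma$. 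The recursion then acquires a deterministic additive term, $D_u=\max(\gamma,\max_Q D_u^{(Q)})$, and re-running Lemma \ref{lemma:zero-one} on $\{D_u>t\}$ now over all thresholds $t\ge\gamma$ covers the full range and shows that $D_u$ is a.s.\ constant on non-extinction. The technical heart of this last step is the claim that the exceptional set on a grid hyperplane really reduces to a lower-dimensional percolation problem; the delicate point is a hyperplane across which $E$ survives on both sides, where the ambient porosity at a boundary point can be strictly smaller than the porosity of either one-sided trace, so that the reduction is not completely automatic.
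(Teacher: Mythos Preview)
Your recursion $D_u=\max(Y,\max_Q D_u^{(Q)})$ forces you to control the grid term $Y$, and the case $\beta_u\le s'$ leaves a genuine gap: the two-sided hyperplane issue you flag at the end really does obstruct the proposed induction on $d$, since for a grid point with $E$ surviving on both sides the ambient porosity is not determined by either one-sided trace, so the boundary contribution does not reduce to a $(d-1)$-dimensional problem of the same type.

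The paper sidesteps this entirely. Rather than decomposing into level-$1$ cubes, it applies Lemma~\ref{lemma:zero-one} directly to the property
\[
\mathcal{A}_\beta(Q):\quad \dim_H\{x\in E\cap\interior Q:\upor(E,x)\le\alpha\}>\beta.
\]
The observation you are missing is that $\interior Q\subset\interior Q'$ whenever $Q\subset Q'$, so $\mathcal{A}_\beta$ is inherited by ancestors for \emph{every} $\beta>0$, not only for $\beta\ge s'$; and since $\upor(E,x)$ for $x\in\interior Q$ depends only on the subtree rooted at $Q$, the property is admissible. Self-similarity makes $\PP(\mathcal{A}_\beta(Q)\mid Q\subset E_n)$ independent of $Q$, so Lemma~\ref{lemma:zero-one} gives $\PP(\mathcal{A}_\beta([0,1]^d)\mid E\neq\varnothing)\in\{0,1\}$ for all $\beta$, after which your own $\sup$ argument finishes. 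What remains is only $\partial[0,1]^d$, and there the grid issue becomes trivial: every $x\in E\cap\partial[0,1]^d$ has $\upor(E,x)=\lpor(E,x)=\tfrac12$ because a half-space is missing at every scale. Thus for $\alpha<\tfrac12$ the upper-porosity exceptional set already lies in $\interior[0,1]^d$, and no splitting into cases by the size of $\beta_u$ is needed. For the lower-porosity claim these boundary points are now \emph{included} in $\{\lpor\ge\alpha\}$, but $E\cap\partial[0,1]^d$ is governed by a finite union of $(d-1)$-dimensional fractal percolations and hence has a.s.\ constant Hausdorff dimension; then $D_l$ is the maximum of two a.s.\ constants.
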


Thus, in fact the Theorems \ref{thm:Re_upor_dimbound}--\ref{thm:Re_lpor_lowerbound} just give estimates for $\beta_u$ and $\beta_l$.   Note that $\dim_H\{x\in E: \lpor(E,x) \leq \alpha\}$ and $\dim_H\{x\in E: \upor(E,x) \geq \alpha\}$, equal $s$ almost surely,  since at
$\mu$-almost every point, $E$ has minimal lower porosity and maximal upper porosity.  

The structure of the paper is as follows. In Section \ref{sec:prel}, we will set up some notation and recall some basic results related to Galton-Watson branching processes and Galton-Watson trees. We will also explain how these results may be applied to families of nested random sets in $\R^d$ whose offspring distribution gives rise to a Galton-Watson tree. Our main results on the porosity properties of fractal percolation limit sets are proved in Section \ref{sec:proofs}. In Section \ref{sec:MoreGeneral}, we extend these results for the more general Galton-Watson type random sets mentioned above. We conclude in Section \ref{sec:remarks} with additional remarks and open problems.  

\section{Preliminaries}\label{sec:prel}

\subsection{Trees and subtrees}
A \emph{Tree} $T$ is a connected graph without cycles and with a distinguished vertex $\rho$ called the \emph{root}. 
If $v$ is a vertex of the tree, we let $|v|$ denote the number of edges on the shortest path from $\rho$ to $v$. In this case, we also say that ``$v$ is at distance $n$ from the root''. For $v\neq \rho$, 
we denote its immediate ancestor by $v^{-}$. For vertices $v$ and $u$, let
$v\wedge u$ be the last common vertex on the shortest paths from $\rho$ to $v$ and $u$. Denote $v\ge u$, if the shortest path connecting $\rho$ to $u$ visits the vertex $v$.

\emph{A Ray} is an infinite path $\sigma=v_1v_2\ldots$ on the tree that never visits a vertex twice. We define the (reduced) boundary of the tree to consist of all rays starting from the root and denote this by $\partial T$. The notations $\sigma\wedge\tau$ and $v\le\sigma$ are naturally extended for $\sigma,\tau$ in the boundary.

Given $0<\lambda<1$, we may define a metric on $\partial T$ as
\begin{equation*}
  d_\lambda(\sigma,\tau)=
  \begin{cases}
    \lambda^{|\sigma\wedge\tau|}&\text{ if }\sigma\neq\tau,\\
    0&\text{ if }\sigma=\tau\,.
  \end{cases}
\end{equation*}

\subsection{Galton-Watson branching processes and Galton-Watson trees}
\label{sub:GW}

The Galton-Watson ($GW$ in what follows) branching processes and Galton-Watson trees will be the most essentials concepts in our proofs. We next recall their definition and some known facts (see e.g. \cite{Peres1999}, \cite{LyonsPeres2014}). 
Let $L$ be a non-negative integer valued random variable 
and $\{L_{n,i}\}_{n,i \in \N}$ a sequence of independent copies of $L$. Let $Z_0=1$, $Z_1=L$ and
\[
  Z_{n+1}=\begin{cases}
    \sum^{Z_n}_{i=1} L_{n,i},\textrm{ if } Z_n >0\\
    0, \textrm{ if } Z_n=0
  \end{cases}
\]
for $n\in\N$.
This procedure defines a Markov chain $(Z_n)_{n\ge 0}$, which we call the \emph{Galton-Watson process with offspring distribution $L$}. There is a natural way to grow trees at random using a given $GW$-process; The number of vertices at distance $n\ge 1$ from the root equals $Z_n$ and if these vertices are denoted $v_{n,1},\ldots v_{n,Z_n}$, then $v_{n,k}$ has $L_{n,k}$ children. These trees are termed \emph{Galton-Watson trees}.

The most basic question in the study of branching processes is to find the value of the extinction probability 
\begin{equation}\label{eq:extinction}
  q = \PP( \text{eventually } Z_n =0 )\,,  
\end{equation}
and in particular, whether this is strictly less than one.
For $GW$-processes, it is well known that (assuming $\PP(L=1)<1$) 
\begin{equation}\label{eq:criticalp}
  q<1\text{ if and only if }\EE(L)>1\,.
\end{equation}

Conditional on non-extinction, what can be said about the size of $\partial T$ for a $GW$-tree $T$? In terms of dimension, there is a satisfactory answer available: Almost surely on non-extinction,
\begin{equation}\label{eq:boundarydim}
  \dim_H(\partial T)=\dim_B(\partial T)=\frac{\log\EE(L)}{-\log \lambda}\,.
\end{equation}
where $\dim_B$ denotes the box-counting dimension and these dimensions are calculated in the $d_\lambda$ metric as defined earlier. This result was proved by Hawkes \cite{Hawkes1981} under the assumption that $\EE[L(\log^{+} L)^2]<\infty$ and then by Lyons \cite{Lyons1990} for general $L$. We note that already Hawkes' result is enough for our purposes since in our applications to the random sets, $L$ will always be bounded.

We denote by $\QQ_n$ the family of closed $M$-adic sub-cubes of level $n$ of the unit cube $[0,1]^d$,
\[\QQ_n=\left\{\prod_{l=1}^d[i_l M^{-n},(i_{l}+1)M^{-n}]\,:\,0\le i_l\le M^{n}-1\right\}\]
and let $\QQ=\cup_{n\in\N}\QQ_n$. Write $Q'\prec_N Q$ if there is $n\in\N$ such that $Q\in\QQ_n$, $Q'\in\QQ_{n+N}$ and $Q'\subset Q$.
For later use, let $\textbf{1}[\mathcal{A}]$ denote the indicator function of an event $\mathcal{A}$ and for
$A,B\subset\R^d$, we denote $\dist(A,B)=\inf\{|x-y|\,:\,x\in A, y\in B\}$.

\subsection{Random fractals that give rise to $GW$-trees}
\label{sub:rf}

We next recall how fractal percolation is related to a branching process and a $GW$-tree. For later use, we present this connection in a more general setting.

To that end, suppose that $X_Q$, $Q\in\QQ$, $n\ge 0$, are random subsets of $\mathcal{Q}_1$ such that the number of $Q\in\mathcal{Q}_1$ contained in $X_Q$ is $L_Q$, where $L_Q$ are independent and identically distributed according to an initial random variable $L\in\{0,\ldots M^d\}$. We note that we are not assuming that $X_Q$ are independent or identically distributed, even thought $L_Q$ are.\footnote{The amount of selected sub-cubes are iid, but the way these sub-cubes are distributed inside the parent cube is basically free. Their distributions can be different for different parent cubes and all kinds of dependencies are allowed.} 
Let $E_0=[0,1]^d$ and $E_1=\cup X_{[0,1]^d}$. If $E_n=Q_1\cup\ldots\cup Q_{Z_n}$, $Q_i\in\mathcal{Q}_n$, we set
\[E_{n+1}=\bigcup_{i=1}^{Z_n}h_{Q_i}(\cup X_{Q_i})\,,\]
where $h_{Q_i}$ is the homothety (scaling composed with translation) sending $[0,1]^d$ onto $Q_i$ (and we set $E_{n+1}=\varnothing$, if $E_n=\varnothing$). Now $E_n$ is a decreasing sequence of random sets closely connected to the $GW$-process with offspring distribution $L$: The cubes in $\mathcal{Q}_n$ forming $E_n$ may be put into one to one correspondence with the vertices of the $GW$-tree $T$ at distance $n$ from the root (see Figure \ref{fig:CubesAndTrees}).
\begin{figure}
  \begin{centering}
    \begin{tikzpicture}[scale=.7]
      \gridThreeD{0}{0}{black!50}{2};
      \gridThreeD{0}{4.25}{black!50}{4};
      \gridThreeD{0}{8.50}{black!50}{8};
      \begin{scope}
        \myGlobalTransformation{0}{8.5};
        \node (root) at (4,4) [circle,fill=black] {};
      \end{scope}
      \begin{scope}
        \myGlobalTransformation{0}{4.25};
        \foreach \x in {2} {
          \foreach \y in {6,2} {
            \node (child\x\y) at (\x,\y) [circle,fill=black] {};
            \drawLinewithBG{root}{child\x\y};
          }
        }
        \node (child62) at (6,2) [circle,fill=black] {};
        \drawLinewithBG{root}{child62};
      \end{scope}
      \begin{scope}
        \myGlobalTransformation{0}{0};
        \foreach \x in {3,1} {
          \foreach \y in {7,5} {
            \node (child\x\y) at (\x,\y) [circle,fill=black] {};
            \drawLinewithBG{child26}{child\x\y};
          }
        }
        \foreach \x in {1} {
          \foreach \y in {3,1} {
            \node (child\x\y) at (\x,\y) [circle,fill=black] {};
            \drawLinewithBG{child22}{child\x\y};
          }
        }
        \foreach \x in {7,5} {
          \foreach \y in {3,1} {
            \node (child\x\y) at (\x,\y) [circle,fill=black] {};
            \drawLinewithBG{child62}{child\x\y};
          }
        }
      \end{scope}
      \begin{scope}
        \myGlobalTransformation{0}{0};
        \foreach \y in {7,5} {
          \foreach \x in {3,1}{
            \filldraw[black!30,opacity=.5] (\x,\y)++(-1,-1) rectangle ++(2,2);
            \node (child\x\y) at (\x,\y) [circle,fill=black] {};
            \drawLinewithBG{child26}{child\x\y};
          }
        }
        \foreach \x in {1} {
          \foreach \y in {3,1} {
            \filldraw[black!30,opacity=.5] (\x,\y)++(-1,-1) rectangle ++(2,2);
            \node (child\x\y) at (\x,\y) [circle,fill=black] {};
            \drawLinewithBG{child22}{child\x\y};
          }
        }
        \foreach \y in {3,1} {
          \foreach \x in {7,5}  {
            \filldraw[black!30,opacity=.5] (\x,\y)++(-1,-1) rectangle ++(2,2);
            \node (child\x\y) at (\x,\y) [circle,fill=black] {};
            \drawLinewithBG{child62}{child\x\y};
          }
        }
      \end{scope}

      \gridThreeD{0}{4.25}{black!50}{4};
      \begin{scope}
        \myGlobalTransformation{0}{4.25};
        \foreach \x in {2} {
          \foreach \y in {6,2} {
            \filldraw[black!30,opacity=.5] (\x,\y)++(-2,-2) rectangle ++(4,4);
            \node (child\x\y) at (\x,\y) [circle,fill=black] {};
            \drawLinewithBG{root}{child\x\y};
          }
        }
        \filldraw[black!30,opacity=.5] (4,0) rectangle ++(4,4);
        \node (child62) at (6,2) [circle,fill=black] {};
        \drawLinewithBG{root}{child62};

      \end{scope}

      \gridThreeD{0}{8.50}{black!50}{8};
      \begin{scope}
        \myGlobalTransformation{0}{8.5};
        \filldraw[black!30,opacity=.5] (0,0) rectangle ++(8,8);
        \node (root) at (4,4) [circle,fill=black] {};
      \end{scope}
    \end{tikzpicture}
  \end{centering}
  \caption{The correspondence of a random fractal and a GW -tree with $d=2$ and $M = 2$. Note that the fractal percolation process $E_n$ encodes also the geometric information, while the corresponding 
  GW-tree only contains information about the number of cubes $L_Q$.}
  \label{fig:CubesAndTrees}
\end{figure}
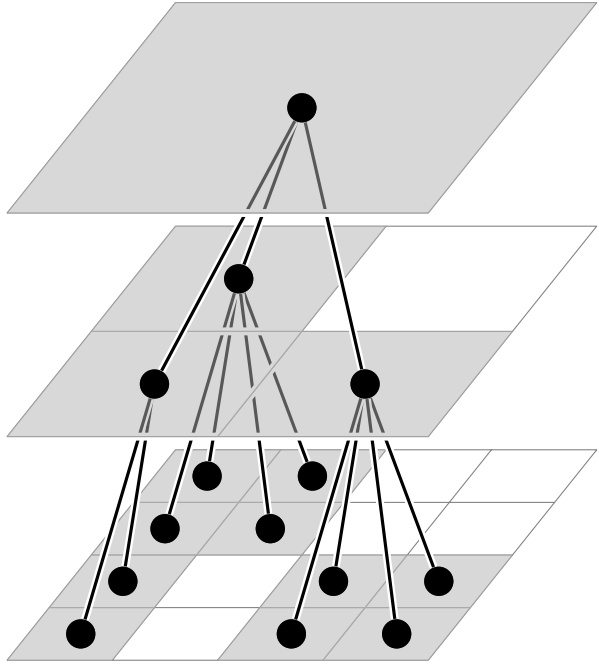
Further, if $v=v(Q)$, $w=w(Q')$ are vertices of this $GW$ tree corresponding to $Q\in\mathcal{Q}_n$, $Q'\in\mathcal{Q}_{n+1}$, then $v=w^-$ if and only if $Q'\subset h_{Q}(\cup X_Q)\cap E_{n+1}$. To study the properties of the random limit set $E=\cap_{n\in\N}E_n$, we define 
a natural projection $\Pi$ from $\partial T$ onto $E$. If $\sigma=v(Q_1)v(Q_2)\ldots
\in\partial T$, we set $\{\Pi(\sigma)\}=\cap_{n}Q_{n}$.

Now what does \eqref{eq:boundarydim} tell us about the random sets $E=\cap_{n \in \N} E_n$ defined above. Choosing $\lambda=M^{-1}$, it follows that $\Pi\colon\partial T\to E$ preserves Hausdorff as well as box-counting dimension, simply because on $\R^d$ these dimensions can be calculated using the $M$-adic cubes. Thus we arrive at the following important result. Almost surely on $E\neq\varnothing$,
\begin{equation}\label{eq:dim_formula}
  \dim_B(E)=\dim_H(E)=\frac{\log\mathbb{E}(L)}{\log M}\,.
\end{equation}
In the special case of fractal percolation, this implies the familiar formulas for the critical probability and for the almost sure Hausdorff dimension of $E$.

Fractal percolation (both homogeneous and inhomogeneous) are main examples of such constructions, but there are many others and we will come back to these later.  
For fractal percolation with parameter $p$ one has 
\begin{equation}\label{eq:p_k_perc}
  \PP(L=k) =
  \binom{M^d}{k}p^k(1-p)^{M^d-k}
\end{equation}
and $\mathbb{E}(L)=p M^d$.

\begin{remark}
  Formally, the law of the random set $E=E(\omega)$ is the completion of the infinite product of the discrete probability measures defining $X_Q$ for $Q\in\mathcal{Q}$ and we will denote it by $\mathbb{P}$. The random variables $\{X_Q\}_{Q\in\mathcal{Q}_n\,n\le k}$ give rise to an increasing filtration of $\sigma$-algebras $\mathcal{B}_k$ and the probability measure $\PP$ is defined on the $\sigma$-algebra $\mathcal{B}$ (on a probability space $\Omega$) induced by this filtration. In practise, we abuse notation slightly and denote e.g.
  $\PP(\upor(E,x)>c\text{ for all }x\in E)$ instead of $\PP(\omega\in\Omega\,:\,\upor(E(\omega),x)>c\text{ for all }x\in E)$.
  For practical purposes, we will also denote the law of the GW-process and the corresponding GW-tree by $\PP$, as well as the law of the discrete random variable $L_o$ generating the GW-process. This is for simplicity of notation and should not cause any confusion.
\end{remark}

\section{Proofs of Theorems \ref{thm:E_upper_porous}--\ref{thm:lpor_lowerbound}}
\label{sec:proofs}

\subsection{Outline of the method}

Our general strategy in all the proofs of Theorems \ref{thm:E_upper_porous}--\ref{thm:lpor_lowerbound} is to find a sequence  $F_n\subset E_n$ such that the exceptional set we are investigating contains (or is contained) 
in $F=\cap_{n \in \N} F_n$. We do this in such a way that the number of cubes forming $F_n$ defines a $GW$-process, so that \eqref{eq:criticalp} and  \eqref{eq:dim_formula} can be used to calculate the dimension 
(and extinction/non-extinction) of $F$, and whence in estimating the dimensional properties of the exceptional set at hand.

Although we do not formulate any of our results in terms of trees, there is a conceptual connection to problems which study the probability and existence of sufficiently regular sub-trees of $GW$-trees. See \cite{ChayesChayesDurrett1988, PakesDekking1991, LyonsPeres2014}. Roughly speaking, we look at a given $GW$-tree with a fixed parameter $N$ and label the vertices, where the induced sub-tree up to level $N$ has some required property (depending on the problem at hand). We look at the sub-tree induced by this labelling and provided $N$ is large enough, we show that this sub-tree becomes almost surely extinct or has certain dimension, etc.  

\subsection{Preparations}

For later use in Section \ref{sec:MoreGeneral} we present the following notations and definitions in the context described in Section \ref{sub:rf} above.

We introduce the following notation: We say that $Q\in\mathcal{Q}_n$ is \emph{surviving}, if for each $m\ge 1$, there is $Q_m\prec_m Q$ such that $Q_m\subset E_{n+m}$. This is essentially, but not exactly, the same as assuming $Q\cap E\neq\varnothing$. The difference is that $\partial Q\cap E$ may be nonempty even if $Q$ is not surviving, if some of the neighbouring cubes of $Q$ have surviving points in the boundary. To avoid confusion, we stress that this notation is always defined in terms of the process defining the initial random set $E$ (shortly we will consider various subsets of $E_n$ and $E$ also defined in terms of $GW$-processes). 

We will abuse notation slightly and denote simply 
\[
  \#E_n := \#\left\{Q \in \QQ_n : Q \subset E_n  \right\} 
\]
even though $E_n$ is not a collection of cubes but rather a union of such a collection.
Let us further denote by $L_o$ the offspring distribution of the $GW$-process $\# E_{n}$. In case of fractal percolation, this is given by  \eqref{eq:p_k_perc}.

Denote by $Z_n$ the number of surviving cubes in $\mathcal{Q}_{n}$ and for each $N\in\N$, consider the random variable $\widetilde{L}=\widetilde{L}^{N}=\widetilde{L}^{N,M,d,p}$ such that
\[\mathbb{P}(\widetilde{L}=k)=\mathbb{P}(Z_N=k\,|\,E\neq\varnothing)\,.\]
Since $\# E_n$ is a $GW$-process with offspring distribution $L_o$ we notice that for each $Q\in\mathcal{Q}_N$ for which $\PP(Q \subset E_N) > 0$, 
\[
  \PP\left( Q \text{ is surviving } \,|\, Q \subset E_{N} \right)= \PP\left( E \neq \varnothing \right) = 1-q \, ,
\] 
where $q$ is the extinction probability of the original process, which only depends on $L_o$. Thus
\[
  \PP\left(Q\text{ is surviving})=\PP(Q\text{ is surviving }\,|\,Q\subset E_N\right)\PP(Q\subset E_N)=(1-q) \PP(Q\subset E_N)\,,
\]
and
\begin{align*}
  &\PP\left( \#\left\{Q'\prec_N Q\,:\,Q'\text{ is surviving}\right\}  = k  \,|\, Q\subset E_{n N}  \right)\\
  &=\PP\left( \#\left\{Q' \in \QQ_{N}\,:\,Q'\text{ is surviving}  \right\} = k \right),
\end{align*}
for all $Q\in\QQ_{nN}$ with $\PP(Q\subset E_{nN})>0$.
It follows that
conditional on $E\neq\varnothing$, $(Z_{nN})_{n\ge 1}$ is a $GW$-process with offspring distribution $\widetilde{L}$.

We can now further compute the expectation of $\widetilde{L}$ as follows, 
\begin{align*}
  \EE(Z_N)&=\sum_{Q\in\mathcal{Q}_N}\PP\left(Q\text{ is surviving}\right) \\
  &=(1-q)\sum_{Q \in \QQ_N} \PP(Q \subset E_{N}) = (1-q) \EE(\# E_N) = (1-q) \EE(L_o)^N\,,
\end{align*}
where that last equation is a standard fact for $GW$-processes (see \cite{LyonsPeres2014}).
Thus, for the conditional expectation
\begin{equation}\label{eq:EL}
  \EE(\widetilde{L})=\EE(Z_N\,|\,\text{non-extinction})=(1-q)^{-1}\EE(Z_N)= \EE(L_o)^N\,.
\end{equation}

In the sequel, we will consider various $GW$-processes defined as pushforwards of $(Z_{nN})_n$ (for suitably chosen $N$) and compare their mean offspring distribution with that of $\widetilde{L}$.

Next we present a simple zero-one law for the random sets $E$.
Let $\mathcal{A}$ be a property among the pairs $Q\in\QQ$, $E\subset[0,1]^d$ such that
\begin{itemize}
  \item Whether $(Q,E)$ has the property $\mathcal{A}$ is completely determined by the random variables $\{X_{Q'}\}$ for the sub-cubes $Q'\subset Q$ (of all generations) of $Q$.
  \item If $(Q,E)$ has property $\mathcal{A}$, then $(Q',E)$ has property $\mathcal{A}$ whenever $Q\prec_N Q'$ for some $N\in\N$.
\end{itemize}
In what follows, we call such property $\mathcal{A}$ \emph{an admissible property}. This is closely connected to 
the more standard notion of inherited property among $GW$-trees.
We use this variant to allow the property to depend also on the geometry, and not only on the $GW$-tree that the percolation
process generares.

\begin{lemma}\label{lemma:zero-one}
  Suppose that $\mathcal{A}$ is an admissible property. If $\EE(L_o)>1$ and if there is $c>0$ such that $\PP((Q,E)\text{ has property }\mathcal{A}\,|\,Q\subset E_n)\ge c$ for all $Q\in\QQ_n$ and all $n\in\N$, then $\PP(([0,1]^d,E)\text{ has property }\mathcal{A}\,|\,\text{non-extinction})=1$.
\end{lemma}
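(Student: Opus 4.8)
The plan is to mimic the classical zero–one law for inherited properties of Galton--Watson trees (see \cite{LyonsPeres2014, Peres1999}), but carried out along the sparsened generations $nN$ and keeping track of the geometry through the cubes $Q\prec_N Q'$.

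\medskip

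First I would reduce to a fixed scale $N$. For $N\in\N$, call a cube $Q\in\QQ_{nN}$ \emph{good} if $Q\subset E_{nN}$ and $(Q,E)$ has property $\mathcal{A}$. By the first defining property of admissibility, for $Q\in\QQ_{nN}$ the event that $Q$ is good depends only on the random variables $\{X_{Q'}\}$ for sub-cubes $Q'\subset Q$; hence, conditionally on $\{Q\subset E_{nN}\}$, these events are independent across the cubes $Q\in\QQ_{nN}$ forming $E_{nN}$, and by self-similarity of the construction each has the same conditional probability as $\PP(([0,1]^d,E)\text{ has }\mathcal{A}\mid [0,1]^d\subset E_N)\ge c$. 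The second defining property (inheritance upward under $\prec_N$) shows that if $Q\prec_N Q'$ and $Q$ is good in generation $(n{+}1)N$ with $Q\subset E$, then $Q'$ inherits $\mathcal{A}$; so a single surviving good descendant forces the ancestor to have $\mathcal{A}$. In particular, once $([0,1]^d,E)$ fails to have $\mathcal{A}$, \emph{every} surviving $Q\in\QQ_{nN}$ must itself fail to have $\mathcal{A}$ (otherwise it would have a surviving sub-cube $Q'\prec_? Q$ with $\mathcal{A}$, and pushing this back up gives $\mathcal{A}$ for $[0,1]^d$ — here one uses that $\mathcal{A}$ depending only on cubes below $Q$ is unaffected by conditioning on the rest).

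\medskip

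Next I would run the standard argument. Set $\theta=\PP(([0,1]^d,E)\text{ has }\mathcal{A}\mid\text{non-extinction})$; we want $\theta=1$. Condition on $\#E_N=k$ with the surviving cubes among them; by the computation in \eqref{eq:EL}, the number $Z_{nN}$ of surviving cubes in generation $nN$ is a $GW$-process with mean offspring $\EE(\widetilde L)=\EE(L_o)^N>1$, hence survives with positive probability. Condition on the event that $[0,1]^d$ survives and has $Z_N=k\ge 1$ surviving children $Q_1,\dots,Q_k\in\QQ_N$. By the observation above, if $([0,1]^d,E)$ fails $\mathcal{A}$ then each $(Q_i,E)$ fails $\mathcal{A}$; and the events ``$(Q_i,E)$ has $\mathcal{A}$'' restricted to the sub-processes rooted at $Q_i$ are independent (sub-cubes of different $Q_i$ are disjoint) and each $(Q_i,E)$, conditioned on $Q_i$ surviving, is a rescaled copy of $([0,1]^d,E)$ conditioned on non-extinction — so fails $\mathcal{A}$ with probability $1-\theta'$ where, since $\PP((Q_i,E)\text{ has }\mathcal{A}\mid Q_i\subset E_N)\ge c$ and hence $\mathcal{A}$ holds with conditional probability at least $c$, one gets the same $\theta$ (conditioning on survival only increases it). This yields
\[
  1-\theta \;=\; \sum_{k\ge 1}\PP(\widetilde L=k\mid\cdot)\,(1-\theta)^{k}\,,
\]
i.e. $1-\theta = f(1-\theta)$ where $f$ is the probability generating function of $\widetilde L$ conditioned appropriately. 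Since $\EE(\widetilde L)=\EE(L_o)^N>1$, the only fixed point of $f$ in $[0,1)$ is $0$ (the other being $1$), so $1-\theta\in\{0\}$ or $1-\theta=1$; the hypothesis $\PP(\mathcal{A}\mid Q\subset E_n)\ge c>0$ rules out $\theta=0$, forcing $\theta=1$.

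\medskip

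The main obstacle — and the step requiring the most care — is the very first one: justifying that when $([0,1]^d,E)$ fails $\mathcal{A}$, \emph{every} surviving cube in generation $nN$ also fails $\mathcal{A}$, simultaneously with the independence needed for the recursion. This is precisely where the two axioms of an admissible property do the work: the ``depends only on sub-cubes of $Q$'' clause gives both the independence across siblings and the fact that conditioning on the complementary randomness does not affect whether $(Q,E)$ has $\mathcal{A}$, while the ``inheritance under $\prec_N$'' clause propagates $\mathcal{A}$ upward. Once these are cleanly stated, the branching-process fixed-point computation is routine; one should also note that the choice of $N$ is irrelevant here (any $N\ge1$ works, since $\EE(L_o)^N>1$ whenever $\EE(L_o)>1$), and that the conditioning ``$\mid Q\subset E_n$'' in the hypothesis matches the conditioning used to define $\widetilde L$, so no compatibility issue arises.
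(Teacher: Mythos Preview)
Your approach via the fixed-point/inherited-property argument is genuinely different from the paper's, which is much more direct: the paper simply uses that $\#E_n\to\infty$ a.s.\ on non-extinction, picks a level $n$ where $\#E_n\ge M$, observes that the $M$ events ``$(Q,E)$ has $\mathcal{A}$'' are independent with probability $\ge c$ each, and lets $M\to\infty$. No recursion, no generating functions.

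Your argument, however, has two real gaps. First, the recursion $1-\theta=f(1-\theta)$ requires that $\PP((Q_i,E)\text{ fails }\mathcal{A}\mid Q_i\text{ survives})=1-\theta$ \emph{with the same $\theta$} for every surviving child $Q_i$. Nothing in the definition of an admissible property forces this: the $X_Q$ need not be identically distributed (only the $L_Q$ are), and $\mathcal{A}$ is allowed to depend on the cube $Q$ in a non-scale-invariant way. The hypothesis only gives you a uniform \emph{lower bound} $c$, not equality of these probabilities across cubes --- and a lower bound alone does not produce a closed recursion for $\theta$. (Also, the relation should be an inequality $1-\theta\le f(1-\theta)$, not an equality, since ``all children fail $\mathcal{A}$'' does not imply ``parent fails $\mathcal{A}$''; this is minor because combined with $f(x)\le x$ for $\widetilde L\ge1$ you would still land on a fixed point.)

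Second, and more seriously, your step ``the hypothesis rules out $\theta=0$'' is not justified. The hypothesis bounds $\PP((Q,E)\text{ has }\mathcal{A}\mid Q\subset E_n)$, which by independence equals the \emph{unconditional} $\PP((Q,E)\text{ has }\mathcal{A})$; it says nothing directly about the probability \emph{conditioned on survival}. If $\theta=0$ then $\PP(\mathcal{A})=\PP(\mathcal{A}\mid\text{extinction})\cdot q$, and there is no contradiction with $\PP(\mathcal{A})\ge c$ as long as $c\le q$. Your parenthetical ``conditioning on survival only increases it'' is a positive-correlation claim that is neither stated nor implied by admissibility. The paper's argument sidesteps both issues entirely by working with the uniform bound $c$ directly on many independent cubes, which is exactly what the hypothesis provides.
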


\begin{proof}
  According to a basic fact for $GW$-processes, $\#E_n\longrightarrow\infty$ a.s. on non-extinction (in fact, it grows exponentially fast, see \cite[\S 5]{LyonsPeres2014}). Given $M\in\N$, let $X$ be the random variable that equals the smallest natural number $n$ with $\#E_n\ge M$. Conditional on $X=n$, $E_n$ contains at least $M$ cubes $Q\in\QQ_n$, the estimate $\PP((Q,E)\text{ has property }\mathcal{A})\ge c$ holds for each of them, and the events $(Q,E)\text{ has property }\mathcal{A}$ are independent. Thus
  \[\PP\left(([0,1]^d,E)\text{ has property }\mathcal{A}\,|\,X=n \right)\ge 1-(1-c)^M\,.\]
  Since the events $X=n$ are disjoint for different values of $n$ and their union has full probability, we get the same estimate 
  \begin{align*}
    \PP\left(([0,1]^d,E)\text{ has property }\mathcal{A} \right)\ge 1-(1-c)^M\,,
  \end{align*}
  for the unconditional probability as well. But this holds for all $M\in\N$ and the claim follows.
\end{proof}

Throughout the rest of this section, we consider fractal percolation with parameters $M,d,p$ such that $p>M^{-d}$.
In particular with the above notation
\begin{equation*}
  \EE(\widetilde{L})=\EE(L_o)^N=(p M^d)^N\,.
\end{equation*}
Recall that $s=s(d,M,p)$ denotes the almost sure dimension of $E$ conditioned on non-extinction as defined in \eqref{eq:s}.

Before going to the dimension bounds for the exceptional sets for porosity, we give the proof for Theorem \ref{thm:E_upper_porous} providing the uniform bound $\upor(E,x) \geq c$ valid for all $x\in E$.

\begin{Retheorem}
  \label{thm:Re_E_upper_porous}
  For any $p > M^{-d}$, there exist $c=c(d,M,p)>0$, such that a.s. on non-extinction, $\underset{{x\in E}}{\inf}\upor(E,x)= c$. 
\end{Retheorem}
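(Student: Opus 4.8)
The plan is to prove the two halves of the statement separately: first the uniform lower bound $\upor(E,x)\ge c$ for all $x\in E$ (holding a.s.\ on non-extinction), and then the matching fact that this infimum is a.s.\ a constant, which will follow from the zero-one law in Lemma~\ref{lemma:zero-one}.

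For the uniform lower bound, I would work at a fixed scale $N$ (to be chosen) and exploit the fact that, conditioned on non-extinction, the surviving cubes form a $GW$-process $(Z_{nN})_n$ with offspring mean $(pM^d)^N$, which is huge for large $N$. The key geometric observation is: if $Q\in\QQ_{nN}$ is surviving, then among the $M^{dN}$ level-$N$ subcubes of $Q$, at least one must be \emph{empty} (contain no point of $E$) once $N$ is large, unless an atypically large fraction of subcubes survive. More precisely, I would set up a ``bad'' event: a surviving cube $Q$ is \emph{bad} if \emph{every} level-$N$ descendant that is even moderately far from $\partial Q$ is surviving (so there is no room for a hole of definite relative size inside $Q$). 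One shows $\PP(Q\text{ is bad}\mid Q\text{ surviving})\le \eta(N)$ with $\eta(N)\to 0$ --- indeed the number of surviving level-$N$ descendants is $\widetilde L$ with $\EE(\widetilde L)=(pM^d)^N$, but this is far smaller than $M^{dN}$ when $p<1$, so by a first-moment / Markov bound the probability that surviving descendants fill up a positive fraction of the interior subcubes is small. Then consider the subset $F_n\subset E_{nN}$ of cubes that survive but are \emph{not} bad removed, i.e.\ keep only surviving cubes all of whose surviving $N$-descendants avoid being bad; the count of such cubes is a $GW$-process whose offspring mean is at least $(1-\eta(N))(pM^d)^N>1$ for $N$ large, hence it survives with positive probability, and by Lemma~\ref{lemma:zero-one} (the non-bad-at-every-scale property is admissible) it in fact holds a.s.\ on non-extinction that \emph{every} point of $E$ lies in infinitely many cubes that are ``good''. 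At a good cube of side $M^{-nN}$ there is a level-$N$ subcube of side $M^{-(n+1)N}$ disjoint from $E$ and well inside, giving a ball of radius comparable to $c M^{-(n+1)N}$ in $B(x, M^{-nN})\setminus E$; letting $n\to\infty$ this yields $\upor(E,x)\ge c$ with $c=c(d,M,p)>0$ depending only on the chosen $N$ and the geometry.

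The main obstacle is the geometric bookkeeping in the previous step: I need to guarantee the empty subcube is far enough from $\partial Q$ that the inscribed ball actually lies inside $B(x,r)$ and genuinely misses $E$ (not just misses the surviving part --- recall $\partial Q'\cap E$ can be nonempty even when $Q'$ is not surviving, because of neighbouring cubes). This forces me to be careful to choose the empty subcube from the interior layer of $\QQ_N(Q)$ and to check that its neighbours are also empty, or alternatively to shrink the inscribed ball; quantifying the first-moment bound so that the resulting offspring mean stays above $1$ is the delicate point, and it is where the hypothesis $p<1$ (so $pM^d<M^d$, leaving room) is used, in addition to $p>M^{-d}$.

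Finally, for the constancy of the infimum, observe that for each rational $c>0$ the property ``$\upor(E,x)\ge c$ for all $x\in Q\cap E$'' (interpreted via the subprocess inside $Q$) is an admissible property in the sense defined before Lemma~\ref{lemma:zero-one}: it is determined by the $X_{Q'}$ for subcubes $Q'\subset Q$, and it is inherited upward. Hence by a zero-one argument the event $\{\inf_{x\in E}\upor(E,x)\ge c\}$ has conditional probability either $0$ or $1$ given non-extinction; letting $c$ range over the rationals and taking the supremum of those $c$ with conditional probability $1$ defines the constant value of the infimum. Combined with the positive lower bound from the first part, this gives $\inf_{x\in E}\upor(E,x)=c$ a.s.\ on non-extinction for some $c=c(d,M,p)>0$, completing the proof.
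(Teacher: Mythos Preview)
Your argument for the uniform lower bound has a genuine gap. You construct a $GW$-process $F_n$ of ``good'' cubes (those admitting a non-surviving level-$N$ descendant) and show its offspring mean is roughly $(1-\eta(N))(pM^d)^N>1$, so that $F=\bigcap_n F_n\neq\varnothing$ with positive probability. But this only shows that \emph{some} points of $E$ lie in good cubes at every scale; it does not show that \emph{every} $x\in E$ has $\upor(E,x)\ge c$. A point $x\in E\setminus F$ may have a single bad ancestor at some level and good ancestors at all other levels --- such a point is excluded from your $F$, yet still has large upper porosity. Conversely, nothing you have proved rules out the existence of points with bad ancestors at \emph{every} level. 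Your attempt to close this gap via Lemma~\ref{lemma:zero-one} fails: the property ``$\upor(E,x)\ge c$ for all $x\in E\cap Q$'' is not admissible, since if it holds for a small cube $Q$ it need not hold for a larger cube $Q'\supset Q$ (the larger cube has more points to check). Moreover, $\upor(E,x)$ depends on $E$ outside $Q$, so the property is not even determined by the $X_{Q'}$ for subcubes of $Q$.

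The paper's approach is the dual one, and it is what you need. Define $F_n$ to consist of cubes $Q\in\QQ_{nN}$ all of whose level-$N$ descendants are surviving (i.e.\ the \emph{bad} cubes), iterated down the tree. The offspring distribution is $M^{dN}\mathbf{1}[\widetilde L=M^{dN}]$, with mean $M^{dN}\PP(\widetilde L=M^{dN})$; since $\PP(\widetilde L=M^{dN})$ decays doubly exponentially in $N$, this mean is $<1$ for $N$ large, so $F=\varnothing$ almost surely. Now the set of points where $\por(E,x,\sqrt d M^{-kN})<\tfrac12 d^{-1/2}M^{-N}$ for all $k\ge n_0$ is contained in (the version of $F$ started at level $n_0$), hence empty, and this gives $\upor(E,x)\ge\tfrac12 d^{-1/2}M^{-N}$ for every $x\in E$. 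In short: to prove a property holds at \emph{all} points, you must show the exceptional set is empty via \emph{extinction}, not that a favourable set is nonempty via \emph{survival}.

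Your treatment of the constancy of the infimum is essentially correct in spirit, but note the paper applies the zero-one law to the admissible property $\inf_{x\in E\cap Q}\upor(E\cap Q,x)<\alpha$, which inherits from small cubes to large ones and is determined inside $Q$; your formulation with ``$\ge c$'' goes the wrong way and should be replaced by its complement.
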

\begin{proof}
  Our goal is to find $N=N(M,d,p)\in\N$ such that almost surely, the set 
  \[
    E^N_Q=\left\{x\in E\cap Q\,:\,\por(E,x,\sqrt{d}M^{-kN})<\frac12 d^{-1/2} M^{-N}\text{ for all }k\ge n_0\right\}
  \]
  is empty for all $Q\in\QQ_{n_0}$ and $n_0\in\N$. If this holds, then
  \[\left\{ x\in E \,:\, \upor(E,x)< \frac12d^{-1/2} M^{-N} \right\}\subset\bigcup_{n_0\in\N}\bigcup_{Q\in\mathcal{Q}_{n_0}} E_{Q}^N=\varnothing\,,\]
  and thus $\inf_{x\in E}\upor(E,x)\ge\tfrac12d^{-1/2}M^{-N}$.


  \begin{figure}[h]
    \begin{centering}
      \begin{tikzpicture}[scale=.7]
        \draw[step=2cm,draw=black!50] grid (8,8);
        \node[right] at (8,0) {$Q$};
        \draw[fill](0,0) circle (1pt) node [below] {$x$};
        \draw (8,8) arc (45:80:7.1) node[draw=none] (loppu) {};
        \draw (8,8) arc (45:30:7.1);
        \draw[->] (0,0) -- (8,8) node[midway,right] {$\sqrt{d} M^{-k N}$};
        \draw (3,5) circle (1cm);
        \node[right] at (3,5) {$Q'$};
        \draw (3,5) -- +(100:1) node[midway,left] (smallradius) {};
        \node (teksti) at (1,4.5) {$\frac{1}{2} M^{-k N -N}$}; 
        \draw[->] (teksti) to[bend left] (smallradius);

      \end{tikzpicture}
    \end{centering}
    \caption{If $x$ is NOT in $F$, it simply means that we can find arbitrary big $k$ such that not all of the subcubes $Q'\prec_N Q$ survive. This implies that we can find 
      a porosity hole of relative size $\frac{1}{2} d^{-1/2} M^{-N}$ at this scale and thus $x$ cannot lie in $E^N$.}
      \label{fig:theorem1}
    \end{figure}
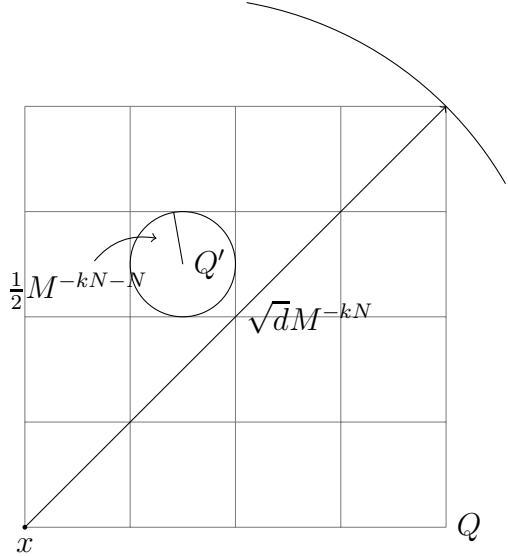
  Denote $E^N:=E^N_{[0,1]^d}$. We will show that $\PP(E^N=\varnothing)=1$ provided $N$ is chosen large enough.
  For a fixed $N\in\N$, we form the following random construction. Set $F_0=[0,1]^d$. Suppose $Q\subset\mathcal {Q}_{Nn}$, $Q\subset F_n$. To construct $F_{n+1}$, we select
  \begin{itemize}
    \item all the sub-cubes $Q'\prec_N Q$ in case all of these are surviving,
    \item and none of them otherwise.
  \end{itemize} Let $F_{n+1}$ be the union of all the selected sub-cubes of cubes $Q\in\mathcal{Q}_{Nn}$ forming $F_n$ and define $F=\cap_{n \in \N} F_n$.
  Now, conditional on $E\neq\varnothing$, the number of cubes in $\mathcal{Q}_{nN}$ forming $F_n$ gives rise to a $GW$-process with offspring distribution 
  $L=M^{dN}\textbf{1}[\widetilde{L}=M^{dN}]$. A simple calculation yields
  \[
    \PP\left(\widetilde{L}=M^{dN}\right)
    =\left( \left(1-q \right)p^{\frac{M^d}{M^d-1}}\right)^{M^{dN}-1},
  \] 
  so that if $N$ is large enough, then $\EE(L)=M^{dN}\PP(\widetilde{L}=M^{dN})<1$ implying (recall \eqref{eq:criticalp}) that $F=\varnothing$ almost surely. But it is clear from the definition of $E^N$ and $F$ that, $E^N\subset F$ always holds (see Figure \ref{fig:theorem1}) 
  so that $E^N=\varnothing$ almost surely. 

    Replacing $[0,1]^d$ by $Q\in\mathcal{Q}_{n_0}$ does not change anything in the argument so we deduce that a.s $E_{Q}^N=\varnothing$ for all $Q\in\mathcal{Q}$. Alternatively, it follows directly from the stochastic self-similarity of the fractal percolation process that $\PP(E_{Q}^N\neq\varnothing)=0$ if and only if $\PP(E^N\neq\varnothing)=0$.

    Finally, since $\underset{x \in E\cap Q}{\inf} \upor(E\cap Q,x)< \alpha$
    is admissible for any $\alpha >0$, the proof is finished by Lemma \ref{lemma:zero-one}.
  \end{proof}

  Let us briefly discuss a variant of Theorem \ref{thm:E_upper_porous} for annular (or spherical) upper porosity. Define the annular upper porosity of a set $A\subset\R^d$ at $x\in A$ as $\upor_a(A,x)=\limsup_{r\rightarrow 0}\por_a(A,x,r)$, where 
  \[\por_a(A,x,r)=\sup\{0\le \varrho<1\,:\,A\cap B(x,r)\setminus B(x,(1-\varrho) r)=\varnothing\}\,.\]
  This is a quantitative notion of total disconnectedness, with applications  in geometric analysis, see e.g. \cite{Tuominen2004}.

  \begin{proposition}\label{prop:annular} 
    There is $p_c>M^{-d}$ such that for all $p \in (M^{-d}, p_c)$, there exist $\kappa=\kappa(d,M,p)>0$, such that a.s. on non-extinction
    $\inf_{x \in E}\upor_a(E,x)= \kappa$. 
  \end{proposition}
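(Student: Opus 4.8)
Writing up a plan for Proposition~\ref{prop:annular}.

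The statement contains two assertions: that $\inf_{x\in E}\upor_a(E,x)$ is, conditional on non-extinction, a.s.\ equal to a deterministic constant, and that for $p<p_c$ this constant is strictly positive. The first is obtained exactly as the constancy part of Theorem~\ref{thm:E_upper_porous}: for every $\alpha>0$ the property ``$\inf_{x\in E\cap Q}\upor_a(E\cap Q,x)<\alpha$'' is admissible, so by Lemma~\ref{lemma:zero-one} it has probability $0$ or $1$ conditional on non-extinction, and letting $\alpha$ run over a sequence decreasing to the a.s.\ value $\kappa$ of $\inf_{x\in E}\upor_a(E,x)$ identifies the constant. Hence the entire content is to prove $\kappa>0$ when $p<p_c$, and I would first explain why a restriction on $p$ is unavoidable: if $E$ contains a connected component $C$ with $\diam C>0$, then for every $x\in C$ and every $r<\diam C$ the connectedness of $C$ forces $C\cap\partial B(x,\rho)\neq\varnothing$ for all $\rho\in[0,r]$, so $\por_a(E,x,r)=0$ and thus $\upor_a(E,x)=0$ and the infimum is $0$. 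Therefore $p_c$ can be at most the critical probability for the occurrence of nondegenerate connected components of $E$, and I would take $p_c=p_c(d,M)$ to be (at most) that value, so that for $p<p_c$ the percolation is \emph{subcritical} --- in particular crossing probabilities of the discretisations $E_N$ of a selected cube tend to $0$, and, more to the point, connection probabilities across $N$ nested scales decay in $N$ (cf.\ the hierarchical percolation estimates in \cite{ChayesChayesDurrett1988}).

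The argument for $\kappa>0$ then follows the scheme of Theorem~\ref{thm:E_upper_porous}. Fix $N$ large and a small $\kappa=\kappa(d,M)>0$, and for $Q\in\QQ_{nN}$ call $Q$ \emph{bad} if there is $y\in E\cap Q$ which is ``$\kappa$-surrounded at all intermediate scales of $Q$'', i.e.\ $E$ meets the annulus $B(y,r)\setminus B(y,(1-\kappa)r)$ for every $M$-adic scale $r$ with $M^{-(n+1)N}\le r\le\tfrac1{10}\ell(Q)$, all these annuli being required to lie in the $3^d$ block of $\QQ_{nN}$-cubes around $Q$; otherwise $Q$ is \emph{good}. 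Two facts have to be established. \emph{(i) Geometric reduction:} if $x\in E$ has $\upor_a(E,x)<\kappa$ then for every sufficiently small scale $r$ one has $E\cap(B(x,r)\setminus B(x,(1-\kappa)r))\neq\varnothing$, hence for all large $n$ the cube of $\QQ_{nN}$ containing $x$ is bad; consequently $\{x\in E:\upor_a(E,x)<\kappa\}$ is contained in the set of $x$ whose $\QQ_{nN}$-cubes are bad \emph{and} surviving for all large $n$. \emph{(ii) Probabilistic estimate:} for $p<p_c$,
\[
  \PP\bigl(Q\text{ is bad}\,\bigm|\,Q\text{ surviving}\bigr)\le\lambda^{N}
\]
for some $\lambda=\lambda(d,M,p)$ with $\lambda<(M^{d}p)^{-1}$; this is where subcriticality and the decay of multi-scale connection probabilities enter, and it is what forces $p<p_c$.

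Granting (i) and (ii), I would run the branching construction as in Theorem~\ref{thm:E_upper_porous}: set $F_0=[0,1]^d$ and, from a good surviving cube $Q\subset F_n$ in $\QQ_{nN}$, let $F_{n+1}$ collect those $Q'\prec_N Q$ that are bad and surviving, and put $F=\bigcap_n F_n$. Conditional on non-extinction, $\#F_n$ is (dominated by) a Galton--Watson process whose mean offspring is at most $M^{dN}\cdot p^{N}\cdot\lambda^{N}=(M^{d}p\lambda)^{N}<1$, using that, given the parent is selected, a given child is surviving with probability $p^{N}$ and then bad with probability $\le\lambda^{N}$; hence $F=\varnothing$ a.s.\ by \eqref{eq:criticalp}. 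By (i) the exceptional set $\{x\in E:\upor_a(E,x)<\kappa\}$ is contained in $\bigcup_{n_0}\bigcap_{n\ge n_0}F_n$, which is a.s.\ empty (run the construction starting from each $Q\in\QQ_{n_0N}$, or invoke Lemma~\ref{lemma:zero-one} again). Thus $\upor_a(E,x)\ge\kappa$ for all $x\in E$ a.s.\ on non-extinction, and together with the constancy this gives $\inf_{x\in E}\upor_a(E,x)=\kappa$ for a constant $\kappa=\kappa(d,M,p)>0$.

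The main obstacle is the probabilistic estimate (ii): extracting from subcriticality a bound of the form $\PP(Q\text{ bad}\mid Q\text{ surviving})\le\lambda^N$ with $\lambda$ small enough to beat the combinatorial factor $M^{dN}$ --- equivalently, that an $E$-point being $\kappa$-surrounded across $N$ consecutive scales is exponentially unlikely in $N$ at the right rate once $p$ is below the relevant threshold. A secondary, purely technical, obstacle is the boundary/corner bookkeeping in (i) and in the definition of ``bad'' (an $E$-point near $\partial Q$, and the countably many $M$-adic boundary points), which is handled by the same devices as in the proof of Theorem~\ref{thm:E_upper_porous} --- allowing the witnessing annuli to lie in a bounded block of cubes around $Q$, and, where convenient, passing to a slightly coarser cube.
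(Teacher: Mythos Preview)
Your constancy argument via Lemma~\ref{lemma:zero-one} is exactly what the paper does, and your observation that nondegenerate connected components force $\upor_a=0$ is correct and useful context. But for the positivity of $\kappa$ you are taking a much harder route than necessary, and the route has real gaps.

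The paper's argument is elementary and needs no percolation input at all. Fix $N$ with $M^N>3+\sqrt d$ and run the same branching scheme as in Theorem~\ref{thm:E_upper_porous}, with one change: from a surviving $Q\in\QQ_{nN}$ keep \emph{all} surviving $Q'\prec_N Q$, \emph{except} that if there is exactly one surviving $Q'$ and it lies well inside $Q$ (say $\dist(Q',\partial Q)>(1+\sqrt d)M^{-(n+1)N}$), discard it. The point is geometric: if the unique surviving $Q'$ is interior, then for $x\in Q'$ the ball $B(x,(1+\sqrt d)M^{-(n+1)N})$ is contained in $Q$ while $E\cap Q\subset Q'\subset B(x,\sqrt d\,M^{-(n+1)N})$, so the annulus in between is empty and $\por_a(E,x,r)\ge 1/(1+\sqrt d)$ at that scale. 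Thus $\{x:\upor_a(E,x)<1/(1+\sqrt d)\}\subset F:=\bigcap_n F_n$. The offspring distribution of $\#F_n$ has mean $\EE(\widetilde L)-p_b\,\PP(\widetilde L=1)$, where $p_b$ is the (purely geometric, close to $1$) proportion of interior sub-cubes. As $p\downarrow M^{-d}$ one has $\EE(\widetilde L)=(pM^d)^N\to 1$ and, since $\widetilde L\ge 1$ with $\EE(\widetilde L)\to 1$, also $\PP(\widetilde L=1)\to 1$; hence $\EE(L)\to 1-p_b<1$ and $F=\varnothing$ a.s. That is the whole proof: near criticality the conditioned tree is essentially a single ray, and a single interior child produces an annular hole inside $Q$.

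Your scheme instead tries to control, uniformly over all intermediate scales, the event that $x$ is $\kappa$-surrounded, and to bound this by subcritical connection/crossing estimates. Two problems. First, your key estimate~(ii), the exponential bound $\PP(Q\text{ bad}\mid Q\text{ surviving})\le\lambda^N$ with $\lambda<(M^dp)^{-1}$, is precisely the substantive content and you have not proved it; the references you cite give decay of crossing probabilities, but converting that into the needed annular-surrounding bound at the right rate is nontrivial. Second, because your ``bad'' event looks into the $3^d$ block of neighbouring $\QQ_{nN}$-cubes, the selection rule for a child is \emph{not} determined by its own subtree, so $\#F_n$ is not a Galton--Watson process and the claimed domination needs an argument you have not supplied. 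Both issues evaporate in the paper's approach because the witnessing annulus is chosen to sit entirely inside the current cube (this is where ``$Q'$ interior'' is used), so no neighbour information is needed and the GW structure is preserved. Since only an upper-porosity (limsup) bound is required, finding one such interior scale along the ray suffices; there is no need to control all scales simultaneously.
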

  \begin{proof}
    The proof goes along the lines of the proof of Theorem \ref{thm:E_upper_porous}, so we just sketch the idea. Fix $N\in\N$ with $M^N>3+\sqrt{d}$. The main difference to the proof of Theorem \ref{thm:E_upper_porous} is that in constructing $F_{n+1}$, for $Q\in\mathcal{Q}_{nN}$, $Q\subset F_n$ we select
    \begin{itemize}
      \item all the surviving cubes $Q'\prec_N Q$,
      \item except in the case that there is only one such surviving cube $Q'\prec_N Q$ and it satisfies $\dist(Q',\partial Q)>(1+\sqrt{d})M^{-(n+1)N}$, then we do not select it.
    \end{itemize}
    Again, $\#F_n$ is a $GW$-process and the offspring distribution is $L$ with 
    \begin{align*}
      \PP(L=k) =
      \begin{cases}
        \PP(\widetilde{L}=k) \textrm{  when } k\geq 2, \\
        p_b  \textrm{ when } k=1,
      \end{cases}
    \end{align*}
    where $p_b$ is the probability that a uniformly chosen sub-cube of $\mathcal{Q}_{N}$ has distance at least $(1+\sqrt{d})M^{-N}$ to the boundary of $[0,1]^d$. The expectation of $L$ is 
    \begin{align*}\label{expectation}
      \EE(L) &=
      \EE(\widetilde{L})-\PP(\widetilde{L}=1)+p_b\\
      &=(pM^d)^N-(pM^d)^N(1-p+pq)^{(M^d-1)N} +p_b
    \end{align*}
    where $q$ is the probability of extinction. Note that $q \rightarrow 1, p_b \rightarrow 0$ when $p\rightarrow M^{-d}$. It follows that $\EE(L)\rightarrow 0$ when $p \rightarrow M^{-d}$. Thus, for some $p_c>M^{-d}$, we have $\EE(L)<1$ if $p<p_c$ so that $F=\cap_{n\in\N} F_n=\varnothing$ almost surely.

    Since $\underset{x \in E\cap Q}{\inf} \upor_a(E\cap Q,x)< \alpha$
    is admissible for any $\alpha >0$, the claim follows by Lemma \ref{lemma:zero-one}.
  \end{proof}

  \begin{remarks}
    \label{rem:annularremark}

    Recall that a classical result for $d=2$ is the existence of a critical parameter $\widetilde{p}_c=\widetilde{p}_c(M,d)$ such that for $p>\widetilde{p}_c$, there is a positive probability for the existence of left to right crossing path in $E$ while for $p<\widetilde{p}_c$, the limit set $E$ is almost surely totally disconnected (see e.g. \cite{ChayesChayesDurrett1988, Falconer2003, BromanCamia2010}). Proposition \ref{prop:annular} may be seen as a refinement of the latter claim providing a quantitative bound for the total disconnectedness when $p<p_c$. Note that trivially $p_c\le\widetilde{p}_c$. 
  \end{remarks}

  We now turn to the main results dealing with the dimension bounds for the exceptional points where $\upor(E,x)<\tfrac12$ or $\lpor(E,x)>0$.

  \begin{Retheorem}
    \label{thm:Re_upor_dimbound}
    For any $p > M^{-d}$ and $\varepsilon>0$ there exist $\delta =\delta(d,M,p,\varepsilon)>0$, such that a.s.
    \[
      \dim_H\{x\in E: \upor(E,x) < 1/2-\varepsilon\} < s -\delta\,.
    \]
  \end{Retheorem}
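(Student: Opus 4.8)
The plan is to mimic the strategy of Theorem~\ref{thm:Re_E_upper_porous}, but now instead of forcing the exceptional set to be empty, I aim to trap it inside a random set $F=\cap_n F_n$ whose cube-count is a $GW$-process of mean offspring strictly less than $\EE(\widetilde L)=(pM^d)^N$, so that \eqref{eq:dim_formula} gives $\dim_H F<s$ via \eqref{eq:EL}. The key quantitative input is: if $\upor(E,x)<1/2-\varepsilon$, then at \emph{every} sufficiently small scale $r$ there is no porosity hole of relative size $1/2-\varepsilon$ inside $B(x,r)$; translating this to the $M$-adic grid at level $nN$, it forces the collection of surviving subcubes $Q'\prec_N Q$ (where $Q\ni x$, $Q\in\QQ_{nN}$) to be ``spread out'' — they cannot leave a sub-block of relative side roughly $1/2-\varepsilon$ (up to the $\sqrt d$ discretization loss) empty. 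So I would first fix $N$ large, and for $Q\in\QQ_{nN}$ with $Q\subset F_n$ declare $Q$ \emph{good} if the set of surviving $Q'\prec_N Q$ meets every $M$-adic sub-block of $Q$ of side-length $\lceil (1/2-\varepsilon/2)M^N\rceil M^{-(n+1)N}$ (a finite, explicitly checkable condition depending only on which of the $M^{dN}$ subcubes survive); then $F_{n+1}$ is the union of all surviving $Q'\prec_N Q$ over good $Q\subset F_n$, and of nothing over bad $Q$. By construction, any $x$ with $\upor(E,x)<1/2-\varepsilon$ has all of its level-$nN$ cubes good for $n$ large (choosing $N$ so that $\sqrt d M^{-N}<\varepsilon/10$, say), hence $x\in F$ up to discarding finitely many initial generations; a standard countable union over starting cubes $Q\in\QQ_{n_0}$ absorbs that, exactly as in the proof of Theorem~\ref{thm:Re_E_upper_porous}.

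Next I would verify that $\#F_n$ is a $GW$-process: conditional on non-extinction and on $Q\subset F_n$, the surviving subcubes $Q'\prec_N Q$ are distributed as $\{Q'\in\QQ_N:Q'\text{ surviving}\}$ independently across the cubes of generation $nN$, so the offspring variable is $L=\#\{Q'\prec_N Q\text{ surviving}\}\cdot\mathbf 1[Q\text{ is good}]$, whose law is that of $\widetilde L\cdot\mathbf 1[\text{the surviving pattern is good}]$. Therefore
\[
  \EE(L)=\EE(\widetilde L)-\EE\bigl(\widetilde L\,;\,\text{bad}\bigr)=(pM^d)^N-\EE\bigl(\widetilde L\,\mathbf 1[\text{bad}]\bigr)\,.
\]
The crux is then to show $\EE(L)<(pM^d)^{N}M^{-N\eta}$ for some $\eta=\eta(\varepsilon)>0$ and all large $N$; equivalently, that the ``bad'' correction captures a non-negligible proportion of the mean. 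For this I would lower-bound $\PP(Q\text{ is bad})$: fix one particular sub-block $B$ of side $\approx(1/2-\varepsilon/2)M^N$ (so $B$ contains $\asymp(1/2-\varepsilon/2)^dM^{dN}$ of the subcubes); the event that \emph{no} subcube inside $B$ survives has probability $\ge (q')^{(1/2)^dM^{dN}}$ where $q'=\PP(Q'\subset E_N)\PP(\text{non-surviving}\mid Q'\subset E_N)+\PP(Q'\not\subset E_N)<1$ is the per-subcube non-survival probability, uniformly bounded below away from $0$ for fixed $p,M,d,N$... but this is exponentially small in $M^{dN}$, which is far too small to beat $(pM^d)^{N}$. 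So the naive bound fails, and the real argument must be more delicate: rather than forcing an entire block empty, I should show that being good is itself a \emph{costly constraint relative to the conditioned measure} $\widetilde L$ — i.e. a large-deviations statement that a $\widetilde L$-distributed surviving pattern, conditioned to have $k$ points, is with overwhelming probability (in $k$, and we are on the event $k\approx(pM^d)^N$) \emph{not} spread out enough to hit every block, because the conditional law of surviving cubes, while not i.i.d., still has enough independence/negative association that the surviving set concentrates rather than equidistributing across all $\asymp(1/2-\varepsilon)^{-d}$ disjoint blocks simultaneously. Concretely, I would partition $[0,1]^d$ (the parent $Q$) into $\asymp\varepsilon^{-d}$ disjoint $M$-adic sub-blocks each of side $\approx\varepsilon M^{-(n+1)N}\cdot M^{?}$... and argue that the number of surviving subcubes in each sub-block is an independent (across sub-blocks) binomial-type variable, so the probability that \emph{all} of them are nonempty, against the probability that at least one is empty, yields a uniform-in-$N$ multiplicative deficit $\EE(L)\le(1-c_0)\,(pM^d)^N$ with $c_0=c_0(\varepsilon,M,d)>0$ — and then enlarging $N$ further does not destroy this because the deficit is already multiplicative at the base level $N$; iterating the block decomposition across a few scales inside the $N$-block gives the needed $\delta$.

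The main obstacle, as the above makes clear, is precisely this last point: producing a \emph{uniform-in-$N$} multiplicative gap $\EE(L)\le(1-c_0)(pM^d)^N$. The difficulty is that ``good'' is a weak constraint (hitting every one of boundedly many blocks), and the conditioned offspring $\widetilde L$ has a complicated law, so one cannot simply quote a clean i.i.d. large-deviations estimate; one has to exploit that, at the intermediate scale of a single block, the surviving-cube count is again a $GW$-quantity with mean $(pM^d)^{N/2}\gg1$ yet with a genuinely positive probability of being zero (this is where $p>M^{-d}$, hence $q<1$ but $q>0$, is used), and that these block-counts are mutually independent because distinct blocks descend from disjoint cubes. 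Once that independence-at-the-block-scale is set up, the rest — pushing the estimate through Lemma stating $\#F_n$ is $GW$, applying \eqref{eq:criticalp}/\eqref{eq:dim_formula}, and the countable-union cleanup — is routine and parallels the proof of Theorem~\ref{thm:Re_E_upper_porous} already given.
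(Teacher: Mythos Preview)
Your overall framework is right --- trap the exceptional set inside $F=\cap_n F_n$ where $\#F_n$ is a $GW$-process with mean strictly below $\EE(\widetilde L)=(pM^d)^N$, then apply \eqref{eq:dim_formula} --- but you have over-engineered the ``good'' condition and thereby created the obstacle you are struggling with.

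The simplification you are missing is this: if $Q\in\QQ_{nN}$ has \emph{exactly one} surviving subcube $Q'\prec_N Q$, then for any $x\in E_\varepsilon\cap Q\subset Q'$ one already has
\[
\por\!\left(E,x,\tfrac12 M^{-nN}\right)>\tfrac12-\sqrt d\,M^{-N}>\tfrac12-\varepsilon,
\]
simply because inside $Q$ the set $E$ is confined to the tiny cube $Q'$ of side $M^{-(n+1)N}$. So once $N$ is fixed with $\sqrt d\,M^{-N}<\varepsilon$, the containment $E_\varepsilon\subset F$ holds for the rule ``select all surviving $Q'\prec_N Q$ unless there is exactly one, in which case select none''. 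The offspring is then $L=\widetilde L\,\mathbf 1[\widetilde L\ge 2]$, and
\[
\EE(L)=\EE(\widetilde L)-\PP(\widetilde L=1).
\]
For homogeneous fractal percolation $\PP(\widetilde L=1)>0$ is immediate (a single cube survives to level $N$ and all its siblings die along the way, which has positive probability since $0<p<1$). Hence $\EE(L)<\EE(\widetilde L)$, and \eqref{eq:dim_formula} gives $\dim_H F\le s-\delta$ with $\delta>0$ depending on this \emph{fixed} $N$. No large-deviations input, no block decomposition, no asymptotics in $N$ are needed.

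Your stronger target $\EE(L)<(pM^d)^N M^{-N\eta}$ uniformly in large $N$ would indeed be delicate for your ``hit every large sub-block'' notion of good (the bad event has probability exponentially small in $M^{dN}$, as you noticed), but it is not required by the theorem: $N$ is chosen once from $\varepsilon$ and then held fixed, so even the weaker $(1-c_0)$-type gap you mention would already suffice. The ``main obstacle'' you identify is entirely an artifact of imposing a constraint on the surviving pattern that is far more restrictive than the geometry demands; relaxing ``good'' to ``at least two surviving subcubes'' makes the whole estimate a one-line computation.
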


  \begin{proof}
    Choose $N=N(\varepsilon)\in\N$ such that $\sqrt{d} M^{-N}<\varepsilon\leq \sqrt{d} M^{-N+1}$. 
    Since we are looking for an upperbound for the dimension of the set where the upper porosity is less that $1/2-\eps$, it suffices to estimate the dimension of a set where this porosity occurs among certain fixed scales. With this in mind, we set up the following notation. For each $n_0\in\N$ and $Q\in\QQ_{n_0 N}$, denote 
    \[
      E_{\varepsilon, Q}=\left\{x\in E\cap Q\,:\,\por\left(E,x,\frac12 M^{-kN}\right)<\frac12-\varepsilon\text{ for all }k\ge n_0\right\}\,.
    \]
    Then $\{x\in E\,|\,\upor(E,x)<\tfrac12-\varepsilon\}
    \subset\cup_{n_0\ge 0}\cup_{Q\in\mathcal{Q}_{n_0 N}}E_{\varepsilon,Q}$ and it suffices to estimate the dimension of each $E_{\varepsilon,Q}$. Denote $E_\varepsilon=E_{\varepsilon, [0,1]^d}$.
    Again, we only show the estimate for $E_{\varepsilon}$ since the general case is similar.
    To that end, we construct a random sequence $F_n\subset E_{nN}\subset [0,1]^d$ as follows: Let $F_0=E_0=[0,1]^d$. Suppose $Q\subset\mathcal {Q}_{Nn}$, $Q\subset F_n$. To construct $F_{n+1}$, we select
    \begin{itemize}
      \item all the surviving cubes $Q'\prec_N Q$,
      \item except that in the case when there is only one such surviving sub-cube of $Q$, then we do not select it.
    \end{itemize}
    Let $F_{n+1}$ be the union of all the selected sub-cubes of cubes $Q\in\mathcal{Q}_{Nn}$ forming $F_n$. 

    Suppose $Q\in\mathcal{Q}_{nN}$ such that $E_\varepsilon\cap Q\neq\varnothing$. Then $Q$ contains at least two disjoint surviving sub-cubes in $\mathcal{Q}_{(n+1)N}$. Indeed, if there was only one cube $Q'\prec_N Q$ surviving, we would have (see Figure \ref{figure2})
    \[
      \por(E, x, \frac{1}{2}M^{-nN})> \frac{1}{2}-\sqrt{d} M^{-N}>\frac12-\varepsilon\,,
    \]
    for $x\in E_\varepsilon\cap Q=E_\varepsilon\cap Q'$ contradicting the definition of $E_\varepsilon$.
    In other words, this means that $E_\varepsilon\subset F$.
    \begin{figure}
      \centering 
      \resizebox{0.4\textwidth}{!}{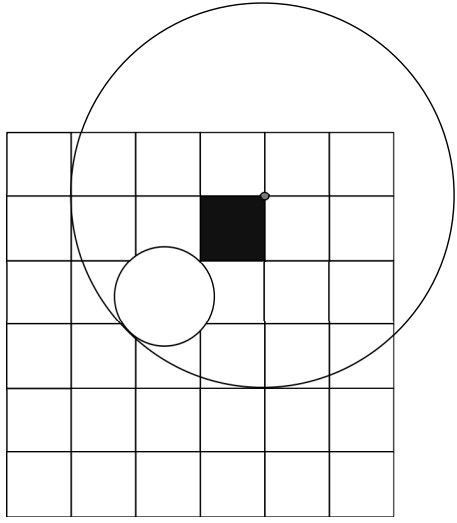}
      \caption{If there is only one cube of level $nN$ surviving, then the porosity at scale $\frac{1}{2}M^{-nN}$ is at least $\frac{1}{2}-\sqrt{d} M^{-N}$.}
      \label{figure2}
    \end{figure}

    To finish the proof, we observe that the construction of $F_n$ fits into the general framework explained in Section \ref{sub:rf}  (where we replace $M$ by $M^N$ throughout) with $L=\widetilde{L}\textbf{1}[\widetilde{L}\ge 2]$. Now $\PP(\widetilde{L}=1)>0$ and $\EE(L)=\EE(\widetilde{L})-\PP(\widetilde{L}=1)$. If $\EE(L)\le 1$, we have $E_\varepsilon\subset F=\varnothing$ almost surely, while if $\EE(L)>1$, then \eqref{eq:dim_formula} yields
    the almost sure estimate
    \[\dim_H(E_\varepsilon)\le\dim_H(F)\le\frac{\log\EE(L)}{\log(M^N)}=\frac{\log\EE(\widetilde{L})}{N\log M}-\delta=s-\delta\,,\]
    for some $\delta=\delta(\varepsilon)>0$. 
  \end{proof}

  \begin{remark}
    We note that a small variant of the above proof yields also the bound
    \[
      \dim_H\{x\in E: \upor_a(E,x) < 1-\varepsilon\} < s -\delta
    \]
    for all $\varepsilon>0$ for the annular porosity considered in Proposition \ref{prop:annular} above. This is obtained by choosing an additional parameter $K=K(\varepsilon)<\infty$ (and $N$ large enough depending on $K$) and changing the definition of $F_{n+1}$ so that we select 
    \begin{itemize}
      \item all the surviving cubes $Q'\prec_N Q$,
      \item except that in the case when there is only one surviving sub-cube of $Q$ and it satisfies $d(Q,\partial Q')>K\sqrt{d}M^{-(n+1)N}$, then we do not select it.
    \end{itemize}
    This will yield the required bound in the same way as in the proof of Theorem \ref{thm:Re_upor_dimbound} above. In particular, we obtain a.s. that $\upor_a(E,x)=1$ for $\mu$-almost all $x\in E$. 
    As being in a connected component implies annular porosity being zero, in particular less than $1- \varepsilon$, we also recover the result of  \cite{BCJM} 
    showing that a.s. the union of all nontrivial connected components of $E$ has Hausdorff dimension $<s-\delta$, for some $\delta=\delta(d,M,p)>0$. 
  \end{remark}

  \begin{Retheorem}

    \label{thm:Re_lpor_dimubound}
    For any $p > M^{-d}$, $\varepsilon>0$ there exist $\delta =\delta(d,M,p,\varepsilon)>0$, such that a.s.
    \[
      \dim_H\{x\in E: \lpor(E,x) > \varepsilon\} < s -\delta.
    \]
  \end{Retheorem}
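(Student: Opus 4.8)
The plan is to follow closely the scheme of the proof of Theorem~\ref{thm:Re_upor_dimbound}: first reduce the statement to a countable family of sets attached to a fixed geometric sequence of scales, then exhibit each of these sets inside the limit set $F=\bigcap_n F_n$ of an auxiliary construction of the type in Section~\ref{sub:rf} (with $M$ replaced by $M^N$), and finally check that the offspring law $L$ of $\#F_n$ has $\EE(L)<\EE(\widetilde L)=(pM^d)^N$, so that \eqref{eq:criticalp} and \eqref{eq:dim_formula} yield $\dim_H F\le\log\EE(L)/(N\log M)<s$.

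Concretely, I would fix $N=N(d,M,\varepsilon)$ so large that $w:=\lceil\sqrt d/\varepsilon\rceil<M^N/2$, put $r_n=\tfrac{\sqrt d}{\varepsilon}M^{-(n+1)N}$, and observe that, since $\lpor(E,x)>\varepsilon$ forces $\por(E,x,r)>\varepsilon$ for all sufficiently small $r$, one has $\{x\in E:\lpor(E,x)>\varepsilon\}\subset\bigcup_{n_0\in\N}A_{n_0}$ with $A_{n_0}=\{x\in E:\por(E,x,r)>\varepsilon\text{ for all }r\le r_{n_0}\}$; it then suffices to bound $\dim_H A_{n_0}$, and (by stochastic self-similarity, exactly as before) it is enough to do this for $A_{n_0}$ replaced by its intersection with a single cube of $\mathcal{Q}_{n_0N}$. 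The geometric heart of the matter is the following dichotomy. Let $x\in A_{n_0}$ not lie on any $M$-adic coordinate hyperplane, let $n\ge n_0$, let $Q=Q_n(x)\in\mathcal{Q}_{nN}$ be the cube containing $x$ and $Q'=Q_{n+1}(x)\prec_N Q$ its child containing $x$. If $Q'$ lies outside the $w$ outermost layers of the $M^N\times\cdots\times M^N$ subdivision of $Q$, then $\dist(x,\partial Q)\ge wM^{-(n+1)N}\ge r_n$, hence $B(x,r_n)\subset Q$; as $\por(E,x,r_n)>\varepsilon$ there is a ball $B(y,\varepsilon r_n)\subset B(x,r_n)\setminus E\subset Q\setminus E$ of radius $\varepsilon r_n=\sqrt d\,M^{-(n+1)N}$, which therefore contains a cube $D\in\mathcal{Q}_{(n+1)N}$, $D\subset Q$, with $D\cap E=\varnothing$, so that $D$ is non-surviving. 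Thus for such $x$ and every $n\ge n_0$: either $Q_n(x)$ has a non-surviving child, or $Q_{n+1}(x)$ lies in the $w$ outermost layers of $Q_n(x)$.

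This dichotomy dictates the construction. Starting from a surviving cube of $\mathcal{Q}_{n_0N}$, from a surviving $Q\in\mathcal{Q}_{nN}$ already retained put into $F_{n+1}$ all surviving children $Q'\prec_N Q$ if $Q$ has at least one non-surviving child, and, in the remaining case $\widetilde L(Q)=M^{dN}$, put in only those (necessarily surviving) children of $Q$ that lie in its $w$ outermost layers. By the dichotomy, $A_{n_0}$ — minus the $E$-points on $M$-adic hyperplanes, treated below — is contained in $F$. Since the number of offspring of $Q$ is a fixed function of $\widetilde L(Q)$ and of the (deterministic) layer pattern, $\#F_n$ is, conditionally on non-extinction of the corresponding $GW$-process, a $GW$-process with offspring law $L=\widetilde L\,\mathbf{1}[\widetilde L<M^{dN}]+b\,\mathbf{1}[\widetilde L=M^{dN}]$, where $b=M^{dN}-(M^N-2w)^d$; recalling that $\EE(\widetilde L)=(pM^d)^N$ (see \eqref{eq:EL}),
\[
  \EE(L)=\EE(\widetilde L)-(M^N-2w)^d\,\PP(\widetilde L=M^{dN})<\EE(\widetilde L)=(pM^d)^N,
\]
the inequality strict since $M^N>2w$ and $\PP(\widetilde L=M^{dN})>0$ (by the computation in the proof of Theorem~\ref{thm:Re_E_upper_porous}). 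Hence either $\EE(L)\le1$ and $F=\varnothing$ a.s., or \eqref{eq:dim_formula} gives $\dim_H F\le\log\EE(L)/(N\log M)<s$; in either case $\dim_H A_{n_0}\le\beta$ a.s.\ for an explicit $\beta=\beta(d,M,p,\varepsilon)<s$, and one takes $\delta=(s-\beta)/2$. The leftover points, those of $E$ on some hyperplane $H=\{x_i=kM^{-m}\}$, are handled directly: for $l\ge m$ there are $2M^{(d-1)l}$ cubes of $\mathcal{Q}_l$ abutting $H$, each contained in $E_l$ with probability $p^l$, so $\EE\bigl[\#\{Q\in\mathcal{Q}_l:Q\subset E_l,\ Q\cap H\ne\varnothing\}\bigr]=2M^{(d-1)l}p^l$, and a standard covering/Borel--Cantelli argument gives $\dim_H(E\cap H)\le\max(0,s-1)<s$ a.s.; the countable union over all $H$ does not increase this.

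The step I expect to be the main obstacle is precisely this geometric one together with its bookkeeping: the porosity hole at scale $\sim M^{-nN}$ need not sit inside $Q_n(x)$ but only inside a bounded neighbourhood of it, which is why the construction must, as a safeguard, keep the children of $Q$ touching $\partial Q$ whenever all children survive, and why the $E$-points on $M$-adic hyperplanes have to be peeled off separately; making these two ingredients coexist with a genuine $GW$-process is the part needing care. As an alternative one could carry the $3^d$-block of neighbours of $Q$ through the whole argument; this still yields $\EE[\#F_{n+1}\mid\mathcal{B}_{(n+1)N}]\le\theta\,\#F_n$ with a constant $\theta<(pM^d)^N$, hence the same dimension bound, but then $\#F_n$ is no longer exactly a $GW$-process.
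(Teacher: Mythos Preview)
Your argument is correct, but it is more elaborate than what the paper does. The paper's proof inverts the geometric observation: rather than saying ``if the porosity at scale $\sim M^{-nN}$ is large and $x$ is deep in $Q$, then some child of $Q$ is non-surviving'', it notes directly that if $x$ lies in the \emph{centre} sub-cube $Q'\prec_N Q$ and \emph{all} children of $Q$ survive, then $B(x,\tfrac13 M^{-nN})\subset Q$ and meets every child, so $\por(E,x,\tfrac13 M^{-nN})\le 3\sqrt d\,M^{-N}$. Consequently, in the construction of $F_{n+1}$ the paper simply keeps all surviving children when $\widetilde L<M^{dN}$, and keeps all but the one centre cube when $\widetilde L=M^{dN}$. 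This gives the offspring law $L=\widetilde L\,\mathbf 1[\widetilde L<M^{dN}]+(M^{dN}-1)\mathbf 1[\widetilde L=M^{dN}]$, whence $\EE(L)=\EE(\widetilde L)-\PP(\widetilde L=M^{dN})$, and no separate treatment of $M$-adic hyperplanes is required.

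Your route keeps the whole $w$-layer boundary shell in the full case and throws away the inner $(M^N-2w)^d$ children, which yields the larger gap $\EE(\widetilde L)-\EE(L)=(M^N-2w)^d\,\PP(\widetilde L=M^{dN})$ and hence a nominally better $\delta$, at the price of the extra bookkeeping you flagged. Incidentally, the hyperplane peel-off is not really needed even in your version: with half-open $M$-adic cubes every $x$ has a unique $Q_{n+1}(x)$, and for $x$ on the boundary of $Q_n(x)$ the child $Q_{n+1}(x)$ is automatically in the outermost layer, so the second branch of your dichotomy holds trivially.
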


  \begin{proof}
    Choose $N=N(\varepsilon)\in\N$  such that $3\sqrt{d} M^{-N}<\varepsilon \leq 3\sqrt{d} M^{-N+1}$. 
    For each $Q\in\mathcal{Q}_{n_0N}$, denote 
    \[
      E_{\varepsilon, Q}=\left\{x\in E\cap Q\,:\,\por\left(E,x, \frac{1}{3}M^{-kN}\right)> 3\sqrt{d} M^{-N} \text{ for all }k\ge n_0\right\}.
    \]
    Then 
    $\{x\in E\,:\,\lpor(E,x)>\varepsilon\}
    \subset
    \bigcup_{n_0\ge 0}\bigcup_{Q\in\mathcal{Q}_{n_0 N}}E_{\varepsilon,Q} $
    and it suffices to estimate the dimension of each $E_{\varepsilon,Q}$.
    Denote $E_\varepsilon=E_{\varepsilon, [0,1]^d}$. Again, without loss of generality, it suffices to  
    show that a.s.
    $\dim_H (E_\varepsilon)\le s-\delta(\varepsilon)$. For that purpose, we construct $F_n$, $n\ge 0$ inductively as follows: Let $F_0=[0,1]^d$. If $Q\subset\mathcal {Q}_{Nn}$, $Q\subset F_n$, then to construct $F_{n+1}$,  we select
    \begin{itemize}
      \item all the surviving sub-cubes $Q'\prec_N Q$ if there are strictly less than $M^{Nd}$ of these (i.e. if not all of the sub-cubes are surviving)
      \item and all but the center cube otherwise. 
    \end{itemize}
    If $M$ is odd, it should be clear what we mean by a center cube and if $M$ is even, we just pick any of the cubes in $\mathcal{Q}_{(n+1)N}$ which touches the center of $Q$. Let $F_{n+1}$ be the union of all the selected cubes $Q\in\mathcal{Q}_{(n+1)N}$ and define $F=\cap_{n \in \N} F_n$.  

    We observe that $E_\varepsilon \subset F$. Indeed, if
    $x\in E\setminus F$, then there exists $n\in\N$, $x\in Q'\prec_NQ\in\mathcal{Q}_{nN}$ where $Q'$ is a center cube of $Q$ and all the subcubes of $Q$ in $\mathcal{Q}_{(n+1)N}$ are surviving. This implies the lower bound $\por(E,x,\tfrac13 M^{-nN})\le 3\sqrt{d}M^{-N}$ and whence $x\notin E_\varepsilon$.

    Conditional on $E\neq\varnothing$,  the number of sub-cubes in $\mathcal{Q}_{nN}$ forming $F_n$ is a $GW$-process with offspring distribution $L$ where 
    \[
      L=\begin{cases} \widetilde{L}, &\textrm{ when } \widetilde{L} \neq M^{dN}, \\
        M^{dN}-1, &\textrm{ when } \widetilde{L} = M^{dN}.
      \end{cases}
    \]
    This implies that 
    \[\EE(L)= \EE(\widetilde{L}) - \PP(\widetilde{L}=M^{dN})=s\log(M^N)-\left( \left(1-q \right)p^{\frac{M^d}{M^d-1}}\right)^{M^{dN}-1}\,\] 
    and using \eqref{eq:dim_formula} we obtain 
    $\dimh(F)\le s-\delta$ a.s. for some $\delta= \delta(d,M,p,N)>0$. Recall that we may assume that $\EE(L)>1$ as otherwise $F=\varnothing$ almost surely. Since $E_\varepsilon \subset F$, we arrive at the required upper bound $\dim_H (E_\varepsilon) \leq s-\delta$. 
  \end{proof}

  \begin{Retheorem}
    \label{thm:Re_upor_lowerbound}
    For any $p > M^{-d}$ there exists $\varepsilon_0>0$ such that for all  $\varepsilon\in (0, \varepsilon_0)$ there is $\delta =\delta(d,M,p,\varepsilon)>0$, such that a.s on non-extinction,
    \[\dim_H\{x \in E: \upor(E,x) \leq 1/2-\varepsilon\} > \delta. \]
  \end{Retheorem}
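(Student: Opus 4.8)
The plan is to construct a random subset $F=\bigcap_nF_n$ of $E$, contained in $\{x\in E:\upor(E,x)\le 1/2-\varepsilon\}$ and built from a branching rule for which $\#F_n$ is a Galton--Watson process with mean offspring strictly larger than $1$. Then $\dim_HF>0$ almost surely on $\{F\ne\varnothing\}$ by \eqref{eq:dim_formula}, the event $\{F\ne\varnothing\}$ has positive probability, and the zero-one law (Lemma~\ref{lemma:zero-one}) upgrades ``$\dim_H\{x\in E:\upor(E,x)\le 1/2-\varepsilon\}\ge\delta_0$ with positive probability'' to the corresponding almost sure statement on non-extinction, yielding the theorem with any $\delta<\delta_0$.

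The first ingredient is the elementary observation that, for $x\in A$, $\por(A,x,r)\le\alpha$ iff $A$ is $\alpha r$-dense in $B(x,(1-\alpha)r)$. Since $x\in E$ already ``covers'' $B(x,\alpha r)$, one gets a sufficient condition for $\upor(E,x)\le 1/2-\varepsilon$ of the following shape: fix a finite net $u_1,\dots,u_K$ ($K=K(d)$) of the unit sphere; then there are absolute constants $c_0,c_1$ so that if, for all large $n$ and all $i\le K$, the set $E$ meets $B\bigl(x+c_0\varepsilon M^{-n}u_i,\tfrac12\varepsilon M^{-n}\bigr)$, then $\upor(E,x)\le c_1\varepsilon M$, which is $<1/2-\varepsilon$ as soon as $\varepsilon<\varepsilon_0:=(2c_1M)^{-1}$. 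Proving this geometric lemma --- that a bounded family of ``$\varepsilon$-surrounding'' $E$-points destroys every hole of relative radius $>1/2-\varepsilon$ at all scales $r\in[M^{-n},M^{-n+1}]$, and that the scale-ranges handled by consecutive $n$ overlap (which needs $x$ to lie somewhat interior to its $M$-adic cube, and is exactly where the bound $\varepsilon\lesssim 1/M$ is used) --- is the technical heart of the argument.

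For the construction, fix a large super-step length $N$ and, along the tree of $M^N$-adic ancestors, keep a cube $Q'\prec_NQ$ (with $Q\subset F_n$, $Q'\subset F_{n+1}$) precisely when: $Q'$ is surviving; $\dist(Q',\partial Q)\ge 3\varepsilon\ell(Q)$ (which forces every $x\in F$ to sit well inside all its ancestor cubes, so that the ``$\varepsilon$-surrounding'' points lie inside those cubes); and, for each $i\le K$, a suitable ball of radius $\approx\varepsilon\ell(Q')$ near $Q'$ contains a surviving cube of some fixed number $K''=K''(\varepsilon,M)<N$ of generations lower. These conditions involve only the percolation inside $Q$ (to the bounded extra depth $K''$), so after recasting them as an inherited property so that $F$ continues as a nested random set in the sense of Section~\ref{sub:rf} --- a routine but slightly delicate point --- $\#F_n$ is a Galton--Watson process with offspring variable $\hat L$, and by the choice of the balls every $x\in F$ is $\varepsilon$-surrounded at all scales, so $F\subset\{x\in E:\upor(E,x)\le1/2-\varepsilon\}$ whenever $\varepsilon<\varepsilon_0$.

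It remains to see that $\EE(\hat L)>1$. A cube $Q\in\mathcal Q_{nN}$ has at least $(1-6\varepsilon)^dM^{dN}$ interior sub-cubes $Q'\prec_NQ$; each is surviving with probability $p^N(1-q)$ ($q$ the extinction probability); and, conditionally on $Q'$ surviving, all $K$ balls contain a surviving cube $K''$ levels below with probability $\ge1-K\eta_0$, where $\eta_0=\eta_0(\varepsilon,M,p,K'')$ is the probability that a region of diameter comparable to its ambient cube carries no surviving descendant $K''$ generations below --- and a standard second-moment (or FKG) estimate for fractal percolation gives $\eta_0\to0$ as $K''\to\infty$. Hence $\EE(\hat L)\ge(1-q)(1-6\varepsilon)^d(1-K\eta_0)\,(pM^d)^N$, and since $pM^d>1$ this is $>1$ once $N$, and then $K''$, are chosen large enough. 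Then \eqref{eq:dim_formula} gives $\dim_HF=\log\EE(\hat L)/(N\log M)=:\delta_0>0$ a.s.\ on the positive-probability event $\{F\ne\varnothing\}$, so $\PP\bigl(\dim_H\{x\in E:\upor(E,x)\le1/2-\varepsilon\}\ge\delta_0\bigr)>0$; since this is a lower bound for an admissible property (phrased, as in the other proofs, via the porosity of $E\cap Q$ at points interior to $Q$), Lemma~\ref{lemma:zero-one} gives the conclusion a.s.\ on non-extinction. The main obstacle throughout is the geometric lemma together with the matching of scales between successive levels of the construction, which is precisely what forces the restriction $\varepsilon<\varepsilon_0$.
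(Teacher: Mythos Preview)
Your overall architecture matches the paper's exactly: build $F_n\subset E_{nN}$ so that $\#F_n$ is a Galton--Watson process with mean $>1$, check $F\subset\{\upor(E,\cdot)\le 1/2-\varepsilon\}$, and upgrade via Lemma~\ref{lemma:zero-one}. The difference is in the branching rule. The paper does not use a finite directional net; instead it fixes $a\ge\sqrt{d}+2$ and, for each $Q'\in\mathcal Q_{(n+1)N-a}$ contained in $E_{(n+1)N-a}$, keeps the interior cubes $Q''\prec_a Q'$ \emph{only if every $Q''\prec_a Q'$ survives}. This ``full barrier'' of surviving cubes at level $(n+1)N$ around $x$ is what carries the porosity bound across the whole scale window $r\in[\sqrt d M^{-(n+1)N},\sqrt d M^{-nN})$: any ball $B(y,\rho r)\subset B(x,r)$ with $\rho=\tfrac12-\tfrac{M^{-2N}}{2\sqrt d}$ has $\dist(x,B(y,\rho r))\le M^{-(n+2)N}$, and the segment $L_{x,y}$ must meet one of the barrier cubes at distance $\ge (a-1)M^{-(n+2)N}$ from $x$, forcing $E\cap B(y,\rho r)\ne\varnothing$. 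In particular the paper's $\varepsilon_0$ is $M^{-2N}/(2\sqrt d)$, depending on the super-step length $N$.

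Your directional-net construction has a scale-matching gap. Your geometric lemma is stated for \emph{every} level $M^{-n}$, with the overlap between consecutive $n$ costing a factor $M$ (whence your $\varepsilon_0=(2c_1M)^{-1}$, independent of $N$). But the branching rule you actually implement imposes only $K$ conditions per super-step of length $N$, namely ``a ball of radius $\approx\varepsilon\ell(Q')$ near $Q'$ contains a surviving cube'' --- this controls one scale, roughly $\ell(Q')=M^{-(n+1)N}$. Nothing in your construction guarantees surviving points at the intermediate distances $c_0\varepsilon M^{-m}$ for $nN<m<(n+1)N$ that your lemma requires. If instead you try to run the lemma only along the super-step scales $M^{-nN}$, the overlap constant becomes $M^N$ rather than $M$, and the conclusion degrades to $\upor(E,x)\le c_1\varepsilon M^N$, which is useless once $N$ is large. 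The paper's barrier avoids this precisely because it is \emph{dense}: every direction from $x$ hits a surviving cube within the barrier, so a single level of the construction certifies the porosity bound across the full window $[\ell(Q'),\ell(Q)]$. Your sparse net of $K=K(d)$ surviving points does not, as written, have this property. (A version of your idea does work --- it is essentially what the paper does in the more general Theorem~\ref{thm:gen_upor_lowerbound} via the conical-density Lemma~\ref{lemma:conical} --- but there too the resulting $\varepsilon_0=\varepsilon_0(d,M,N)$ depends on $N$, and the argument is set up so that one super-step handles the entire scale window.)
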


  \begin{proof}
    It is clearly enough to find $\rho<\tfrac12$ and $\delta>0$ such that $\dim_H\{x \in E: \upor(E,x) \leq \rho\} \geq \delta$ a.s on non-extinction.

    Fix $a\in\N$ with $a\ge\sqrt{d}+2$. 
    Given $N\ge a$, we construct $F_n$, $n\ge 0$ inductively as follows: Let $F_0=[0,1]^d$. For $n\ge0$, suppose that $Q\subset\mathcal {Q}_{Nn}$, $Q\subset F_n$ and $Q'\prec_{N-a}Q$ with  $Q'\subset E_{(n+1)N-a}$, here we need $N >a$. To form $F_{n+1}$, we select cubes $Q''\prec_aQ'$ for each such $Q'$ as follows: 
    \begin{itemize} 
      \item  If every cube $Q''\prec_aQ'$ is surviving, then we erase cubes $Q''\prec_a Q'$ with $\dist (Q'', \partial Q')\leq (a-1)M^{-(n+1)N}$ and choose the remaining sub-cubes $Q''\prec_a Q'$. 
      \item Otherwise, we don't  choose sub-cubes of $Q'$.
    \end{itemize}
    Let $F_{n+1}$ be the union of all the selected sub-cubes in $\mathcal{Q}_{(n+1)N}$. 
    Conditional on $E\neq\varnothing$,  the number of sub-cubes in $\mathcal{Q}_{nN}$ forming $F_n$ is a $GW$-process with offspring distribution $L$ that has the same distribution as $\# F_1|\text{non-extinction}$. In particular,
    \[\EE(L)=(1-q)^{-1}\EE(\#F_1)\,.\]  
    Note that the only possible values for $L$ are
    $k(M^a-2a)^d$ for $k = 0, 1, \cdots ,M^{d (N -a)}$,  and that always $M^a-2a >0$. Further,
    \begin{align*}
      &\EE(\#F_1)
      =\EE\left( \sum_{Q\in\QQ_{N-a}} (M^a-2a)^d\textbf{1}[Q\subset E_{N-a}\text{ and all $Q'\prec_a Q$ survive}] \right) \\
      &=(M^a-2a)^d\sum_{Q\in\QQ_{N-a}} \PP\left( Q\subset E_{N-a}\text{ and all $Q'\prec_a Q$ survive}\right) \\
      &=(M^a-2a)^d\sum_{Q\in\QQ_{N-a}} \PP\left( Q\subset E_{N-a}\right) \PP\left(\text{all $Q'\prec_aQ$ survive}\,\mid\,  Q\subset E_{N-a} \right) \\
      &=(M^a-2a)^d \PP\left( \# E_a=M^{ad} \right)(1-q)^{M^{ad}} \sum_{Q\in\QQ_{N-a}} \PP\left( Q\subset E_{N-a}\right) \\
      &=C\EE(\# E_{N-a})=C(d,M,p) (pM^d)^N\,,
    \end{align*}
    where $C$ and $C(d,M,p)$ are positive constant and independent of $N$. For the reason $pM^d > 1$, we may choose large $N$ such that $\EE(L)>1$. Let $F=\cap_{n\in \N} F_n$. Then, by \eqref{eq:dim_formula}, there is a positive probability that
    \[\dim_H(F)=\frac{\log\EE(L)}{\log M^N}=:\delta>0\,.\]

    Now, let $x \in F, 0<r< \sqrt{d}$ and $\rho =\frac{1}{2}
    -\frac{M^{-2N}}{2\sqrt{d}}$. We will show that $\por(E,x,r) \leq \rho$.  Let $n\in\{0,1,\ldots\}$ such that $\sqrt{d}M^{-(n+1)N}\leq r <\sqrt{d}M^{-nN}$ and denote by $Q$ the cube in $\QQ_{(n+1)N}$ that contains $x$ so that $B(x,r)\subset Q$. For any ball $B(y,\rho r) \subset B(x,r)$, we have 
    \begin{equation}\label{hole}
      \dist(x, B(y,\rho r)) \leq r-2\rho r\leq M^{-(n+2)N},
    \end{equation}
    where the last inequality holds since $r <\sqrt{d}M^{-nN}$. 

    Let $L_{x,y}$ be the line segment joining $x$ to the boundary of the ball $B(x,r)$ through $y$. By the construction of $F$, there is a surviving cube $Q'\prec_{N}Q$, with $Q' \cap L_{x,y}\neq \emptyset$ and 
    $\dist(x, Q')\geq (a-1)M^{-(n+2)N}$.
    \begin{figure}
      \centering
      \resizebox{0.6\textwidth}{!}{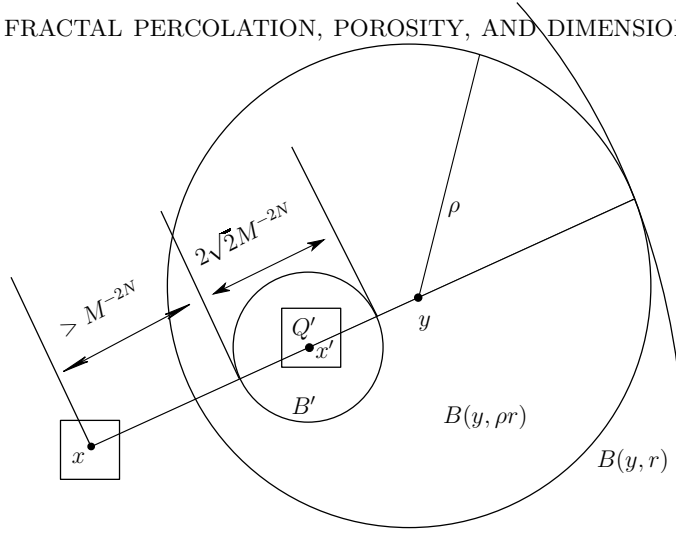}
      \caption{Illustration for the proof of Theorem \ref{thm:upor_lowerbound} in the case $n=0$.
    } 
    \label{thm4} 
  \end{figure}
  Let $x' \in L_{x,y} \cap Q'$ and 
  $B':=B(x', \sqrt{d}M^{-(n+2)N})$. Then (see Figure \ref{thm4})
  \[
  \dist(x, B') \geq \dist(x, Q')-\sqrt{d}M^{-(n+2)N} > M^{-(n+2)N}\,,\]
  by recalling the choice of $a$.
  Together with the estimate \eqref{hole}, 
  we obtain $Q'\subset B' \subset B(x,\rho r)$. Since $Q$ is surviving, we conclude that  
  $B(x,\rho r) \cap E \neq \emptyset$ and whence $\por(E,x,r) \leq \rho$. 
  But this holds for all $x\in F$, $0<r<1$, so we obtain  
  $F \subset \{x \in E\cap\interior[0,1]^d\,:\,\upor(E,x) \leq \rho\}$. Thus there is positive probability $c>0$ that
  \[
    \dim_H\{x \in E\cap\interior[0,1]^d\,:\, \upor(E,x) \leq \rho\} \geq \delta\,.
  \]
  If, instead of $[0,1]^d$, we start with $F_0=Q$ for $Q\in\QQ_n$, and denoting the limit set by $F_Q$, the same proof implies
  \[\PP\left(\dim_H(F_Q)=\delta\,:\,Q\subset E_n\right)\ge c\,.\]
  for all $Q\in\mathcal{Q}_n$ and also that $\upor(E,x)\leq\rho$ for all $x\in F_Q$. Since the event \[\dim_H(F_{Q'})=\delta\text{ for some sub-cube } Q'\subset Q\] 
  is clearly admissible, the proof is finished by Lemma \ref{lemma:zero-one}.
\end{proof}

\begin{Retheorem}

  \label{thm:Re_lpor_lowerbound}
  For any $p > M^{-d}$, there exists $\varepsilon_0>0$ such that for $ \varepsilon\in (0, \varepsilon_0)$ there exist $\delta =\delta(d,M,p,\varepsilon)>0$, such that a.s. on non-extinction,
  \[
    \dim_H\{ x\in E: \lpor(E, x)> \varepsilon\} > \delta\,.
  \]
\end{Retheorem}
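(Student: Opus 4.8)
The plan is to mimic the proof of Theorem~\ref{thm:Re_upor_lowerbound}, constructing a random subset $F=\cap_n F_n$ with $F\subset E$ and $\dim_H(F)=\delta>0$ with positive probability, where every $x\in F$ has $\lpor(E,x)\ge\varepsilon$ for a suitable $\varepsilon>0$; then a zero-one argument via Lemma~\ref{lemma:zero-one} upgrades ``positive probability'' to ``a.s.\ on non-extinction''. The key point is that to force $\lpor(E,x)>\varepsilon$ we need a porosity hole \emph{at every small scale} (not just infinitely many scales), so the selection rule for $F_{n+1}$ must guarantee that at \emph{each} level $nN$ the cube around $x$ contains a definite gap. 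Fix $N$ large and, as in Theorem~\ref{thm:Re_upor_lowerbound}, work with $M^N$-adic cubes; when refining $Q\in\QQ_{nN}$ with $Q\subset F_n$, look at the surviving sub-cubes $Q'\prec_N Q$ and \emph{only} proceed (selecting the surviving $Q''\prec_{?}Q'$ that avoid a fixed-width annulus near $\partial Q'$, exactly as in Theorem~\ref{thm:Re_upor_lowerbound}) if the configuration of surviving cubes forces a hole of relative size $\gtrsim M^{-2N}$ at scale $\sim M^{-nN}$ around every point of $Q$; otherwise select nothing. Since we already know (proof of Theorem~\ref{thm:Re_upor_lowerbound}) that the subdivision rule ``erase the cubes within distance $(a-1)M^{-(n+1)N}$ of $\partial Q'$'' produces a genuine hole seen from every point of the selected region at that scale, the same geometric lemma applies here and in fact the \emph{same} set $F$ works: what was ``$\upor\le\rho<\tfrac12$'' for $F$ in Theorem~\ref{thm:Re_upor_lowerbound} was proved by producing a hole at \emph{every} scale $r$, hence it is simultaneously a lower bound ``$\lpor\ge\varepsilon$'' for an appropriate $\varepsilon=\varepsilon(N,M,d)>0$.

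Concretely, I would re-examine the estimate in the proof of Theorem~\ref{thm:Re_upor_lowerbound}: there it is shown that for $x\in F$ and any $0<r<\sqrt d$, one has $\por(E,x,r)\le\rho=\tfrac12-\tfrac{M^{-2N}}{2\sqrt d}$, by exhibiting a surviving cube $Q'$ (hence a point of $E$) inside $B(x,\rho r)$ that is at distance $>M^{-(n+2)N}$ from $x$. That distance bound is precisely a \emph{lower} bound on the porosity hole: it shows $B(x, c r)\cap E=\varnothing$ is \emph{false}, but more usefully, turning the argument around, the erased annulus of width $(a-1)M^{-(n+1)N}$ around $\partial Q'$ is a region in $B(x,r)\setminus E$ containing a ball of radius comparable to $M^{-(n+1)N}\asymp M^{-N}r$, so $\por(E,x,r)\gtrsim M^{-N}$. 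Hence setting $\varepsilon_0=\varepsilon_0(d,M,p)$ to be this implicit constant and $\varepsilon<\varepsilon_0$, every $x\in F$ satisfies $\por(E,x,r)\ge\varepsilon$ for all small $r$, i.e.\ $\lpor(E,x)\ge\varepsilon>0$.

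It then remains to check that $\#F_n$ is again a Galton-Watson process with mean offspring $>1$ for $N$ large. This is exactly the computation carried out in Theorem~\ref{thm:Re_upor_lowerbound}: the offspring variable is (a fixed multiple of) the number of sub-cubes $Q\prec_{N-a}[0,1]^d$ with $Q\subset E_{N-a}$ and all $Q'\prec_a Q$ surviving, whose expectation is $C(d,M,p)(pM^d)^N$ with $C>0$ independent of $N$; since $pM^d>1$ we can pick $N$ with $\EE(L)>1$, and \eqref{eq:dim_formula} gives $\dim_H(F)=\log\EE(L)/\log M^N=:\delta>0$ with positive probability. Finally, repeating the construction with $F_0=Q$ for an arbitrary $Q\in\QQ_n$ and noting that ``$\dim_H(F_{Q'})=\delta$ for some sub-cube $Q'\subset Q$, and $\lpor(E,x)\ge\varepsilon$ for all $x\in F_{Q'}$'' is an admissible property (in the sense preceding Lemma~\ref{lemma:zero-one}), Lemma~\ref{lemma:zero-one} yields the claim a.s.\ on non-extinction.

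The main obstacle I anticipate is the geometric bookkeeping: one must verify that the erased annulus around $\partial Q'$ genuinely produces a ball of the advertised radius inside $B(x,r)\setminus E$ \emph{uniformly over all base points $x$ in the selected region and all scales $r\in[\sqrt d M^{-(n+1)N},\sqrt d M^{-nN})$}, in particular handling the boundary cases where $x$ sits near $\partial Q$ and the nested-cube containing $x$ is not quite centered in $B(x,r)$. This is the same subtlety that Figure~\ref{thm4} addresses in the proof of Theorem~\ref{thm:Re_upor_lowerbound}, and I expect the constant $a\ge\sqrt d+2$ chosen there to be exactly what makes it work here as well; the remaining steps (the GW computation, the dimension formula, and the zero-one upgrade) are then routine.
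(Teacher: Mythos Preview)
Your proposal contains a genuine conceptual error: the set $F$ constructed in the proof of Theorem~\ref{thm:Re_upor_lowerbound} does \emph{not} have $\lpor(E,x)\ge\varepsilon$ for its points, and the ``erased annulus'' is not a hole in $E$. Recall that in that construction one selects sub-cubes of $Q'\prec_{N-a}Q$ \emph{only in the event that every} $Q''\prec_a Q'$ \emph{is surviving}; the cubes $Q''$ with $\dist(Q'',\partial Q')\le(a-1)M^{-(n+1)N}$ are then erased from $F$, but by hypothesis they survive and therefore contain points of $E$. The annulus is a \emph{barrier} of $E$-points surrounding the selected region, which is precisely what blocks large porosity holes and yields $\upor(E,x)\le\rho<\tfrac12$. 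It says nothing about the existence of a hole; indeed, a point with $\upor(E,x)<\tfrac12$ can perfectly well have $\lpor(E,x)=0$. Your sentence ``the erased annulus \ldots\ is a region in $B(x,r)\setminus E$'' is simply false.

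The paper's actual construction is in a sense dual to the one you tried to reuse, and much simpler. One sets $F_0=[0,1]^d$ and, for $Q\in\mathcal{Q}_{nN}$ with $Q\subset F_n$, selects all surviving $Q'\prec_N Q$ \emph{provided not all of them survive}, and selects none otherwise. Then for every $x\in F$ and every $n$, the cube $Q\in\mathcal{Q}_{nN}$ containing $x$ has some $Q'\prec_N Q$ with $Q'\cap E=\varnothing$, which is a genuine hole of side $M^{-(n+1)N}$ inside $B(x,\sqrt{d}M^{-nN})$; this gives $\por(E,x,r)\ge\tfrac12 d^{-1/2}M^{-2N}$ for all small $r$. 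The offspring distribution is $L=\widetilde{L}\,\textbf{1}[\widetilde{L}<M^{dN}]$, and since $M^{dN}\PP(\widetilde{L}=M^{dN})\to 0$ while $\EE(\widetilde{L})=(pM^d)^N\to\infty$, one gets $\EE(L)>1$ for large $N$. The zero-one upgrade via Lemma~\ref{lemma:zero-one} is then exactly as you describe.
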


\begin{proof}
  Fix $N\in\N$. We construct $F_n$, $n\ge 0$ inductively as follows: Let $F_0=[0,1]^d$. If $Q\subset\mathcal {Q}_{Nn}$, $Q\subset F_n$, then to construct $F_{n+1}$ we select
  \begin{itemize}
    \item all the surviving cubes of $Q'\prec_N Q$ if there are strictly less than $M^{Nd}$ of these (i.e. if not all of the sub-cubes are surviving).
    \item and none of them otherwise.
  \end{itemize}
  and let $F_{n+1}$ be the union of all the selected cubes in $\QQ_{(n+1)N}$.
  Conditional on $E\neq\varnothing$,  the number of sub-cubes in $\mathcal{Q}_{nN}$ forming $F_n$ is a $GW$-process with offspring distribution
  \[
    L=\widetilde{L}\textbf{1}\left[\widetilde{L}<M^{dN}\right]\,.
  \]
  This implies that $\EE(L)=\EE(\widetilde{L})-M^{dN}\PP(\widetilde{L}=M^{dN})$
  Since 
  \[\PP(\widetilde{L}=M^{dN}) =   ((1-q)p^{\frac{M^d}{M^d-1}})^{M^{dN}-1}\,,\] the second term 
  goes to zero as $N$ increases. On the other hand, $\EE(\widetilde{L})=\EE(L_o)^N\rightarrow\infty$ as $N\to\infty$ (recall \eqref{eq:EL}), so we conclude that $\EE(L)>1$ when $N$ is large enough, say $N\ge N_0$. Whence, by \eqref{eq:dim_formula}, there is a positive probability $c>0$ that
  \[
    \dim_H(F)=\frac{\log\EE(L)}{\log M^N}=:\delta>0\,,
  \]
  where $F=\cap_{n\in \N} F_n$. 
  It follows from the construction of $F$, that for each $x\in F$, and $Q\in\mathcal{Q}_{nN}$ with $x\in Q$, there is $Q'\prec_N Q$ with $Q\cap E=\varnothing$. 
  Whence, for all $\sqrt{d}M^{-nN}\le r<\sqrt{d}M^{(1-n)N}$ we have (see Figure \ref{fig:theorem5})  \begin{equation}\label{2Nporo}
    \por(E,x,r)\ge\varepsilon:=\frac12 d^{-1/2} M^{-2N}\,.
  \end{equation}

  \begin{figure}[h]
    \begin{centering}
      \begin{tikzpicture}[scale=.7]
        \filldraw (.4,.4) circle (1pt) node[above] {$x$};
        \draw (.4,.4) -- ++(70:7) node[near start,right,draw=none] {$<\sqrt{d} M^{(1-n) N} $} node (endofrad) {};
        \draw (0,0) rectangle (5,5);
        \node[below] (Q) at (2.5,0)  {$Q$};
        \draw (4,4) node[left] {$Q'$} rectangle (5,5);
        \draw (4.5,4.5) circle (.5);
        \draw (4.5,4.5)  -- +(30:.5) node[near start] (smallradius) {};
        \node (teksti) at (7,6) {$\frac{1}{2} M^{-(n+1) N}$};
        \draw[->] (teksti) to[bend left] (smallradius);
        \draw (endofrad) arc (70:90:7);
        \draw (endofrad) arc (70:30:7);
      \end{tikzpicture}
    \end{centering}
    \caption{If not all of the cubes survive, there will be at least one missing and that creates a porosity hole.}
    \label{fig:theorem5}
  \end{figure}
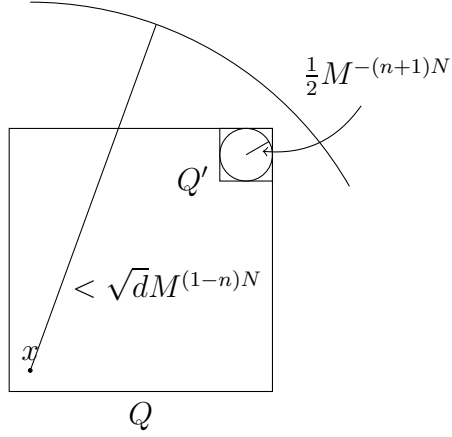

  Since this holds for all $n\ge 0$, we conclude that $\lpor(E,x)\ge\varepsilon$
  for all $x\in F$ and all $0<r<1$. Further, if instead of $[0,1]^d$, we start the construction of $F$ from $F_0=Q$ for $Q\in\QQ_n$, denoting the limit set by $F_Q$, we have $\PP(\dim_H(F_Q)=\delta\,|\,Q\subset E_n)\ge c$ and likewise $\lpor(E,x)\ge\varepsilon$ for all $x\in F_Q$. 

  The proof is completed by Lemma \ref{lemma:zero-one} by noting that the event \[\dim_H(F_{Q'})\ge\delta \text{ for some sub-cube } Q'\subset Q \] is admissible.
\end{proof}

\begin{remarks}\label{rem:boundaries}
  \emph{a)} If the a.s. dimension is $s>1$, it is actually true that a.s. $\dim_H\{x\in E\,:\,\lpor(E,x)=\tfrac12\}\ge s-1$. This is due to the following fact: Denote by $S$ one of the $M$-adic hyperplanes cutting $[0,1]^d$, say $S=\{x\in\R^d\,:\,x_1=M^{-1}\}\cap\interior[0,1]^d$ and let $U_1,U_2$ be the components of $[0,1]^d\setminus S$. Then there is a positive probability that $\dim_H (E\cap S)=s-1$ and that $E\setminus S$ is completely contained in $U_1$. To see this, we condition on the positive probability event that $E_1\neq\varnothing=E_1\cap U_2$. Then $E\cap S$ is essentially a fractal percolation on $\R^{d-1}$ with the parameters $p$, $M$ and whence $\dim_H(E\cap S)=s-1$ with positive probability. 

  Obviously, $\lpor(E,x)=\tfrac12$ for all $x\in E\cap S$ if $E\cap U_1=\varnothing$ and whence there is a positive probability for $\dim_H\{x\in E\,:\,\lpor(E,x)=\tfrac12\}\ge s-1$. To show that this holds with full probability almost surely on non-extinction, one applies Lemma \ref{lemma:zero-one} as in the proofs of Theorems \ref{thm:upor_lowerbound} and \ref{thm:lpor_lowerbound} above.\\

  \emph{b)} Even for $s<1$, it seems reasonable to conjecture that one could choose $\varepsilon_0=\tfrac12$ in Theorem \ref{thm:Re_lpor_lowerbound}. That is, $\dim_H\{x\in E\,:\,\lpor(E,x)>\tfrac12-\varepsilon\}>0$ for all $\varepsilon>0$ a.s. on $E\neq\varnothing$. We note that any set $A\subset\R^d$ with $\lpor(A,x)=\tfrac12$ for all $x\in A$ satisfies $\dimh A\le d-1$ (see \cite{Mattila1988}) and in particular for $d=1$ we have the (deterministic bound) $\dim_H\{x\in E\,:\,\lpor(E,x)=\tfrac12\}=0$.

\end{remarks}

We finish this section by providing the proof of Theorem \ref{thm:Re_asDim_upor_ubound}, which shows that a.s. on non-extinction, the sub- and superlevelsets of upper and lower porosities have the same multifractal structure.

\begin{Retheorem}
  For any $p > M^{-d}, 0\leq \alpha \leq \frac{1}{2}$, there exists $\beta_u=\beta_u(\alpha)$ and $\beta_l=\beta_l(\alpha)$, such that a.s. on non-extinction,
  \[
    \dim_H\{x\in E: \upor(E,x) \leq \alpha\} = \beta_u
    \,\,\text{ and }\,\, 
    \dim_H\{x\in E: \lpor(E,x) \geq \alpha\} = \beta_l.
  \]
  \label{thm:Re_asDim_upor_ubound}
\end{Retheorem}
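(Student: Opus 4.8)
The plan is to deduce the statement from a zero-one law, much as in the proofs of Theorems~\ref{thm:Re_upor_lowerbound} and \ref{thm:Re_lpor_lowerbound}, but applied to the quantity ``dimension of the exceptional set'' rather than to mere non-emptiness. First I would fix $\alpha\in[0,\tfrac12]$ and, for each dyadic rational $t\in[0,d]$ (or each $t$ in a fixed countable dense subset of $[0,d]$), consider the event
\[
  \mathcal{A}_{t}(Q,E):\qquad \dim_H\{x\in E\cap Q\,:\,\upor(E,x)\le\alpha\}\ge t\,.
\]
The point is that $\mathcal{A}_{t}$ is \emph{admissible} in the sense defined before Lemma~\ref{lemma:zero-one}: whether it holds for $(Q,E)$ depends only on the random variables $X_{Q'}$ for sub-cubes $Q'\subset Q$ of all generations (the porosity of $E$ at a point $x\in\interior Q$ is determined by arbitrarily small scales, hence by sub-cubes of $Q$; minor care is needed at $\partial Q$, but as in the earlier proofs one may intersect with $\interior[0,1]^d$, which changes no dimension), and monotonicity is immediate since if $Q\prec_N Q'$ then $E\cap Q\subset E\cap Q'$ and so a subset of dimension $\ge t$ inside $Q$ witnesses the same for $Q'$.

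Next I would invoke Lemma~\ref{lemma:zero-one}. We already know $\EE(L_o)=pM^d>1$. For the uniform positive lower bound on conditional probabilities, note that by stochastic self-similarity of the fractal percolation process, for every $Q\in\QQ_n$ the conditional law of $(E\cap Q$, rescaled$)$ given $Q\subset E_n$ is exactly the law of $E$; hence
\[
  \PP\bigl(\mathcal{A}_t(Q,E)\,\bigm|\,Q\subset E_n\bigr)=\PP\bigl(\mathcal{A}_t([0,1]^d,E)\bigr)=:c_t\,,
\]
independent of $Q$ and $n$. If $c_t>0$, Lemma~\ref{lemma:zero-one} gives $\PP(\mathcal{A}_t([0,1]^d,E)\mid\text{non-extinction})=1$; if $c_t=0$, then trivially $\PP(\mathcal{A}_t([0,1]^d,E))=0$. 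Either way, $\mathbf{1}[\mathcal{A}_t([0,1]^d,E)]$ is a.s.\ constant on non-extinction. Taking the supremum over the countably many dyadic $t$ in the ``$=1$'' case, we find that
\[
  \dim_H\{x\in E\,:\,\upor(E,x)\le\alpha\}=\sup\{t\ \text{dyadic}:c_t>0\}=:\beta_u(\alpha)
\]
a.s.\ on non-extinction (using that a countable supremum of a.s.\ events is a.s.; the value $\beta_u(\alpha)$ is realized as a genuine dimension since dimension is determined by the values $\dim_H\ge t$ for all $t$ below it). The identical argument with $\upor(E,x)\le\alpha$ replaced by $\lpor(E,x)\ge\alpha$ produces $\beta_l(\alpha)$; the admissibility check is the same, using $E\cap Q\subset E\cap Q'$ once more.

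The one genuinely delicate point is the admissibility (self-similarity) check for $\mathcal{A}_t$: I must make sure that restricting attention to sub-cubes of $Q$ really captures the event $\{\dim_H\{x\in E\cap Q:\upor(E,x)\le\alpha\}\ge t\}$ without interference from the ambient set $E\setminus Q$. The subtlety is exactly the one flagged in the paper when defining ``surviving'': points of $E$ on $\partial Q$ may owe their existence, or their porosity, to neighbouring cubes. This is handled by working inside $\interior[0,1]^d$ (and $\interior Q$) throughout: since $E\cap\partial Q$ lies in a finite union of affine hyperplanes it has dimension $\le d-1<s$ and, more importantly, $\{x\in E\cap Q:\upor(E,x)\le\alpha\}$ and its intersection with $\interior Q$ differ by a set contained in countably many lower-dimensional percolation limit sets, which a.s.\ does not affect the Hausdorff dimension of the level set when $t>d-1$; for $t\le d-1$ one argues directly that the interior version already achieves the relevant dimension, or simply restricts the whole discussion to $E\cap\interior[0,1]^d$ as in the proofs of Theorems~\ref{thm:Re_upor_lowerbound}--\ref{thm:Re_lpor_lowerbound}, which suffices since $\dim_H(E\cap\partial[0,1]^d)\le s$ with the level-set dimension unchanged. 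Once this bookkeeping is in place, everything else is a formal consequence of Lemma~\ref{lemma:zero-one}.
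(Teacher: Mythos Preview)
Your proposal is correct and follows essentially the same route as the paper: both arguments fix $\alpha$, consider the events $\dim_H\{x\in E\cap\interior Q:\upor(E,x)\le\alpha\}$ exceeds a threshold, check admissibility via stochastic self-similarity, apply Lemma~\ref{lemma:zero-one} to obtain a $0$--$1$ law for each threshold, and then identify $\beta_u$ (resp.\ $\beta_l$) as the supremum over thresholds in the ``probability one'' regime. The paper uses strict inequalities $>\beta$ for all $\beta$ and passes to the infimum, while you use weak inequalities $\ge t$ over a countable dense set and take a supremum; these are equivalent packagings.

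One small point: your boundary discussion is slightly muddled (for instance, $d-1<s$ need not hold). The paper disposes of the boundary more cleanly by observing that $\upor(E,x)=\tfrac12$ for every $x\in\partial[0,1]^d$, so such points never lie in $\{\upor(E,x)\le\alpha\}$ when $\alpha<\tfrac12$, and the case $\alpha=\tfrac12$ is trivial since the set is all of $E$. For the lower-porosity claim the paper appeals to the argument of Remark~\ref{rem:boundaries}(a). You may wish to replace your boundary paragraph with these observations.
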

\begin{proof}
  We start from the first claim.
  For any $0<\alpha<\tfrac12$ and $\beta>0$, the property $\dim_H \left\{ x \in E\cap\interior Q: \upor(E,x) \leq \alpha \right\} > \beta$ is clearly admissible. Furthermore,
  \begin{align*}
    &\PP\left(\dim_H \left\{ x \in E\cap \interior Q: \upor(E,x) \leq \alpha \right\} > \beta\,|\,Q\subset E_n\right)\\
    &=\PP\left(\dim_H \left\{ x \in E\cap \interior [0,1]^d: \upor(E,x) \leq \alpha \right\} >\beta\right)
  \end{align*}
  for all $n\in\N$ and $Q\in\QQ_n$. Thus, Lemma \ref{lemma:zero-one} implies that $\PP(B_{\beta,\alpha}\,|\,\text{non-extinction})\in\{0,1\}$, where $B_{\beta,\alpha}$ is the event $\dim_H \left\{ x \in E: \upor(E,x) \leq \alpha \right\} > \beta$. Note that $\upor(E,x)=\tfrac12$ for all $x\in\partial[0,1]^d$ so that we don't have to worry about the boundary points.
  Trivially, we also have $\PP(A_{\alpha,\beta}\,|\,E\neq\varnothing)=1-\PP(B_{\beta,\alpha}\,|\,E\neq\varnothing)\in \left\{ 0,1 \right\}$, when $A_{\alpha,\beta}$ is the event $\dim_H \left\{ x \in E: \upor(E,x) \leq \alpha \right\} \leq \beta$

  Recall that our goal is to show that for each $\alpha$, there is $\beta_0$ with $\PP\left( C_{\alpha,\beta_0}\,|\,E\neq\varnothing\right) = 1$, where $C_{\alpha,\beta}$ denotes the event $\dim_H \left\{ x \in E: \upor(E,x) \leq \alpha \right\} = \beta$.
  Since $C_{\alpha,\beta} = A_{\alpha,\beta} \cap \bigcap_{n \in \N} B_{\alpha,\beta-\frac{1}{n}}$, we infer that for any $\alpha$ and $\beta$, $\PP(C_{\alpha,\beta}\,|\,E\neq\varnothing) \in \left\{ 0,1 \right\}$. Let $\beta_0 := \inf \left\{ \beta : \PP(A_{\alpha,\beta}\,|\,E\neq\varnothing) =1 \right\}$. Then $\PP(B_{\alpha,\beta_0-\frac{1}{n}}\,|\,E\neq\varnothing)=1$ for every $n \in \N$. Also,  since 
  $A_{\alpha,\beta} = \bigcap_{n\in\N} A_{\alpha,\beta+\frac{1}{n}}$, it follows that $\PP(A_{\alpha,\beta_0}\,|\,E\neq\varnothing) = 1$ and the claim for $\beta_u$ follows.

  The proof for $\beta_l$ is analogous, using that the property 
  \[\dim_H \left\{ x \in E\cap\interior Q: \lpor(E,x) \geq \alpha \right\} > \beta\] is admissible. For points $x\in E\cap\partial[0,1]^d$ one argues as in Remark \ref{rem:boundaries} a) above.
\end{proof}

\begin{remark}\label{rem:porinK} 
  More generally, for any $K\subset[0,\tfrac12]$ the above argument generalizes to show that there is $\gamma_u(K)$, $\gamma_l(K)$ such that a.s. on non-extinction, $\dimh\{x\in E\,:\,\upor(E,x)\in K\}=\gamma_u(K)$, $\dimh\{x\in E\,:\,\lpor(E,x)\in K\}=\gamma_l(K)$. A natural question, which we have not been able to answer is the following: Is it true that $\beta_u(\alpha)=\gamma_u(\{\alpha\})$, $\beta_l(\alpha)=\gamma_l(\{\alpha\})$?
\end{remark}

\section{Generalizations}\label{sec:MoreGeneral}

\subsection{Inhomogeneous fractal percolation}

So far we have considered the porosity properties of (homogeneous) fractal percolation, that is , the model where all the cubes $Q\in\mathcal{Q}_1$ have the same probability $p$ of being retained. In inhomogeneous fractal percolation, we assign a parameter $0\le p_Q\le 1$ to each $Q\in\mathcal{Q}_1$. In the first step of the construction, each $Q\in\mathcal{Q}_1$ is chosen with probability $p_Q$ with all choices independent of each other and the process is repeated independently in each of the retained cubes ad infinitum. It is easy to see that if $p_Q<1$ for at least one $Q\in\mathcal{Q}_1$, the proofs of Theorems \ref{thm:E_upper_porous}, \ref{thm:upor_dimbound}, \ref{thm:upor_lowerbound}, and \ref{thm:lpor_lowerbound} go through in this setting. Furthermore, if $p_Q>0$ for all $Q\in\mathcal{Q}_1$, then also the Theorem \ref{thm:lpor_dimbound} is valid for inhomogeneous fractal percolation.

In the subsection below, we provide extensions of these results and show that the Theorems \ref{thm:E_upper_porous} - \ref{thm:lpor_lowerbound} are actually valid under some rather mild assumptions.

\subsection{Porosity for more general random sets}

In this section we consider the porosity properties of the random sets, as defined in Section \ref{sub:rf}. 
Recall that we have a random sequence of sets $E_n$ formed out of cubes $Q \in \QQ_n$, and the limit set $E=\bigcap_{n \in \N} E_n$ such that the number of cubes forming $E_n$, $\#E_n$, is a $GW$-process. As in the case of fractal percolation, we will denote the generating offspring distribution of this $GW$-process by $L_o$.

Note, that to build the $GW$ processes with offspring distributions $\widetilde{L}$ in the proofs of Theorems \ref{thm:E_upper_porous} - \ref{thm:lpor_lowerbound}, we 
basically only used the information on $L_o$. In the case of homogeneous fractal percolation the distribution of $L_o$ and thus also the distribution of $\widetilde{L}$ (for a fixed $N$) 
are completely determined by the parameter $p$. 
Likewise, in the current situation of more general random sequences $E_n$, we can still form the sequences $F_n$ capturing the porosity properties of $E_n$ such that the number of cubes in $\QQ_{nN}$ forming $F_n$ is again a $GW$ process. 
But instead of being able to explicitly estimate the probability of extinction/non-extinction (and dimension of the limit set) of these processes using the parameter $p$, we must assume something from the initial distribution $L_o$.
Recall that by \eqref{eq:dim_formula}, the set $E$ has constant almost sure dimension, conditioned on non-extinction, 
which we will denote by $s=s(d,M,L_o)$.

We will assume that $\PP(L_o= 1)<1$ and $\PP(L_o= M^d)<1$  
in the  following to exclude trivial cases. 

As before, we denote by $Z_n$ the number of surviving cubes in $\mathcal{Q}_{n}$ and for each $N\in\N$, consider the random variable $\widetilde{L}=\widetilde{L}^{N,M,d,L_o}$ such that
\[\mathbb{P}(\widetilde{L}=k)=\mathbb{P}(Z_N=k\,|\,E\neq\varnothing)\,.\]

For more general random sets, 
we give the following variant of an admissible property by changing 
the random variables $X_{Q'}$ $L_{Q'}$. So, we say that 
a property $\mathcal{A}$ among the pairs $Q\in\QQ$, $E\subset[0,1]^d$ is called \emph{admissible} if
\begin{itemize}
  \item Whether $(Q,E)$ has the property $\mathcal{A}$ is completely determined by the random variables $\{L_{Q'}\}$ for the sub-cubes $Q'\subset Q$ (of all generations) of $Q$.
  \item If $(Q,E)$ has property $\mathcal{A}$, then $(Q',E)$ has property $\mathcal{A}$ whenever $Q\prec_N Q'$ for some $N\in\N$.
\end{itemize}

Lemma \ref{lemma:zero-one} still holds with this weaker definition of admissibility, since the same proof applies 
as it is.

\begin{theorem}
  \label{thm:gen_E_upper_porous}
  There exists $c=c(d,M,L_o)>0$, such that a.s. $\upor(E,x)\geq c$ for all $x\in E$.
\end{theorem}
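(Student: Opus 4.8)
The plan is to mimic the proof of Theorem \ref{thm:Re_E_upper_porous} almost verbatim, replacing the explicit fractal percolation computations with qualitative arguments that only use the offspring distribution $L_o$. First I would fix $N\in\N$ (to be chosen large) and, exactly as in the homogeneous case, define for $Q\in\QQ_{n_0 N}$ the set
\[
  E^N_Q=\left\{x\in E\cap Q\,:\,\por(E,x,\sqrt{d}M^{-kN})<\tfrac12 d^{-1/2}M^{-N}\text{ for all }k\ge n_0\right\}\,,
\]
observe that $\{x\in E:\upor(E,x)<\tfrac12 d^{-1/2}M^{-N}\}\subset\bigcup_{n_0}\bigcup_{Q\in\QQ_{n_0N}}E^N_Q$, and reduce (via stochastic self-similarity) to showing $\PP(E^N=\varnothing)=1$ for $E^N:=E^N_{[0,1]^d}$. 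The geometric observation is unchanged: if $x\in E^N\cap Q$ for $Q\in\QQ_{nN}$, then \emph{all} $M^{dN}$ sub-cubes $Q'\prec_N Q$ must be surviving, for otherwise a missing sub-cube produces a porosity hole of relative size at least $\tfrac12 d^{-1/2}M^{-N}$ at scale $\sqrt{d}M^{-nN}$ (Figure \ref{fig:theorem1}). Hence, defining $F_0=[0,1]^d$ and building $F_{n+1}$ by selecting all sub-cubes $Q'\prec_N Q$ of $Q\subset F_n$ when all of them are surviving and none otherwise, we get $E^N\subset F:=\cap_n F_n$, and conditional on non-extinction $\#F_n$ is a $GW$-process with offspring law $L=M^{dN}\mathbf{1}[\widetilde L=M^{dN}]$.

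So everything reduces to showing that $\EE(L)=M^{dN}\,\PP(\widetilde L=M^{dN})<1$ for $N$ large. In the homogeneous case this was an explicit computation; here I would instead argue qualitatively. By definition $\PP(\widetilde L=M^{dN})=\PP(Z_N=M^{dN}\mid E\neq\varnothing)=(1-q)^{-1}\PP(Z_N=M^{dN})$, and $Z_N=M^{dN}$ means that \emph{every} cube of $\QQ_N$ is surviving; splitting via the tree structure, $\PP(Z_N=M^{dN})=\PP(\#E_N=M^{dN})\,(1-q)^{M^{dN}}$, since conditioned on $\#E_N=M^{dN}$ the $M^{dN}$ cubes survive independently, each with probability $1-q$. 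Moreover $\PP(\#E_N=M^{dN})\le a^{\,(M^{dN}-1)/(M^d-1)}$ where $a:=\PP(L_o=M^d)<1$: indeed $\#E_N=M^{dN}$ forces $L_{Q}=M^d$ for the root and for all $1+M^d+\dots+M^{d(N-1)}=(M^{dN}-1)/(M^d-1)$ ancestors of level $<N$, and these are i.i.d.\ events. Therefore
\[
  \EE(L)=M^{dN}\,(1-q)^{M^{dN}-1}\,\PP(\#E_N=M^{dN})\ \le\ M^{dN}\Big((1-q)\,a^{1/(M^d-1)}\Big)^{M^{dN}-1}\,,
\]
and since $(1-q)a^{1/(M^d-1)}<1$ (here the hypothesis $\PP(L_o=M^d)<1$ is exactly what we need, and $1-q\le1$), the right-hand side tends to $0$ as $N\to\infty$; pick $N$ with $\EE(L)<1$. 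By \eqref{eq:criticalp}, $F=\varnothing$ a.s., hence $E^N=\varnothing$ a.s., and $E^N_Q=\varnothing$ a.s.\ for all $Q\in\QQ$. Finally, since $\inf_{x\in E\cap Q}\upor(E\cap Q,x)<\alpha$ is admissible for each $\alpha>0$ under the generalized notion of admissibility, Lemma \ref{lemma:zero-one} (which needs $\EE(L_o)>1$, available since $E\neq\varnothing$ with positive probability) upgrades this to the a.s.\ statement $\upor(E,x)\ge c:=\tfrac12 d^{-1/2}M^{-N}$ for all $x\in E$.

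The main obstacle — really the only nontrivial point — is replacing the explicit formula $\PP(\widetilde L=M^{dN})=((1-q)p^{M^d/(M^d-1)})^{M^{dN}-1}$ by a bound valid for arbitrary $L_o$; the key idea is that the event $\#E_N=M^{dN}$ is governed by the i.i.d.\ events $\{L_Q=M^d\}$ along a finite tree of cubes, so its probability decays like $a^{\#\{\text{ancestors}\}}$ with $a=\PP(L_o=M^d)<1$, and combining with the at-most-$1$ factor $(1-q)^{M^{dN}-1}$ still beats the $M^{dN}$ multiplicity for large $N$. One should also double-check that the geometric step is insensitive to the (arbitrary, possibly dependent) way sub-cubes are placed inside a parent — it is, since the argument only uses that some sub-cube is absent, which produces a hole of the stated relative size regardless of which cube it is.
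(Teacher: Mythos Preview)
Your approach is the same as the paper's, and your explicit estimate for $\EE(L)=M^{dN}\PP(\widetilde L=M^{dN})$ is correct and in fact spells out what the paper only asserts in one line. The chain $\PP(\widetilde L=M^{dN})=(1-q)^{-1}\PP(\#E_N=M^{dN})(1-q)^{M^{dN}}$ together with $\PP(\#E_N=M^{dN})=a^{(M^{dN}-1)/(M^d-1)}$ (equality, not merely $\le$, since the $L_Q$ are i.i.d.) gives exactly the bound you wrote.

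One correction at the very end. Once you have shown $E^N_Q=\varnothing$ a.s.\ for all $Q\in\QQ$, you already have the conclusion of the theorem, namely $\upor(E,x)\ge c$ for all $x\in E$ almost surely; no appeal to Lemma~\ref{lemma:zero-one} is needed. In the homogeneous case that lemma was used to upgrade the lower bound to the stronger statement $\inf_{x\in E}\upor(E,x)=c$ for a deterministic $c$, but Theorem~\ref{thm:gen_E_upper_porous} does \emph{not} claim this. In fact your admissibility assertion is wrong in the general setting: the property $\inf_{x\in E\cap Q}\upor(E\cap Q,x)<\alpha$ depends on the geometric placement variables $X_{Q'}$, not only on the $L_{Q'}$, so it need not be admissible in the generalized sense of Section~\ref{sec:MoreGeneral}. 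The Remark immediately following Theorem~\ref{thm:gen_E_upper_porous} gives a concrete example where the almost sure infimum is not constant. So simply drop your final sentence and you are done.
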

\begin{proof}
  From $\PP(L_0=M^{d})<1$ it follows 
  that $\PP( Z_N = M^{d N} | E \neq \varnothing ) M^{d N} < 1$ when $N$ is large enough. From this observation, the claim follows as in the proof of Theorem \ref{thm:E_upper_porous}.
\end{proof}
\begin{remark}
  Unlike in the case of the fractal percolation, we cannot conclude that there exists a $c$ for which $\inf_{x \in E} \upor(E,x) = c$ a.s. on non-extinction. For example, one can create a model with $d=2$, $M=4$,
  $\PP(L = 4)=1$ and such that $\PP(E=\{0\}\times[0,1])=1/2$, $\PP(E=C_{1/4})=1/2$, where $C_{1/4}$ is a four corner Cantor set with $\inf_{x \in E} \upor(C_{1/4},x)<1/2$.
\end{remark}

Next we provide a generalization for Theorem \ref{thm:upor_dimbound}. Note, that the proof of Theorem \ref{thm:upor_dimbound} was based on the fact that $\PP(L_o=1)>0$.
Since this is not necessarily true anymore, we have to use a different argument in the proof.

\begin{theorem}
  \label{thm:gen_upor_dimbound}
  Let $\EE(L_o)>1$. Then for any $\varepsilon\in (0, \tfrac12)$ there exist $\delta =\delta(d,M,L_o,\varepsilon)>0$, such that a.s.
  \[
    \dim_H\{x\in E: \upor(E,x) < 1/2-\varepsilon\} < s -\delta.
  \]
\end{theorem}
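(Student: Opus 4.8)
The plan is to mimic the proof of Theorem~\ref{thm:Re_upor_dimbound}, constructing a sequence $F_n \subset E_{nN}$ by selecting all surviving sub-cubes $Q'\prec_N Q$ except when exactly one of them survives, and showing $E_\varepsilon \subset F$ where $E_\varepsilon$ is the set of points whose porosity stays below $\tfrac12-\varepsilon$ at the scales $\tfrac12 M^{-kN}$. The geometric containment $E_\varepsilon\subset F$ is exactly as before: if only one sub-cube $Q'\prec_N Q$ survives, then there is a porosity hole of relative size at least $\tfrac12-\sqrt d M^{-N}>\tfrac12-\varepsilon$ at the corresponding scale. Then $\#F_n$ is a $GW$-process with offspring distribution $L=\widetilde L\,\textbf 1[\widetilde L\ge 2]$, so that $\EE(L)=\EE(\widetilde L)-\PP(\widetilde L=1)$, and by \eqref{eq:dim_formula} we get $\dim_H(E_\varepsilon)\le \log\EE(L)/\log(M^N)$, with the bound $\dim_H\{x\in E:\upor(E,x)<\tfrac12-\varepsilon\}<s-\delta$ following once we know $\PP(\widetilde L=1)$ is bounded below by a positive quantity that does not decay too fast in $N$ relative to $\EE(\widetilde L)=\EE(L_o)^N$.

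**The main obstacle.** In the fractal percolation case one has the explicit formula \eqref{eq:p_k_perc}, and in particular $\PP(L_o=1)>0$ was used to guarantee $\PP(\widetilde L=1)>0$. For a general $L_o$ it can happen that $\PP(L_o=1)=0$, so $\widetilde L=1$ might itself have probability zero, and then $\EE(L)=\EE(\widetilde L)$ and no dimension drop is obtained from this construction. This is the heart of the difficulty flagged in the paragraph preceding the statement. The fix is to choose a better event to delete. The key observation is that since $\PP(L_o=M^d)<1$, for $N$ large the conditional distribution $\widetilde L$ puts a uniformly positive amount of mass away from the top value $M^{dN}$; more importantly, I would exhibit a positive-probability event, occurring among the first few generations inside a typical surviving cube, after which the point $x\in E_\varepsilon$ necessarily sees a porosity hole. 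Concretely, fix a level $a$ and a configuration of surviving sub-cubes at level $a$ inside $Q$ that forces a ball of relative radius $>\tfrac12-\varepsilon$ disjoint from $E$ at scale $\tfrac12 M^{-nN}$ (e.g. all survivors concentrated in a single level-$a$ sub-cube sitting away from the centre); such configurations have probability bounded below uniformly by some $c_0=c_0(d,M,L_o,\varepsilon)>0$ because $\PP(L_o=M^d)<1$ and $\EE(L_o)>1$ forces a positive probability for $\#E_a$ to take a value strictly between $1$ and $M^{ad}$ distributed in a prescribed favourable way. Deleting precisely this event from the selection for $F_{n+1}$ then removes a definite proportion of the offspring, giving $\EE(L)\le (1-c_0)\EE(\widetilde L)=(1-c_0)\EE(L_o)^N$, hence $\dim_H(E_\varepsilon)\le s-\delta$ with $\delta=-\log(1-c_0)/(N\log M)>0$.

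**Organising the argument.** First I would fix $N=N(\varepsilon)$ with $\sqrt d M^{-N}<\varepsilon\le \sqrt d M^{-N+1}$ and the auxiliary parameter $a$, and make precise the statement ``there is a uniformly positive-probability family $\mathcal G$ of survival patterns inside a cube $Q\subset E_{nN}$ which forces a porosity hole of relative size $>\tfrac12-\varepsilon$ at scale $\tfrac12 M^{-nN}$ centred in $Q$'', using $\PP(L_o=M^d)<1$, $\EE(L_o)>1$, and choosing $a$ large. Then I would define $F_{n+1}$ by selecting all surviving sub-cubes $Q'\prec_N Q$ unless the survival pattern in $Q$ lies in $\mathcal G$, in which case none are selected; note that the decision depends only on the $L_{Q'}$ for $Q'\subset Q$, so the construction fits the general framework of Section~\ref{sub:rf} and $\#F_n$ is a $GW$-process. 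Next, the geometric containment $E_\varepsilon\subset F$: any $x\in E\setminus F$ has some ancestor cube $Q\in\mathcal Q_{nN}$ whose survival pattern is in $\mathcal G$, whence $\por(E,x,\tfrac12 M^{-nN})>\tfrac12-\varepsilon$ and $x\notin E_\varepsilon$. Then the dimension estimate via \eqref{eq:dim_formula} as above, giving the bound for $E_\varepsilon=E_{\varepsilon,[0,1]^d}$ and, by the usual self-similarity argument, for $E_{\varepsilon,Q}$ for all dyadic $Q$; finally a countable union over $n_0$ and $Q\in\mathcal Q_{n_0N}$ yields the claimed bound on $\dim_H\{x\in E:\upor(E,x)<\tfrac12-\varepsilon\}$. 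The only genuinely new ingredient compared with Theorem~\ref{thm:Re_upor_dimbound} is the construction of the forcing family $\mathcal G$ with uniformly positive probability, replacing the single-survivor event that was available for free in the homogeneous case.
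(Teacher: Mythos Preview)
Your overall strategy and your diagnosis of the obstacle are both correct: when $q=0$ and $\PP(L_o=1)=0$ the single-survivor event used in Theorem~\ref{thm:Re_upor_dimbound} has probability zero, so something else is needed. However, the fix you sketch has a real gap.

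The crux is your claim that ``the decision depends only on the $L_{Q'}$ for $Q'\subset Q$, so $\#F_n$ is a $GW$-process''. In the framework of Section~\ref{sub:rf} only the \emph{counts} $L_{Q'}$ are i.i.d.; the placements $X_{Q'}$ need not be identically distributed nor independent across cubes. Your forcing family $\mathcal G$ is a \emph{geometric} configuration (``survivors concentrated in a single level-$a$ sub-cube sitting away from the centre'', or ``distributed in a prescribed favourable way''), so membership in $\mathcal G$ depends on the $X_{Q'}$'s, not just the $L_{Q'}$'s. Hence the number of selected sub-cubes is not i.i.d.\ across parent cubes, and $\#F_n$ need not be a $GW$-process at all. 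If instead you try to make $\mathcal G$ a pure tree event, the natural candidate $\{Z_a=1\}$ has probability zero precisely in the problematic case $q=0$, $\PP(L_o=1)=0$; and a tree event like $\{Z_a=k\}$ with $k\ge 2$ says nothing about where the $k$ survivors sit, so it cannot force a porosity hole visible from \emph{every} $x\in E\cap Q$. (As a side remark, your bound $\EE(L)\le(1-c_0)\EE(\widetilde L)$ would also be wrong: deleting all survivors on $\mathcal G$ gives $\EE(L)=\EE(\widetilde L)-\EE(\widetilde L\,\mathbf 1[\mathcal G])$, which is an additive drop, not multiplicative.)

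The paper's device resolves this neatly. It uses the tree event ``not all first-level sub-cubes $Q''\prec_1 Q$ survive'' (probability $\PP(0<L_o<M^d)>0$ after passing to a two-step process if necessary). On this event there is a missing first-level sub-cube, hence an empty ball of radius $\tfrac12 M^{-nN-1}$ somewhere in $Q$; but rather than discarding all survivors, the paper discards exactly \emph{one} surviving cube $Q'\prec_N Q$ that touches this ball (one always exists). The key point is the separation of roles: \emph{whether} a cube is removed is decided by the tree (so the count removed, $0$ or $1$, is a function of the $L_{Q'}$'s and hence i.i.d.), while \emph{which} cube is removed is decided geometrically (so that its points genuinely see a porosity hole of relative size $>\tfrac12-\varepsilon$). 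This gives $\EE(L)=\EE(\widetilde L)-\PP(0<L_o<M^d)$ and the rest of the argument is exactly as you outlined. The missing idea in your proposal is precisely this ``remove one, chosen geometrically'' trick in place of ``remove all on a geometric event''.
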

\begin{proof}
  We may assume that $\PP(0<L_o<M^d)>0$. If this is not true for the original process, it will be true for the process obtained by executing two steps at once.

  Let $N\in\N$ such that $\sqrt{d} M^{-N}<\varepsilon$. We say that a surviving cube $Q'\in\QQ_{(n+1)N}$ meets a porosity hole (see Figure \ref{fig:porosityHole}), if the following holds: Denoting by $Q$ the ancestor of $Q'$ in $\QQ_{nN}$ (that is $Q'\prec_NQ$) there is a cube $Q''\prec_1 Q$ a point $y\in Q''$ and $r\ge\tfrac12 M^{-Nn-1}$ such that $B(y,r)\cap Q'\neq\varnothing$ and $U(y,r)\cap E_{Nn+1}=\varnothing$. Here $U(y,r)$ is the open ball with center $x$ and radius $r$. Note that 
  \begin{equation}\label{eq:porsadas}
    \por\left(E,x,M^{-Nn-1}+\sqrt{d} M^{-(n+1)N}\right)\ge\frac1{2(1+\sqrt{d} M^{-N})}>\frac12-\varepsilon
  \end{equation} 
  for all $x\in Q'$, if $Q'\in\QQ_{N(n+1)}$ meets a porosity hole.

  \begin{figure}[h]
    \begin{centering}
      \begin{tikzpicture}
        \draw (0,0) rectangle (8,8) node[near start,left] {$Q$};
        \draw[black!70] (4,4) rectangle (8,8) node[near end,text=black!70] {$Q''$};
        \draw (6,5) circle (2.2);
        \node[below] at (6,5) {$B(y,r)$};
        \draw (6,5) -- ++(30:2.2);
        \draw (6,5)++(-115:2.2) rectangle ++(-1,-1) node[midway] {$Q'$};
      \end{tikzpicture}
    \end{centering}
    \caption{Cube $Q'$ meets a porosity hole. Note that it does not have to be
    a neighboring cube to $Q''$.}
    \label{fig:porosityHole}
  \end{figure}
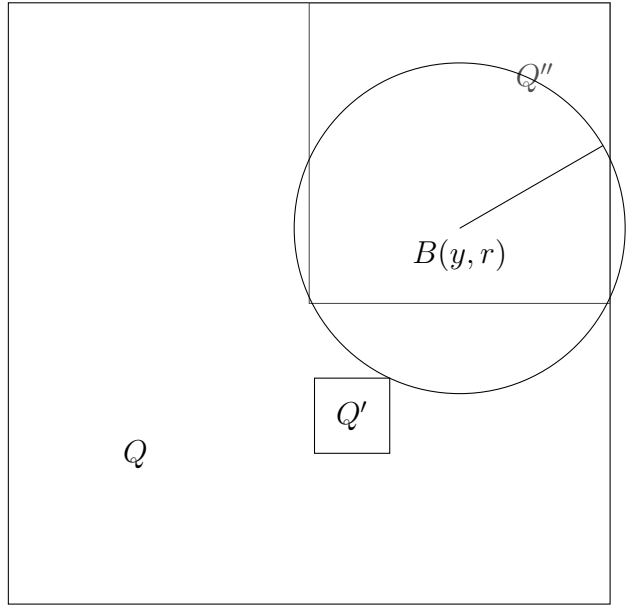


  Now we follow the argument of the proof of Theorem \ref{thm:upor_dimbound} and 
  construct a random sequence $F_n\subset E_{nN}\subset [0,1]^d$ as follows: 
  Let $F_0=E_0=[0,1]^d$. Suppose $Q\subset\mathcal {Q}_{Nn}$, $Q\subset F_n$. 
  To construct $F_{n+1}$, we select
  \begin{itemize}
    \item all the surviving cubes $Q'\prec_N Q$ if all or none $Q''\prec_1 Q$ are surviving.
    \item otherwise we select all surviving cubes $Q'\prec_N Q$ except one which meets a porosity hole.
  \end{itemize}
  Let $F_{n+1}$ be the union of all the selected cubes  $Q\in\mathcal{Q}_{(n+1)N}$. 

  Note that if $Q\in\QQ_{nN}$ is surviving and not all $Q''\prec_1 Q$ are surviving, then the family of $Q'\prec_N Q$ meeting a porosity hole is nonempty (See Figure
  \ref{fig:porosityHole}), and thus we can indeed remove one of them. It does not really matter which one we remove, but to make the process well defined, we could e.g. remove the first of them in a suitable lexiographic ordering of the sub-cubes. 

  Now, conditional on $E\neq\varnothing$, the number of cubes in $\mathcal{Q}_{nN}$ forming $F_n$ gives 
  rise to a $GW$-process with offspring distribution $L$ such that
  \[
    \EE(L)= \EE(\widetilde{L})-\PP\left(0<L_o<M^d\right)
  \]
  Since $\PP(0<L_o<M^{d})>0$, it is obvious that 
  $\EE(L)<\EE(\widetilde{L})$. Thus, from \eqref{eq:dim_formula}, we have that almost surely,
  \[\dim_H(F)=\max\left\{0,\frac{\log\EE(L)}{\log M^N}\right\}=\frac{\log\EE(\widetilde{L})}{N\log M}-\delta=s-\delta\,,\]
  for some $\delta=\delta(\varepsilon)>0$.

  Finally, recalling \eqref{eq:porsadas}, we note that $E_\varepsilon\subset F$, for 
  \[E_\varepsilon=\{x\in E\,:\,\por(E,x,r)<\tfrac12-\varepsilon\text{ for all }0<r<1\}\,.\]
  so we have actually shown that $\dim_H(E_\varepsilon)<s-\delta$ almost surely. By a similar reasoning, we see that a.s. $\dim_H (E_{\varepsilon,Q})\le s-\delta$, whenever $Q\in\QQ_n$ and
  \[E_{\varepsilon,Q}=\{x\in E\cap Q\,:\,\por(E,x,r)<\tfrac12-\varepsilon\text{ for all }0<r<M^{-n}\}\]
  and so $\dim_H\{x\in E: \upor(E,x) < 1/2-\varepsilon\} \leq s -\delta$. 
\end{proof}

\begin{theorem}
  \label{thm:gen_lpor_dimbound}
  Let $\EE(L_o)>1$ and $\PP(L_0=M^d)>0$. 
  Then for any $\varepsilon>0$ there exist $\delta=\delta(d,M,L_o,\varepsilon)>0$, such that a.s
  \[
    \dim_H\{x\in E: \lpor(E,x) > \varepsilon\} < s -\delta.
  \]
\end{theorem}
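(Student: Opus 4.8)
The plan is to mimic the proof of Theorem~\ref{thm:Re_lpor_dimubound} (the homogeneous case), where the key mechanism was: if at scale $\tfrac13M^{-nN}$ around a point $x$ the porosity is forced to be small (below $\varepsilon$), then the central region around $x$ must be ``full'' in the sense that all sub-cubes survive; we then build a subset $F=\bigcap F_n$ of $E$ by removing, at every such full configuration, the center cube, so that $E_\varepsilon\subset F$. The new difficulty is that in the general $L_o$ setting we can no longer assert $\PP(\widetilde L=1)>0$ or use the explicit binomial formula \eqref{eq:p_k_perc}; instead we rely only on the hypotheses $\EE(L_o)>1$ and $\PP(L_o=M^d)>0$. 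The point of the latter assumption is precisely that it guarantees $\PP(\widetilde L = M^{dN})>0$ for every $N$: since $\PP(L_o=M^d)>0$, with positive probability the first $N$ generations of the tree are the full $M^d$-ary tree and none of the $M^{dN}$ resulting cubes becomes extinct, and moreover conditioning on non-extinction only inflates this. Hence removing one cube whenever $\widetilde L=M^{dN}$ gives a genuine decrease in expectation, $\EE(L)=\EE(\widetilde L)-\PP(\widetilde L=M^{dN})<\EE(\widetilde L)=\EE(L_o)^N$, and then \eqref{eq:dim_formula} yields $\dim_H F = \max\{0,\log\EE(L)/\log M^N\} = s-\delta$ for some $\delta=\delta(d,M,L_o,\varepsilon)>0$ (with $F=\varnothing$ a.s. in the degenerate case $\EE(L)\le 1$).

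Concretely, I would first fix $N=N(\varepsilon)\in\N$ with $3\sqrt d\,M^{-N}<\varepsilon$, and for $Q\in\QQ_{n_0N}$ set
\[
  E_{\varepsilon,Q}=\Bigl\{x\in E\cap Q\,:\,\por\bigl(E,x,\tfrac13 M^{-kN}\bigr)>3\sqrt d\,M^{-N}\text{ for all }k\ge n_0\Bigr\},
\]
so that $\{x\in E:\lpor(E,x)>\varepsilon\}\subset\bigcup_{n_0}\bigcup_{Q\in\QQ_{n_0N}}E_{\varepsilon,Q}$ and it suffices to bound $\dim_H E_\varepsilon$ for $E_\varepsilon=E_{\varepsilon,[0,1]^d}$. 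Then I construct $F_n\subset E_{nN}$ exactly as in Theorem~\ref{thm:Re_lpor_dimubound}: $F_0=[0,1]^d$; for $Q\subset F_n$, $Q\in\QQ_{nN}$, if the surviving sub-cubes $Q'\prec_N Q$ number strictly fewer than $M^{Nd}$ we keep them all, and if all $M^{Nd}$ survive we keep all but the center cube (with the same convention for $M$ even). Conditional on $E\neq\varnothing$, $\#F_n$ is a $GW$-process with offspring law $L=\widetilde L\cdot\mathbf 1[\widetilde L\neq M^{dN}]+(M^{dN}-1)\cdot\mathbf 1[\widetilde L=M^{dN}]$, so $\EE(L)=\EE(\widetilde L)-\PP(\widetilde L=M^{dN})$. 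The containment $E_\varepsilon\subset F$ is the same geometric observation: if $x\in E\setminus F$ then $x$ lies in a center cube $Q'\prec_N Q$ of some $Q\in\QQ_{nN}$ all of whose $\QQ_{(n+1)N}$-sub-cubes survive, and then $\por(E,x,\tfrac13M^{-nN})\le 3\sqrt d\,M^{-N}<\varepsilon$, so $x\notin E_\varepsilon$. Finally one passes from $E_\varepsilon$ to the full exceptional set via the countable union and, as in the proof of Theorem~\ref{thm:Re_asDim_upor_ubound}, via Lemma~\ref{lemma:zero-one} applied to the admissible property ``$\dim_H\{x\in E\cap\interior Q:\lpor(E,x)>\varepsilon\}\le s-\delta$'' to upgrade the bound to hold a.s.

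The main point requiring care — the one genuinely new ingredient compared with the homogeneous proof — is establishing $\PP(\widetilde L=M^{dN})>0$ under only $\PP(L_o=M^d)>0$, which is what makes $\delta>0$. I would argue: $\PP(Z_N=M^{dN})\ge\PP(\#E_N=M^{dN}\text{ and every }Q\in\QQ_N\text{ with }Q\subset E_N\text{ is surviving})$; the event $\{\#E_N=M^{dN}\}$ has probability $\PP(L_o=M^d)^{1+M^d+\dots+M^{d(N-1)}}>0$, and conditional on it each of the $M^{dN}$ cubes survives independently with probability $1-q>0$, so $\PP(Z_N=M^{dN})\ge\PP(L_o=M^d)^{(M^{dN}-1)/(M^d-1)}(1-q)^{M^{dN}}>0$, and dividing by $\PP(E\neq\varnothing)=1-q$ keeps it positive. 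Everything else is a routine transcription of the homogeneous argument, so I expect no further obstacle; the one place to be mildly careful is the $\EE(L)\le 1$ branch, where one simply notes $F=\varnothing$ a.s. by \eqref{eq:criticalp} and hence $E_\varepsilon=\varnothing$, which is even stronger than the claimed dimension bound.
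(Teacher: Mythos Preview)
Your proposal is correct and follows essentially the same route as the paper: the paper's proof simply states that one uses the same $F_n$-construction and offspring distribution $L$ as in Theorem~\ref{thm:Re_lpor_dimubound}, notes that $\PP(L_o=M^d)>0$ forces $\EE(L)<\EE(\widetilde L)$, and declares the rest identical---you have just spelled out the verification that $\PP(\widetilde L=M^{dN})>0$ in more detail. The only superfluous step is your invocation of Lemma~\ref{lemma:zero-one} at the end: the countable union over the $E_{\varepsilon,Q}$ (each handled by the same $GW$ argument) already gives the a.s.\ upper bound, and no zero--one law is needed for upper bounds.
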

\begin{proof}
  As in the proof of Theorem  \ref{thm:lpor_dimbound}, we need to consider the $GW$-process with offspring distribution 
  \[
    L=\begin{cases} \widetilde{L}, &\textrm{ when } \widetilde{L} \neq M^{dN}, \\
      M^{dN}-1, &\textrm{ when } \widetilde{L} = M^{dN}.
    \end{cases}
  \]
  for a suitable $N=N(\varepsilon)\in\N$. Since $\PP(L_o=M^d)>0$, it follows that $\EE(\widetilde{L})<\EE(L)$ and the claim follows exactly as in the proof of Theorem \ref{thm:lpor_dimbound}.
\end{proof}

\begin{remark}
  The assumption $\PP(L_o=M^d)>0$ is clearly necessary for Theorem \ref{thm:gen_lpor_dimbound} as otherwise a.s. $\lpor(E,x)>\tfrac1{2\sqrt{d}M^2}$ for all $x\in E$.
\end{remark}

We will next transfer the results for the lower bounds on the dimension of the exceptional points for the more general setting at hand. We start with a generalization of Theorem \ref{thm:upor_lowerbound}.

\begin{theorem}
  \label{thm:gen_upor_lowerbound}
  Let $\EE(L_o)>1$ and $\PP(L_o > M^{d-1})>0$. Then for any  $\varepsilon \in ( 0, \varepsilon_0(d,M,L_o))$, there is  $\delta=\delta(d, M, L_o, \varepsilon) >0$, such that a.s on non-extinction,
  \[
    \dim_H\{x \in E: \upor(E,x) \leq 1/2-\varepsilon\} > \delta.
  \]
\end{theorem}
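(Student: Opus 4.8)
The plan is to adapt the argument used for Theorem~\ref{thm:Re_upor_lowerbound}. I would fix a large integer $N$, build a decreasing sequence of random sets $F_n\subset E_{nN}$ whose cube counts $\#F_n$ form a $GW$-process, arrange that this process is supercritical, so that $F=\bigcap_n F_n$ has $\dim_H F=\delta>0$ with positive probability, show that every $x\in F$ satisfies $\upor(E,x)\le\rho$ for some fixed $\rho=\rho(d,M,L_o)<\tfrac12$, and finally upgrade ``positive probability'' to ``a.s.\ on non-extinction'' through Lemma~\ref{lemma:zero-one}, exactly as in the proofs of Theorems~\ref{thm:Re_upor_lowerbound} and~\ref{thm:Re_lpor_lowerbound} (the event ``$\dim_H F_{Q'}\ge\delta$ for some sub-cube $Q'\subset Q$'' being admissible, and $\upor(E,x)\le\rho$ holding for all $x\in F_Q$). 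Setting $\varepsilon_0=\tfrac12-\rho$ and using that $\{x:\upor(E,x)\le\tfrac12-\varepsilon\}$ increases as $\varepsilon$ decreases then gives the statement for every $\varepsilon\in(0,\varepsilon_0)$.

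The only genuinely new ingredient is a replacement for the ``filling'' event used in Theorem~\ref{thm:Re_upor_lowerbound}, namely that with positive probability \emph{all} $M^{ad}$ level-$a$ descendants of a cube survive; this event can have probability zero when $\PP(L_o=M^d)=0$. Instead I would isolate a fixed scale $K=K(d,M,L_o)$ and a constant $c_0=c_0(d,M,L_o)>0$ so that, conditionally on $Q\subset E_{nK}$, with probability at least $c_0$ a \emph{good configuration} occurs inside $Q$: the surviving level-$(n+1)K$ descendants of $Q$ contain a sub-collection $\mathcal S$ such that, after deleting every cube of $\mathcal S$ lying within a fixed relative-width collar of $\partial Q$ or of the complement of $\bigcup\mathcal S$, the remaining family $\mathcal S'$ is nonempty and \emph{$\rho$-blocks porosity}: every point of the deep interior of $Q$ sees, in every direction and at every scale comparable to $\ell(Q)$, a cube of $\bigcup\mathcal S'$ within relative distance $\rho$, and this property is reproduced under the recursive construction. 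Here the $\partial Q$-collar is what makes the property self-reproducing, while the interior collar keeps deep-interior points away from the holes of $\mathcal S$.

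Proving that such a good configuration exists with $c_0>0$ is where the hypothesis $\PP(L_o>M^{d-1})>0$ enters, and this is the step I expect to be the main obstacle. The relevant point is that $M^{d-1}$ is exactly the number of level-$1$ subcubes contained in a codimension-one $M$-adic slab, so $L_o>M^{d-1}$ forces the surviving set not to be squeezed into any single slab, in every coordinate direction. Conditioning, over $K$ generations, on this non-degeneracy occurring at every node, and combining it with the supercriticality $\EE(L_o)>1$ to control how many cubes survive, one should extract with positive probability a surviving configuration spread out enough in all $d$ directions to remain $\rho$-blocking after the collar deletion, for suitable $\rho<\tfrac12$ and $K$. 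Carrying this combinatorial and geometric verification out carefully --- in particular pinning down $\rho$ and $K$, and checking robustness under iteration --- is the technical core.

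Granting the good configuration, the construction of $F_n$ mirrors that of Theorem~\ref{thm:Re_upor_lowerbound}: put $F_0=[0,1]^d$; given $Q\in\mathcal Q_{nN}$ with $Q\subset F_n$, inspect each level-$(nN+N-K)$ cube of $E$ inside $Q$, and whenever the good configuration occurs in such a cube select the corresponding post-collar family $\mathcal S'$, selecting nothing otherwise; let $F_{n+1}$ be the union of all selected cubes. Conditional on non-extinction, $\#F_n$ is a $GW$-process, and, since the expected number of level-$(N-K)$ cubes is $\EE(L_o)^{N-K}$ (cf.\ \eqref{eq:EL}) while the good configuration occurs in each with probability $\ge c_0$ and leaves a fixed positive number of cubes, its offspring law $L$ satisfies $\EE(L)=(1-q)^{-1}\EE(\#F_1)=C(d,M,L_o)\,\EE(L_o)^{N}$ with $C>0$; because $\EE(L_o)>1$, this exceeds $1$ for $N$ large, so by \eqref{eq:dim_formula} $\dim_H F=\log\EE(L)/\log M^{N}=:\delta>0$ with positive probability. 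Finally, $x\in F$ forces $x$ to lie in the deep interior of a good-configuration cube at every scale $M^{-nN}$, so the blocking property yields $\por(E,x,r)\le\rho$ for all small $r$ and hence $\upor(E,x)\le\rho<\tfrac12$; running the construction from an arbitrary starting cube and applying Lemma~\ref{lemma:zero-one} as above completes the proof.
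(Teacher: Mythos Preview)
Your overall architecture matches the paper's: build $F=\bigcap_n F_n\subset E$ with $\#F_n$ a supercritical $GW$-process, verify $\upor(E,x)\le\rho<\tfrac12$ on $F$, and upgrade to a.s.\ on non-extinction via Lemma~\ref{lemma:zero-one}. Your expectation computation $\EE(L)=C(d,M,L_o)\,\EE(L_o)^{N}$ and the final zero--one step are fine.

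The substantive difference is exactly where you flag it. You try to adapt the collar-deletion trick of Theorem~\ref{thm:Re_upor_lowerbound}, seeking a subcollection $\mathcal S$ of surviving cubes whose post-collar remainder $\mathcal S'$ is nonempty and $\rho$-blocking. But in the general model of Section~\ref{sub:rf} only the offspring \emph{counts} $L_Q$ are i.i.d., not the selections $X_Q$; you therefore cannot condition on any particular geometric arrangement of surviving cubes. They may all cluster near $\partial Q$ or be pairwise isolated, and then your collar deletion can wipe out $\mathcal S$ entirely. So it is not clear that your ``good configuration'' occurs with positive probability, and your slab heuristic (that $L_o>M^{d-1}$ prevents the children from lying in a single hyperplane slab) does not by itself control where the cubes sit after several generations.

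The paper sidesteps this by importing a discrete conical density lemma (Lemma~\ref{lemma:conical}): there exists $N_0$ such that \emph{any} family of $(M^{d-1}+1)^{N_0}$ cubes in $\QQ_{N_0}$ contains a ``central'' cube $Q_i$ with the property that from every $x\in Q_i$ and every direction $\theta\in S^{d-1}$ some other cube $Q_j$ lies in the cone $H(x,\theta,\tfrac45)$ at distance $>M^{-N_0}$. This is precisely the blocking property you want, and it needs no information on the location of the cubes, only on their number. The hypothesis $\PP(L_o>M^{d-1})>0$ then enters solely to ensure that with positive probability there are at least $(M^{d-1}+1)^{N_0}$ surviving cubes in $\QQ_{N_0}$. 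In the construction of $F_{n+1}$ one selects, for each surviving $Q'\prec_{N-N_0}Q$ with enough surviving $N_0$-descendants, the \emph{single} central cube from the lemma rather than a collar-deleted family; the geometric fact \eqref{geometric_fact} then converts the conical condition into $\por(E,x,r)\le\tfrac12-\varepsilon$ at all scales. So your outline is sound, but the conical density lemma is the missing key that replaces your unproved good-configuration step.
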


In the proof of Theorem \ref{thm:gen_upor_lowerbound}, we will make use of the following discrete conical density lemma. For $x\in\R^d$, $\theta\in S^{d-1}$ and $0<\alpha<1$, we denote
\[H(x,\theta,\alpha)=\{y\in\R^d\,:\,(y-x)\cdot\theta>\alpha|y-x|\}\]
the cone at $x$ into the direction $\theta$ with opening angle $\arccos\alpha\in(0,\pi/2)$. The following lemma is a special case of \cite[Lemma 3.2]{SahlstenShmerkinSuomala2013}.  The lemma holds for any $0<\alpha<1$ although we state it here only for the value $\alpha=\tfrac45$. We note that the condition $\dist(Q_i,Q_j)>M^{-N_0}$ is not included in the statement of the lemma in \cite{SahlstenShmerkinSuomala2013} but it follows directly from the proof. Further, in \cite{SahlstenShmerkinSuomala2013} the lemma is stated for $M=2$ but the proof in the general case is the same.

\begin{lemma}\label{lemma:conical}
  There is $N_0\in\N$ with the property that in any disjoint collection of cubes $\{Q_i\}_{i=1,\ldots,(M^{d-1}+1)^{N_0}}\subset\QQ_{N_0}$, there is a cube $Q_i$ such that for any $x\in Q_i$ and $\theta\in S^{d-1}$, there is $Q_j$ with 
  \begin{equation}\label{good_cubes}
    \dist(Q_j,Q_i)>M^{-N_0}\text{ and }Q_j\subset H(x,\theta,\tfrac45)\,.
  \end{equation}
\end{lemma}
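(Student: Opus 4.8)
The plan is to deduce the statement from \cite[Lemma 3.2]{SahlstenShmerkinSuomala2013}: our lemma is exactly the case $\alpha=\tfrac45$ of that result, the passage from $M=2$ to general $M$ requiring no change in the proof given there, and the extra conclusion $\dist(Q_i,Q_j)>M^{-N_0}$ being visible in (not just consistent with) that proof. For the sake of exposition, here is the shape of the argument one would otherwise run, which is also the shape of the argument in \emph{loc.\ cit.}

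We would argue by contraposition: assume $\{Q_i\}\subset\QQ_{N_0}$ is disjoint and contains \emph{no} good cube, i.e.\ for each $i$ there are $x_i\in Q_i$ and $\theta_i\in S^{d-1}$ such that no $Q_j$ satisfies both $\dist(Q_j,Q_i)>M^{-N_0}$ and $Q_j\subset H(x_i,\theta_i,\tfrac45)$, and we would deduce that $\#\{Q_i\}<(M^{d-1}+1)^{N_0}$. Two preliminary observations make this manageable. First, a cube at distance at most $M^{-N_0+C}$ from $Q_i$ (for a suitable $C=C(d)$) is too close to fit inside the narrow cone $H(x,\theta,\tfrac45)$ at all, and moving the apex within the tiny cube $Q_i$ perturbs the direction to any cube at distance $\ge M^{-N_0+C}$ by only $O(M^{-C})$; hence the quantifier ``for all $x\in Q_i$, for all $\theta$'' may be replaced by a fixed apex and a finite net of directions at the price of slightly opening the cone, and the loss so incurred is precisely what the ``$+1$'' in $(M^{d-1}+1)^{N_0}$ is there to absorb. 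Second, the ``no good cube'' property is self-similar: for any $M$-adic cube $R$ the collection $\{Q_i:Q_i\subset R\}$, rescaled to $[0,1]^d$, again contains no good cube, now as a subcollection of $\QQ_{N_0-|R|}$, since all the relevant distances scale by $M^{|R|}$.

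With these in hand, the core is an inductive flatness bound: if a disjoint collection $\mathcal C\subset\QQ_n$ contains no good cube, then $\#\mathcal C\le(M^{d-1}+1)^n-1$. For the inductive step one fixes the top $M$-adic subdivision into $M^d$ children; a bad direction at a point of $\mathcal C$ confines the bulk of $\mathcal C$, after rescaling, to a bounded-width neighbourhood of a hyperplane, so that the children can be organised in such a way that the induction multiplies the count by at most $M^{d-1}+1$ per scale (one ``slice'' of $\approx M^{d-1}$ children carrying the main mass, plus a bounded correction); applying the inductive hypothesis inside each child then closes the induction. Running this with $n=N_0$ gives the contraposition, and with it the lemma.

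The genuinely hard part is this flatness mechanism with honest constants: showing that a single bad direction really pins the nearby part of $\mathcal C$ into a slab (here the specific value $\alpha=\tfrac45$ is chosen to make the geometry comfortable, although any $\alpha<1$ works with worse constants), doing so uniformly over the whole sphere $S^{d-1}$, and — above all — controlling how the per-scale corrections accumulate over $N_0$ scales so that the total count stays strictly below $(M^{d-1}+1)^{N_0}$. This is exactly the content of \cite[Lemma 3.2]{SahlstenShmerkinSuomala2013}, which is why in the end we simply invoke it rather than reproduce the estimate.
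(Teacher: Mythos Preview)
Your proposal is correct and takes essentially the same approach as the paper: both simply invoke \cite[Lemma 3.2]{SahlstenShmerkinSuomala2013}, noting that the case $\alpha=\tfrac45$ is a special case, that the proof carries over verbatim from $M=2$ to general $M$, and that the extra conclusion $\dist(Q_i,Q_j)>M^{-N_0}$ is visible in that proof. Your additional sketch of the argument goes beyond what the paper provides, but the underlying justification is identical.
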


\begin{figure}[h]
  \begin{centering}
    \begin{tikzpicture}
      \foreach \i in {1,2,...,40}{
        \pgfmathsetmacro{\x}{(random(64) - 1)/8. }
        \pgfmathsetmacro{\y}{(random(64) - 1)/8. }
        \draw[fill,black!50] (\x,\y) rectangle ++(1/8,1/8);
      }

      \draw (0,0) rectangle (8,8);
      \draw[fill] (3,2.1) rectangle ++(1/8,1/8) node[midway,below] {$Q_i$};
      \draw[fill] (3.1,2.2) circle (1pt);
      \node[above] (x) at (3.1,2.2) {};
      \draw (3.1,2.2)++(50:14) -- +(210:12) node[midway,below=5pt,right]{$B(y,(1/2-\varepsilon) r)$};
      \draw (3.1,2.2)++(50:14)++(200:12) arc (200:255:12) node[above] (rr) {};
      \node[right,rotate=50] (teksti) at (x) {$H(x,\theta,4/5) $}; 
      \draw (3.1,2.2) -- +(14:3);
      \draw (3.1,2.2) -- +(86:3);
      \draw[fill] (5.75,4.125) rectangle ++(1/8,1/8) node[midway,below] {$Q_j$};
    \end{tikzpicture}
  \end{centering}
  \caption{Lemma \ref{lemma:conical} (conical density) implies that when we have more than a ``$(d-1)$-dimensional'' family
  of cubes remaining, there will be ``a central'' cube that sees cubes in every direction. Thus, for the
central cube the other cubes form a barrier for the size of a porosity hole.}
\label{fig:conicalDensity}
\end{figure}
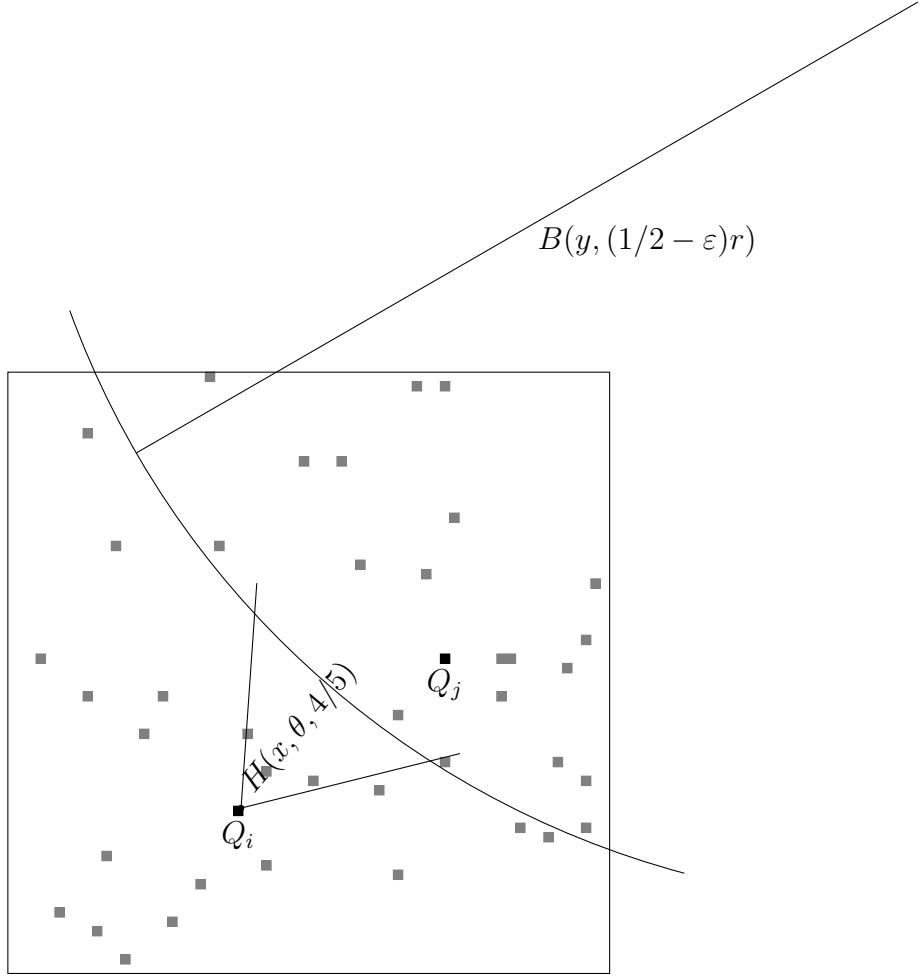

\begin{proof}[Proof of Theorem \ref{thm:gen_upor_lowerbound}]
  Let $N_0$ be the constant of Lemma \ref{lemma:conical} and
  \[p=\PP\left(\text{There are at least } (M^{d-1}+1)^{N_0}\text{ surviving cubes in }\QQ_{N_0}\right)\,.\]
  Note that $p>0$ since $\PP(L_o>M^{d-1})>0$.
  Given $N>N_0$, we construct the following subsets $F_n$ of $E_{nN}$: Let $F_0=[0,1]^d$. If $Q\in\QQ_{nN}$ is contained in $F_n$, we consider all surviving cubes $Q'\prec_{N-N_0} Q$:
  \begin{itemize}
    \item
      if $Q'$ has at least $(M^{d-1}+1)^{N_0}$ surviving sub-cubes $Q_1,\ldots, Q_{(M^{d-1}+1)^{N_0}}\prec_{N_0} Q'$, we apply Lemma \ref{lemma:conical} to this family and select one of them that satisfies the condition \eqref{good_cubes}. 
    \item otherwise, we select no sub-cubes of $Q'$. 
  \end{itemize}
  Let $F_{n+1}$ be the union of the selected cubes in $\QQ_{(n+1)N}$. As in all the previous proofs, the number of cubes forming $F_n$ gives rise to a $GW$-process and if we denote the offspring distribution by $\widetilde{L}$, then
  \[\EE(\widetilde{L})=p\EE(L_o)^{N-N_0}>1\,,\]
  provided $N$ is large enough.

  It remains to show that there is $\varepsilon=\varepsilon(N,N_0)>0$ such that $\upor(E,x)\le\tfrac12-\varepsilon$ for all $x\in F=\cap_{n\in\N}F_n$. To that end, we first recall the following geometric fact. Let $x\in\R^d, \theta \in S^{d-1}$ and $L_{x, \theta}$ be the ray from the point $x$ to the direction $\theta.$ 
  Then there exist $\varepsilon=\varepsilon(d,M,N) >0$, such that for any $B(y, (\tfrac12-\varepsilon)r)\subset B(x,r)$ with $y \in L_{x,\theta}$ and $2\sqrt{d}M^{-N}\leq r<2\sqrt{d}$, we have 
  \begin{equation}\label{geometric_fact}
    H\left(x,\theta,\tfrac45\right) \cap B\left(x,\sqrt{d} M^{-N}\right)\setminus B\left(x,M^{-2N}\right)\subset B\left(y,(\tfrac12-\varepsilon)r\right).
  \end{equation}

  Now, if $x\in F$ and  $2\sqrt{d}M^{-(n+1)N}\leq r<2\sqrt{d}M^{-nN}$ and if 
  \[B\left(y,(\tfrac12-\varepsilon)r\right)\subset B(x,r)\setminus\{x\}\,,\] then by Lemma \ref{lemma:conical} and the definition of $F_n$, there is a surviving cube $Q\in\QQ_{n(N+2)}$ with 
  \[Q\subset H\left(x,\theta,\tfrac45\right)\cap B\left(x,\sqrt{d} M^{-(n+1)N}\right)\setminus B\left(x,M^{-(n+2)N}\right)\,,\] 
  where $\theta=(y-x)/|y-x|$. Combined with \eqref{geometric_fact}, we conclude that $Q\subset B(y,(\tfrac12-\varepsilon)r)$. Whence $E\cap B(y,(\tfrac12-\varepsilon)r)\neq\varnothing$ and consequently $\por(E,x,r)\le\tfrac12-\varepsilon$. Since this holds for all $x\in F$, $0<r<1$, we have shown that with positive probability,
  \[\dimh\{x\in E\,:\,\upor(E,x)\le\tfrac12-\varepsilon\}\ge\dimh F=\delta>0\,.\]
  and the proof is finished using Lemma \ref{lemma:zero-one} as in the proof of Theorem \ref{thm:upor_lowerbound}.
\end{proof}

\begin{remark}
  Under the stronger assumption $\PP(L_o = M^d)>0$, we could use the 'barrier argument'  from the proof of Theorem \ref{thm:upor_lowerbound} instead of Lemma \ref{lemma:conical}.
  The assumption $\PP(L_o>M^{d-1})$ is necessary in the sense that if $L_o\le M^{d-1}$, then it could happen that with positive probability, $E$ is contained in a hyperplane, whence $\upor(E,x)=\tfrac12$ for all $x\in E$.        
\end{remark}

\begin{theorem}
  \label{thm:gen_lpor_lowerbound}
  Let $\EE(L_o)>1$. Then there exist $\varepsilon=\varepsilon(d,M,L_o)$, $\delta=\delta(\varepsilon)>0$, such that a.s. 
  \[
    \dim_H\{ x\in E\,:\,\lpor(E, x)> \varepsilon\} > \delta.
  \]
\end{theorem}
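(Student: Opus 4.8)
The plan is to re-run the proof of Theorem~\ref{thm:Re_lpor_lowerbound} almost word for word, the only new point being the verification that the relevant offspring distribution has mean larger than $1$. Fix $N\in\N$ and build $F_n\subset E_{nN}$ as there: $F_0=[0,1]^d$, and given $Q\in\QQ_{nN}$ with $Q\subset F_n$ we put into $F_{n+1}$ all surviving cubes $Q'\prec_N Q$ provided not all of them are surviving, and none of them otherwise. As explained at the start of Section~\ref{sec:MoreGeneral}, since $\#E_n$ is a $GW$-process with offspring $L_o$ also in the present generality, conditional on $E\neq\varnothing$ the number of cubes in $\QQ_{nN}$ forming $F_n$ is a $GW$-process whose offspring distribution is $L=\widetilde{L}\,\textbf{1}[\widetilde{L}<M^{dN}]$, so $\EE(L)=\EE(\widetilde{L})-M^{dN}\PP(\widetilde{L}=M^{dN})$.

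The key quantitative step is that this is $>1$ for $N$ large. By \eqref{eq:EL} and the hypothesis $\EE(L_o)>1$ we have $\EE(\widetilde{L})=\EE(L_o)^N\to\infty$. On the other hand, $Z_N=M^{dN}$ forces every one of the $M^{dN}$ cubes of $\QQ_N$ to lie in $E_N$, so a routine computation along the $GW$-tree (the $L_Q$ being i.i.d.) gives
\begin{align*}
  \PP(\widetilde{L}=M^{dN})=(1-q)^{-1}\PP(Z_N=M^{dN})&\le(1-q)^{-1}\PP(\#E_N=M^{dN})\\
  &=(1-q)^{-1}\PP(L_o=M^d)^{\frac{M^{dN}-1}{M^d-1}}\,.
\end{align*}
Since $\PP(L_o=M^d)<1$ by our standing assumption, the right-hand side decays (at least) doubly exponentially in $N$, hence $M^{dN}\PP(\widetilde{L}=M^{dN})\to 0$ and therefore $\EE(L)\to\infty$. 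We fix $N$ with $\EE(L)>1$; then by \eqref{eq:criticalp} and \eqref{eq:dim_formula} the limit set $F=\cap_{n\in\N}F_n$ is nonempty with positive probability, and on that event $\dim_H(F)=\log\EE(L)/\log M^N=:\delta>0$.

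The geometric half of the argument is identical to the proof of Theorem~\ref{thm:Re_lpor_lowerbound} and uses nothing about $L_o$: if $x\in F$ and $\sqrt{d}M^{-nN}\le r<\sqrt{d}M^{(1-n)N}$, then the cube $Q_n\in\QQ_{nN}$ containing $x$ was, by the construction, forced to have a child $Q'\prec_N Q_n$ that is not surviving (otherwise none of its children would have been selected and $x$ could not lie in $F_{n+1}$), and hence $\interior Q'\cap E=\varnothing$. The inscribed ball of $Q'$ is contained in $B(x,r)$ and its radius relative to $r$ exceeds $\varepsilon:=\tfrac12 d^{-1/2}M^{-2N}$, so $\por(E,x,r)\ge\varepsilon$. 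As this holds at all such scales, $\lpor(E,x)\ge\varepsilon$ for every $x\in F$, and (replacing $\varepsilon$ by $\varepsilon/2$ to make the inequality strict) with positive probability $\dim_H\{x\in E\,:\,\lpor(E,x)>\varepsilon/2\}\ge\delta$.

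It remains to upgrade this to full probability conditional on non-extinction, exactly as in Theorem~\ref{thm:Re_lpor_lowerbound}: running the same construction from $F_0=Q$ for an arbitrary $Q\in\QQ_n$ produces a limit set $F_Q$ with the same $\delta,\varepsilon$, satisfying $\PP(\dim_H(F_Q)=\delta\mid Q\subset E_n)\ge c$ for some $c>0$ independent of $Q,n$, and still $\lpor(E,x)\ge\varepsilon$ for all $x\in F_Q$; since the event ``$\dim_H(F_{Q'})\ge\delta$ for some sub-cube $Q'\subseteq Q$'' is admissible in the weaker sense used in Section~\ref{sec:MoreGeneral}, Lemma~\ref{lemma:zero-one} finishes the proof. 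I expect the only places needing genuine care to be (i) the doubly-exponential bound above, which replaces the explicit formula for $\PP(\widetilde{L}=M^{dN})$ available for fractal percolation, and (ii) confirming, as in the opening of Section~\ref{sec:MoreGeneral}, that $\#F_n$ really is a $GW$-process even though the cubes inside a parent are not placed independently; everything else is a transcription of the homogeneous proof.
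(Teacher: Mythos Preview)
Your proposal is correct and follows essentially the same approach as the paper: the paper's proof is a single sentence noting that $M^{dN}\PP(\widetilde{L}=M^{dN})\to 0$ (via an explicit formula in terms of $\PP(Z_1=M^d)$) and then deferring everything else to the proof of Theorem~\ref{thm:Re_lpor_lowerbound}. Your estimate $\PP(\widetilde{L}=M^{dN})\le(1-q)^{-1}\PP(L_o=M^d)^{(M^{dN}-1)/(M^d-1)}$ is a clean upper bound that achieves the same asymptotic, and the remaining steps you spell out (the geometric porosity bound, the upgrade via Lemma~\ref{lemma:zero-one}, the check that $\#F_n$ is a $GW$-process because the selection rule depends only on the i.i.d.\ $L_Q$'s) are exactly what the paper leaves implicit.
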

\begin{proof}
  Noting that
  \[M^{dN}\PP(\widetilde{L}=M^{Nd})=M^{dN}\PP(Z_1=M^d)^{(M^{d(N+1)}-M^d)/(M^d-1)}\longrightarrow 0\,,\]
  as $N\to\infty$, we observe that the proof of Theorem \ref{thm:lpor_lowerbound} goes through in the current setting.  
\end{proof}

\section{Further remarks}\label{sec:remarks}

\emph{a)} The proofs of Theorems \ref{thm:upor_dimbound}--\ref{thm:lpor_lowerbound} (and their generalizations in Section \ref{sec:MoreGeneral}) yield  concrete lower bounds for $\delta=\delta(d,M,p,\varepsilon)$ (resp. $\delta(d,M,L_{o},\varepsilon)$). However, these bounds are most likely very far from being optimal. It is a natural question to find the best possible values or at least the correct asymptotics as $\varepsilon\rightarrow 0$.
It is possible to modify the proofs of Theorem \ref{thm:E_upper_porous} and Proposition \ref{prop:annular} to show that $c(d,M,p) \rightarrow 1/2$ and $\kappa(d,M,p)\rightarrow 1$ as $p\rightarrow M^{-d}$. Furthermore,
$c(d,M,p) \rightarrow 0$ as $p \rightarrow 1$ by modifying the argument used in the proof of Theorem \ref{thm:upor_lowerbound}. Recall from Remark \ref{rem:annularremark} that the claim of Proposition \ref{prop:annular} fails for
$p$ close to one.

\emph{b)} In addition to porosity, the method presented in this paper 
may be used to investigate
many other geometric properties of fractal percolation and related models. For instance, we can show that if $p>M^{-d}$, then almost surely on non-extinction, all compact sets $C\subset[0,1]^d$ are $M$-adic 
tangents of $E$ at $x$ for $\mu$-almost all\footnote{Recall that $\mu$ is the natural measure defined in \eqref{eq:mudef}.} $x\in E$ in the sense that given a compact $C\subset[0,1]^d$, 
there is a sequence $n_k\to\infty$ and cubes $x\in Q_k\in\QQ_{n_k}$ such that $\lim_{k\to\infty}h_{Q_k}^{-1}(Q_k(E))=C$ in the Hausdorff metric, where $Q_k(E)$ is the collection of surviving points from $Q_k$.
Formally $Q_k(E) = \left\{ x\in E: \text{ for all } j \text{ there exists } Q \prec_j Q_k \text{ with } x \in Q \subset E_{n_k+j} \right\}$. 

Let us sketch the proof: If $C=Q_1\cup\ldots\cup Q_l$ is a finite union of cubes in $\QQ_{N_0}$, then there is a positive probability for the event
that $E\cap Q_i\neq\varnothing$ for all $i=1,\ldots,l$ and $E\cap Q=\varnothing$ for the rest of the cubes $Q\in\QQ_{N_0}$. Given $Q\in\QQ_k$, denote by $\mathcal{T}(E,Q,N_0)$ the family of cubes in $\QQ_{N_0}$ which intersect $h_{Q}^{-1}(Q( E))$ (recall that $h_Q^{-1}$ is the homothethy mapping $Q$ onto $[0,1]^d$). 
Now we can easily adapt the method from Section \ref{sec:proofs}, to show that almost surely, for all $x\in E$ except a set of dimension $s-\delta(C)<s$, there is a sequence $n_k\to\infty$ and $Q_k\in\QQ_{n_k}$ with $x\in Q_k$ and $\mathcal{T}(E,Q,N_0)=\{Q_1,\ldots,Q_l\}$. A limiting argument using that any compact set may be well approximated with finite unions of cubes then implies that for $\mu$-almost all $x\in E$, all compact sets $C\subset[0,1]$ are tangents in the required sense. Here we have used the fact that the natural measure $\mu$ is almost surely exact dimensional so that the sets of dimension $<s$ are $\mu$-null.

\emph{c)} Our method is heavily based on the use of GW-processes which is due to the nested structure of the random process and some level of stochastic self-similarity. Thus, our method does not work for continuous analogs of the fractal percolation models such as Poissonian cut-out sets and its many variations (see e.g. \cite{Mandelbrot1971, Zahle1984, Falconer2003, NacuWerner2011}). To give a concrete example, let $\Gamma$ be a Poisson point process on $\R^d\times(0,1)$ with intensity $\gamma\mathcal{L}^d\times\tfrac{dr}{r^{d+1}}$ and let $A=B(0,1)\setminus\cup_{(x,r)\in\Gamma}B(x,r)$. For such a random set, it is still natural to conjecture that the claims of Theorems \ref{thm:E_upper_porous}--\ref{thm:lpor_lowerbound} remain valid almost surely on $A\neq\varnothing$, provided $\gamma$ is on the range where the expected dimension is positive.

\emph{d)} Our result suggest that the a.s. quantities $\beta_u(\alpha)$, $\beta_l(\alpha)$ for $0<\alpha<\tfrac12$ possess multifractal behaviour. As mentioned in Remark \ref{rem:porinK}, it would be interesting to know, if the sub- and super level sets in the definition of $\beta_u$, $\beta_l$ could be replaced by the actual level sets $\{x\in E\,:\, \upor(E,x)=\alpha\}$, $\{x\in E\,:\, \lpor(E,x)=\alpha\}$ and if this is the case, analyze the analytical properties of the functions $\alpha\mapsto\beta_u(\alpha)$, $\alpha\mapsto\beta_l(\alpha)$. Note that it follows from Lemma \ref{lemma:zero-one} that
conditional on non-extinction, $\{x\in E\,:\, \upor(E,x)=\alpha\}\neq \emptyset$ has probability either zero or one.

\emph{e) }
In 
Theorems \ref{thm:upor_dimbound}, \ref{thm:lpor_dimbound}, Theorem \ref{thm:gen_upor_dimbound} and Theorem \ref{thm:gen_lpor_dimbound}, the upper bounds obtained for Hausdorff dimension hold. a.s for the packing dimension as well. 
This observation is based on the dimension formula \eqref{eq:boundarydim} (which is valid for Hausdorff, packing- and box-counting dimension) and the countable stability of packing dimension. However we don't know if these 
exceptional sets have exactly the same Hausdorff dimension and packing dimension. Regarding box-counting dimension, it is easy to see that for fractal percolation, if the exceptional 
sets ($\{\upor(E,x)<\alpha\}$ or $\{\lpor(E,x)>\alpha\}$) are non-empty, then they are a.s. dense and whence of full box-counting dimension.

\bibliographystyle{plain}
\bibliography{bibliography}
\end{document}